\newcommand{\N}{\mathbb{N}} % It denotes the set of non-negative integers
\DeclareMathOperator{\supp}{supp}
\DeclareMathOperator{\sign}{sign}
\newtheorem{theorem}{Theorem}[section]
\newtheorem{lemma}[theorem]{Lemma}
\newtheorem{proposition}[theorem]{Proposition}
\newtheorem {definition}[theorem]{Definition}
\newtheorem {example}[theorem]{Example}
\newtheorem {corollary}[theorem]{Corollary}
\newtheorem {remark}[theorem]{Remark}
\newcommand{\HOX}[1]{}
\def\R{{\mathbb R}}
\def\Z{{\mathbb Z}}
\def\expec{{\mathbb E}}
\def\vj{{\vec{j}}}
\newcommand{\norm}[1]{\left\|#1\right\|}
\def\hat{\widehat}
\def\tilde{\widetilde}
\def\id{{\rm I}}
\title[Inverse scattering for a random potential]{Inverse scattering for a random potential}
\author[Pedro Caro, Tapio Helin and Matti Lassas] {Pedro Caro$^*$, Tapio Helin$^\circ$ and Matti Lassas$^\circ$}
\date{\noindent \today}
\thanks{\noindent $^*$ Ikerbasque \& BCAM - Basque Center for Applied Mathematics. \\
\indent $^\circ$ Department of Mathematics and Statistics, University of Helsinki. }
\begin{document}

\begin{abstract}
In this paper we consider an inverse problem for the $n$-dimensional random Schr\"{o}dinger equation $(\Delta-q+k^2)u = 0$. 
We study the scattering of plane waves in the presence of a potential $q$ which is assumed to be a Gaussian random function such that its covariance is described by a pseudodifferential operator.
Our main result is as follows: given the backscattered far field, obtained from a single realization of the random potential $q$, we uniquely determine the principal symbol of the covariance operator of $q$. Especially, for $n=3$ this result is obtained for the full non-linear inverse backscattering problem. 
Finally, we present a physical scaling regime where the method is of practical importance.
\end{abstract}

\keywords{Inverse scattering problem, random potential, pseudo-differential operators, statistical stability}

\maketitle

\tableofcontents
\section{Introduction}

In inverse scattering theory one aims at determining an electric potential $q$ in $\R^n$ with $n\geq 2$ from measurements describing how this scatters certain incoming waves. {In many applications, the scatterer is so rough and vastly complicated that there is an apparent lack of systematic patterns in its micro-scale structure. In these situations, the potential is assumed to be created by a physical random process and the goal is not any more to recover the full potential but to determine some parameters or functions describing properties of its micro-structure.}
In this paper, we are interested in reconstructing statistical properties, more precisely, \textit{the local strength} of a potential $q$.

In the usual mathematical approach to the inverse scattering theory, one considers the scattering problem
\begin{equation}
\label{eq:intro_direct}
\left\{
\begin{aligned}
& (\Delta - q(x) + k^2) u(x) = 0 \quad\quad\quad {\rm in}\; \R^n\\
&u (x) = e^{ik\theta \cdot x} + u_{sc} (x)\\
&u_{sc}(x) \;\;  \textrm{satisfies the Sommerfeld radiation condition,}
\end{aligned}
\right.
\end{equation}
where the \textit{incident wave} is assumed to be the plane wave $e^{ik\theta \cdot x}$ and the \textit{scattered} and \textit{total} waves are denoted by $u_{sc}$ and $u$, respectively. The scattered wave satisfies the following asymptotic expansion
\begin{equation*}
	u_{sc}(x)  = c_n k^{\frac{n-1}2} |x|^{-\frac{n-1}{2}}e^{ik|x|} u^\infty\left(k, \theta, \frac{x}{|x|}\right) + o\left(|x|^{-\frac{n-1}{2}}\right),
\end{equation*}
where $u^\infty$ is known as the {\it far-field pattern} of $u_{sc}$. In this context, the inverse backscattering problem aims at answering the question:
\begin{itemize}
	\item[(Q)]\sl{Given the backscattered far-field pattern $u^\infty(k,\theta,-\theta)$ for multiple values of $k>0$ and $\theta \in {\mathbb S}^{n-1}$, what kind of information of $q$ can be recovered?}
\end{itemize}

The deterministic inverse back-scattering problem---which asks whether a potential $q(x)$ can be uniquely 
determined from its  backscattered far-field pattern $u^\infty(k,\theta,-\theta)$---is a longstanding
open problem. At this moment, the problem has been solved only under assumptions on controlled angular regularity  of the potential (see
\cite{MR3224125}). We discuss below the literature about this problem.
In this paper we consider 
a related stochastic   inverse problem where the statistical parameter functions of the potential $q$
are determined from the observations.
 
Since we are interested in the situations where the scatterer presents a random behaviour, we need to rephrase our approach to the inverse scattering theory. In order to do so, we assume the potential $\omega \in \Omega \mapsto q(x,\omega)$ to be a generalized random function in a probability space $(\Omega,\mathcal{H}, \mathbb{P})$. This makes the far-field pattern be random as well, which means that it changes with each realization $q(x,\omega)$. However, our approach consists of assuming the backscattering data $u^\infty(k,\theta,-\theta)$ with $k>0$ and $\theta \in {\mathbb S}^{n-1}$ to be generated by a single realization $q(x,\omega_0)$ for certain $\omega_0 \in \Omega$. Then, the inverse backscattering problem in this context asks to determine the parameters characterizing the probability law of $q$ from the backscattering data.

As we previously advanced, we reconstruct in this paper the local strength of the potential, which is one of the parameters describing the probability law of $q$. In order to provide an interpretation of this parameter, we will need to make some general assumptions on $q$. Firstly, we assume $q$ to be a generalized Gaussian field supported in a bounded domain $D$ and its expected potential $\mathbb{E} q$ to be a smooth function. Additionally, we assume the covariance function $K_q(x,y)$ to be smooth out of the diagonal, which means that the long distance interactions depends smoothly on their locations; we also assume the average roughness (or smoothness) of $q$ to remain unchanged for every sub-domain of $D$. However, we allow the size of this roughness to change in different sub-domains of $D$. The local strength of the potential measures or controls these different sizes. These assumptions\footnote{The random model is discussed in detail in the section \ref{sec:isotropic_rf}.} can be rigorously introduced, assuming that the covariance operator $C_q$ is a classical pseudodifferential operator (see for example \cite{Hor3}) of order $-m$ with $m > n-1$,% \HOX{We need to add citation to Hormander volume 3.}
and such that, $C_q$ has
\begin{equation}
	\label{eq:intro_principal_symbol}
	\sigma(x, \xi) = \frac{\mu(x)}{|\xi|^m}
\end{equation}
as a principal symbol, with $\mu$ a smooth non-negative function supported on $D$---called the \textit{the local strength} of the potential. As we will see in Definition \ref{def:ml_iso}, this is to say that $q$ is a Gaussian \textit{microlocally isotropic} random field. Eventually, assuming $q$ as above with $\mu$ unknown, our goal will be to reconstruct $\mu$ from the backscattering data. As we will see in the section \ref{sec:regularity}, $\mu$ yields valuable control on the oscillations of $q$: where $\mu$ is large, the rough oscillations of $q$ are most likely large as well. 

In order to cover a broad spectrum of well-known random field models, we also include the possibility of realizations of $q$ being generalized functions almost surely (a.s. for short). In fact, whenever $n-1 < m \leq n$, we can only ensure that $q$ belongs to a Sobolev space with negative smoothness index almost surely. This consideration requires a carefully analysis\footnote{The readers who are expert on uniqueness for the Calder\'on problem for non-regular conductivities will notice the connection with the work \cite{MR2026763} of Brown and Torres.} of the forward problem with compactly supported potentials in the Sobolev spaces $L^p_{-s}(\R^n)=W^{-s,p}(\R^n)$ %\HOX{Alternative notations are added, $L^p_{-s}(\R^W)=H^{-s,p}(\R^n)$.}
 with $0 < s \leq 1/2$ and $n/s \leq p < \infty$. Inspired by the works \cite{MR0466902, MR1230709}, we provide new insights to the classical scattering theory for rough potentials.
 
{A {microlocally isotropic} Gaussian random field $q$  of order $-m$  in $D\subset \R^n$ can be written
in the form $q=(C_q)^{1/2}W$,  where $W$  is a white noise. We will later see that $q \in L^p_{-s} (\R^n)$ almost surely for any $1 < p < \infty$ and $-s < (m - n)/2$.  The local strength $\mu$  
determines the roughest component of $q$  in the sense that if
$C_{\tilde q}$  is a properly classical pseudodifferential operator of order $-m$ having the
same principal symbol (\ref{eq:intro_principal_symbol}) as the operator $C_q$, then $\tilde q=(C_{\tilde q})^{1/2}W$
is also a {microlocally isotropic} Gaussian random field  of order $-m$ such that
$\tilde q-q\in L^p_{-s+1} (\R^n)$ almost surely for any $1 < p < \infty$ and $-s < (m - n)/2$,
that is, $\tilde q-q$  is one degree smoother than $q$.}

In applications the measurement data is often obtained as an average of signals at multiple frequencies. Also, in many standard references in the literature in applied sciences,
see e.g. \cite{ishimaru1978wave,Kong,PB,SH}, one  considers the effective equations for the expectations and covariances of the scattered waves. This means that one considers the averages of waves that are 
%this aspect is typically simplified by assuming, that the data is 
generated by many independent samples of the scatterers. This approach can be poorly justified if the scatterer changes slowly during the measurements or is independent of time.
%Instead, measurements at different frequencies and especially correlations in data between frequencies
%may provide valuable information regarding the scatterer.
In this paper, the data is assumed to be a weighted average of far-field patterns at a given separation $\tau>0$: 
\begin{equation}
	\label{eq:intro_measurement_data}
	M(\tau, \theta) = \lim_{K\to\infty} \frac{1}{K}\int_K^{2K} k^m u^\infty(k,\theta,-\theta) \overline{u^\infty(k+\tau,\theta,-\theta)} dk,
\end{equation}
where $\theta \in \mathbb{S}^{n-1}$. As we pointed out before, the far-field pattern is random and consequently our data is random as well. Since we want to show that the data generated by a single realization of $q$ allows us to reconstruct {the local strength }$\mu$---which is non-random, we will need to prove that the randomness averages out at the limit. Actually, we will prove that, for $n=m=3$, there exists a known constant $c>0$ such that
\begin{equation}
M(\tau, \theta) = c\, \widehat{\mu} (2\tau\theta)\quad \text{almost surely}.
\label{id:linear_relation}
\end{equation}
Formula (\ref{id:linear_relation}) means that the measurement function $M(\tau, \theta)$, computed from the measured far-field patterns, does not depend (with probability one) on the realization of the random potential $q$. Such measurement functions that are independent of the realization of the random media are said to be \emph{statistically stable}, see %\HOX{Lets add a reference also to the book of Papanicolau et al.}
\cite{borcea2002,papa_book}. The study of statistically stable measurement functions have turned to be very useful in particular in the study of inverse source problems in random medium background \cite{bal2002,borcea2011,borcea2015,borcea2003,borcea2016,borcea2002,papa_book}.

Despite the non-linear terms generated by the 2nd order and multiple scattering, it is interesting to note the linear relation between the data and the local strength $\mu$ in \eqref{eq:intro_principal_symbol}. This suggests that, whenever \eqref{id:linear_relation} holds, the local strength of the Born approximation of $q$ equals the local strength of the full potential $q$.

Let us now formulate the main theorem of this paper.
\begin{theorem}
\label{thm:main_theorem_rigorous} \sl
Let $q$ be a Gaussian microlocally isotropic random field of order $-3$ in $D \subset \R^3$. Then, the measurement data $\{ M(\tau_j,\theta_j) : j\in \N \}$, with $\{ (\tau_j,\theta_j) : j\in\N \}$ any dense subset of $\R_+ \times \mathbb{S}^2 $, determines the local strength $\mu$ almost surely.
\end{theorem}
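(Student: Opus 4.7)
The plan is to prove the statistical stability formula $M(\tau,\theta) = c\,\widehat{\mu}(2\tau\theta)$ of \eqref{id:linear_relation} almost surely for each fixed $(\tau,\theta) \in \R_+ \times \mathbb{S}^2$, from which the theorem follows at once. Indeed, once this identity holds almost surely for each fixed argument, a countable union of null sets guarantees it on the whole set $\{(\tau_j,\theta_j) : j\in\N\}$ with probability one. Since $\mu$ is smooth and compactly supported in $D$, its Fourier transform $\widehat{\mu}$ is continuous (in fact real-analytic) on $\R^3$, and the density of $\{2\tau_j\theta_j : j\in\N\}$ in $\R^3$ then determines $\widehat{\mu}$ globally, and hence $\mu$ by Fourier inversion.

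My first step would be a Born-type decomposition of the far-field via the Lippmann--Schwinger equation: write
\[
u^\infty(k,\theta,-\theta) = c_3\,\widehat{q}(2k\theta) + \mathcal{R}(k,\theta),
\]
where the leading term is the first Born approximation and $\mathcal{R}$ collects all multiple-scattering contributions arising from iterating the outgoing resolvent $R_k^+$ against $q$. Using the high-frequency resolvent estimates developed in Section~\ref{sec:regularity} together with the almost-sure $L^p_{-s}(\R^3)$ bounds on $q$, one would show that $\mathcal{R}$ is of strictly lower order in $k$ than the Born term, strongly enough that after multiplying by $k^3$ and inserting into \eqref{eq:intro_measurement_data}, the Born--remainder and remainder--remainder cross products both vanish in the limit $K \to \infty$. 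This reduces the computation of $M$ to the purely Born--Born contribution.

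Next I would analyse the Born--Born term. Using that $C_q$ is a classical pseudodifferential operator of order $-3$ with principal symbol $\mu(x)/|\xi|^3$, the standard symbolic calculation on the Fourier side gives an asymptotic of the form
\[
\expec\bigl[\widehat{q}(2k\theta)\,\overline{\widehat{q}(2(k+\tau)\theta)}\bigr] \sim \frac{c'\,\widehat{\mu}(2\tau\theta)}{k^3} \qquad \text{as } k \to \infty,
\]
with an integrable error, so that the $k^3$-weighted $k$-average converges in expectation to $c\,\widehat{\mu}(2\tau\theta)$ for an explicit constant $c$. The main obstacle is to upgrade this convergence in mean to almost sure convergence. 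Since $q$ is Gaussian, Isserlis's formula reduces the variance of the $k$-average to a double integral of two-point covariances, and oscillatory cancellation in the variable dual to $k-k'$ should produce enough decay to yield $\mathrm{Var}(M(\tau,\theta)) = O(K^{-1})$. A Borel--Cantelli argument along the geometric subsequence $K_\ell = 2^\ell$, combined with an equicontinuity estimate bounding the oscillation of the average between successive $K_\ell$, would then promote this to convergence of the full limit almost surely.

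The hardest step will be the uniform-in-$k$ control of the multiple-scattering remainder $\mathcal{R}$ when $q$ only lies in $L^p_{-s}(\R^3)$ almost surely for some small $s>0$. Classical Agmon--Kato--Kuroda estimates are not directly applicable, so one must pair the rough-potential resolvent estimates announced in the introduction with Fernique-type tail bounds on $\|q\|_{L^p_{-s}}$ and iterate them in the Neumann series for $R_k^+$ carefully enough that each additional factor of $q$ produces a strictly decaying factor in $k$. Once this is achieved, so that the $k^3$-weighted averages of all terms except the Born--Born one vanish almost surely, the identity \eqref{id:linear_relation} follows, and the unique recovery of $\mu$ is obtained by the density and continuity argument described in the first paragraph.
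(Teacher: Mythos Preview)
Your density-and-continuity argument in the first paragraph is exactly how the paper closes the proof, and your treatment of the Born--Born term is in the right spirit (the paper uses a direct ergodicity criterion, Theorem~\ref{thm:aux_ergodicity}, in place of your Borel--Cantelli argument, but either works). The genuine gap is in your handling of the remainder~$\mathcal{R}$.

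You propose to lump everything beyond the first Born term into a single remainder and control it pointwise in $k$ via resolvent estimates. This fails at second order. Iterating Theorem~\ref{thm:relosvent} and Proposition~\ref{prop:multiplication} once gives only
\[
|u_2^\infty(k,\theta,-\theta)| \;\lesssim\; \|q\|_{L^p_{-s}}\,k^{s}\,\|\mathcal{R}_k^+\circ q\|\,\|\chi u_0\|_{H^{s,-\delta}_k} \;=\; o\bigl(k^{-1+4s}\bigr),
\]
so that $k^3|u_2^\infty|^2 = o(k^{1+8s})$, and the average $\tfrac{1}{K}\int_K^{2K}k^3|u_2^\infty|^2\,dk$ blows up like $K^{1+8s}$ rather than vanishing. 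Only from third order onward do the resolvent bounds suffice: with two iterations one gets $|u^\infty - u_1^\infty - u_2^\infty| = o(k^{-2(1-3s)})$ (this is Lemma~\ref{lem:residual_decay}), which \emph{is} good enough after squaring and weighting by $k^3$.

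The paper therefore treats $u_2^\infty$ by a completely different, dimension-specific argument (Section~\ref{subsec:2nd order}): write the explicit $3$-dimensional kernel $\Phi_k^+(x)=c\,e^{ik|x|}/|x|$, change variables to $(z,\rho,\omega)=(x+y,|x-y|,(x-y)/|x-y|)$, and apply a Fourier restriction estimate on $S_k$ (Lemma~\ref{lem:fourier_restriction}) together with Plancherel in $\rho$ to reduce matters to
\[
\mathbb{E}\int_{\mathbb{S}^2}\int_1^\infty k^2|u_2^\infty(k,\theta,-\theta)|^2\,dk\,d\sigma(\theta)<\infty.
\]
The finiteness of this expectation is then checked using Isserlis' theorem and the explicit logarithmic singularity of $K_q$ from Proposition~\ref{prop:correlation_function}(b). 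This second-order analysis is precisely where the restriction $n=m=3$ enters, and it cannot be replaced by the operator-norm bounds you invoke.
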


{Theorem \ref{thm:main_theorem_rigorous} can be interpreted as follows: We consider a complicated potential $q(x,\omega_0)$ that is assumed to be created, before the measurements are made, by a random process, that is, the potential $q(x,\omega_0)$ is a single realization of the Gaussian microlocally isotropic random
 $q(x,\omega)$. We show 
the  measurement data $\{ M(\tau_j,\theta_j) : j\in \N \}$, obtained from this single realization of the process, determine with probability one the principal symbol of the covariance operator of the random process $q(x,\omega)$.}

In order to prove Theorem \ref{thm:main_theorem_rigorous}, we explore the limit in \eqref{eq:intro_measurement_data} by separating the effects of different orders of scattering in the Born series $$u^\infty(k,\theta,-\theta) = \sum_{j=1}^\infty u_j^\infty(k,\theta,-\theta),$$ where $u_j^\infty$ describes the far field of the $j$-th order scattering. Under the assumptions of  Theorem \ref{thm:main_theorem_rigorous}, it can be rigorously shown that the interactions coming from the simple backscattering, i.e.,
\begin{equation*}
	M_1(\tau, \theta) = \lim_{K\to\infty} \frac{1}{K}\int_K^{2K} k^m u_1^\infty(k,\theta,-\theta) \overline{u_1^\infty(k+\tau,\theta,-\theta)} dk
\end{equation*}
coincides with $M(\tau, \theta)$ almost surely, whereas the contribution from 2nd order and multiple backscattering becomes negligible at the limit:
\[\lim_{K\to\infty} \frac{1}{K}\int_K^{2K} k^m u_j^\infty(k,\theta,-\theta) \overline{u_l^\infty(k+\tau,\theta,-\theta)} dk = 0 \quad \text{for } j+l \geq 3.\]
In particular, there exists a known constant $c>0$ such that
\begin{equation*}
M_1(\tau, \theta) = c\, \widehat{\mu} (2\tau\theta)\quad \text{almost surely}.
\end{equation*}

Our work also has direct implications to the problem in any dimension if the interaction of 2nd order and multiple backscattering can be a priori neglected.
\begin{theorem}
\label{thm:main_theorem_born_approx} \sl
Let $q$ be a Gaussian microlocally isotropic random field of order $-m$ in $D \subset \R^n$ with $m > n - 1$ and $n \geq 2$. Then, the data $\{ M_1(\tau_j,\theta_j) : j\in \N \}$, with $\{ (\tau_j,\theta_j) : j\in\N \}$ any dense subset of $\R_+ \times \mathbb{S}^{n - 1}$, determines the local strength $\mu$ almost surely.
\end{theorem}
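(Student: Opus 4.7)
The plan is to prove Theorem~\ref{thm:main_theorem_born_approx} in three steps: (i) identify the first Born far-field pattern with the Fourier transform of the random potential, (ii) evaluate $M_1(\tau,\theta)$ as a deterministic multiple of $\widehat{\mu}(2\tau\theta)$ almost surely, and (iii) recover $\mu$ by Fourier inversion across a dense set of $(\tau,\theta)$.

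The starting point is the explicit Born-approximation formula
\[
u_1^\infty(k,\theta,-\theta) = c_n\, \widehat{q}(-2k\theta),
\]
obtained by applying the outgoing resolvent to the source $q\,e^{ik\theta\cdot x}$ and extracting the far-field asymptotics. Substituting this into the definition of $M_1$ and using that $C_q$ is a classical pseudodifferential operator with full symbol $\sigma(x,\eta)$ and principal symbol $\mu(x)|\eta|^{-m}$, the two-point function of $\widehat{q}$ takes the form
\[
\mathbb{E}\bigl[\widehat{q}(\xi)\,\overline{\widehat{q}(\eta)}\bigr] = \int_{\R^n} e^{i x\cdot(\eta-\xi)}\, \sigma(x,\eta)\, dx.
\]
Setting $\xi=-2k\theta$, $\eta=-2(k+\tau)\theta$ and multiplying by $k^m$, the principal symbol contributes $c_n^2\, k^m (2k+2\tau)^{-m}\,\widehat{\mu}(2\tau\theta) \to 2^{-m}c_n^2\,\widehat{\mu}(2\tau\theta)$ as $k\to\infty$, while the lower-order part of $\sigma$ produces a symbol of strictly smaller order whose $\tfrac{1}{K}\int_K^{2K}$-average vanishes. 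This identifies $\mathbb{E}[M_1(\tau,\theta)] = c\,\widehat{\mu}(2\tau\theta)$ with an explicit positive constant $c$.

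The main obstacle is upgrading convergence in mean to almost-sure convergence. The plan is to estimate the variance
\[
\mathrm{Var}\Bigl(\tfrac{1}{K}\int_K^{2K} X(k)\,dk\Bigr) = \frac{1}{K^2}\iint_{[K,2K]^2} \mathrm{Cov}\bigl(X(k), X(k')\bigr)\, dk\, dk',
\]
where $X(k)=k^m u_1^\infty(k,\theta,-\theta)\overline{u_1^\infty(k+\tau,\theta,-\theta)}$. Since $q$ is Gaussian, Isserlis' formula decomposes $\mathrm{Cov}(X(k),X(k'))$ into products of two-point correlators of $\widehat{q}$ of the type above; after cancellation with the mean term, the surviving pairings carry oscillatory phases of the form $e^{\pm 2i(k\pm k')\theta\cdot x}$. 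Repeated integration by parts in $x$, using smoothness of $\sigma(\cdot,\eta)$ and compact support of $\mu$, produces arbitrarily fast decay of these cross-correlations off the diagonal $k=k'$. The resulting bound $\mathrm{Var}=O(K^{-\delta})$ for some $\delta>0$, together with a Borel--Cantelli argument along the geometric sequence $K_n=2^n$ and a monotonicity step to fill in intermediate $K$, gives almost-sure convergence of $M_1(\tau,\theta)$ to $c\,\widehat{\mu}(2\tau\theta)$ for each fixed $(\tau,\theta)$.

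To conclude, intersecting the probability-one events for each $(\tau_j,\theta_j)$ in the countable dense set yields a single event of full probability on which $M_1(\tau_j,\theta_j)=c\,\widehat{\mu}(2\tau_j\theta_j)$ for every $j$. Since $\mu\in C_c^\infty(D)$, the Fourier transform $\widehat{\mu}$ is continuous on $\R^n$, and the polar map $(\tau,\theta)\mapsto 2\tau\theta$ sends a dense subset of $\R_+\times\mathbb{S}^{n-1}$ to a dense subset of $\R^n\setminus\{0\}$; continuity then forces $\widehat{\mu}$ to be determined on all of $\R^n$, and Fourier inversion recovers $\mu$ almost surely.
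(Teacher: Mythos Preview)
Your overall strategy is correct and reaches the same conclusion as the paper, but the execution differs in two notable ways. The paper does not work directly with the complex process $X(k)$; instead it decomposes $\langle q,e^{i2k\theta\cdot y}\rangle\overline{\langle q,e^{i2(k+\tau)\theta\cdot y}\rangle}$ algebraically into a finite sum of terms $W_k^2$ with each $W_k$ a \emph{real} Gaussian (identity~\eqref{id:decompo}), then applies a ready-made ergodic theorem of Cramer--Leadbetter (Theorem~\ref{thm:aux_ergodicity}) to each centred process $k^m(W_k^2-\mathbb{E}W_k^2)$. The required covariance condition is checked via the elementary identity $\mathbb{E}\big[(X^2-\mathbb{E}X^2)(Y^2-\mathbb{E}Y^2)\big]=2(\mathbb{E}XY)^2$ for centred Gaussian pairs (Lemma~\ref{lemma:gaussian_pair}) together with symbol/non-stationary-phase estimates (Proposition~\ref{prop:first_order_decay}). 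Your route---Isserlis on the full complex fourth moment, a direct variance bound $\mathrm{Var}=O(K^{-\delta})$, and Borel--Cantelli---is a legitimate alternative and is in some sense what underlies the Cramer--Leadbetter statement; it avoids the somewhat artificial real/imaginary bookkeeping, at the cost of having to redo the strong-law machinery by hand.

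One point deserves more care: there is no ``monotonicity step'' available here, since the integrand $k^m u_1^\infty(k)\overline{u_1^\infty(k+\tau)}$ is complex and oscillatory, so neither $K\mapsto\int_K^{2K}(\cdots)$ nor its real and imaginary parts are monotone. To pass from almost-sure convergence along $K_n=2^n$ to all $K$ you need an additional argument---for example, a polynomially growing subsequence $K_n=n^\alpha$ with $\alpha\delta>1$ together with a bound on $\sup_{K_n\le K\le K_{n+1}}\big|M_K-M_{K_n}\big|$ coming from an $L^2$ or maximal estimate on the integrand over the short interval of relative length $O(n^{-1})$. This is standard and your decay estimates on the two-point functions are strong enough to make it work, but it is not a monotonicity argument. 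The paper sidesteps this entirely by quoting Theorem~\ref{thm:aux_ergodicity}, which already delivers the full-$K$ almost-sure limit. Finally, note that the paper treats the case $\mathbb{E}q\neq 0$ separately (Section~\ref{sec:non-zero-mean}); your sketch implicitly assumes $\mathbb{E}q=0$, and the extension requires writing $W_k^2-\mathbb{E}W_k^2=(Z_k^2-\mathbb{E}Z_k^2)+2(\mathbb{E}W_k)Z_k$ with $Z_k=W_k-\mathbb{E}W_k$ and handling the cross term, which is straightforward but should be mentioned.
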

This second theorem suggests that for general dimensions $n \geq 2$, and $m > n - 1$, the measurement data $\{ M(\tau_j,\theta_j) : j\in \N \}$, with $\{ (\tau_j,\theta_j) : j\in\N \}$ any dense subset of $\R_+ \times \mathbb{S}^{n - 1}$, determines the local strength of the Born approximation of $q$  almost surely.

In the literature on scattering, one often makes use of different physical scaling regimes in order to estimate the size of relevant mathematical objects and to design effective reconstruction methods. Our hope is that the theoretical framework we set up here can produce interesting stable algorithms for the inversion of the probabilistic backscattering problem.
In this spirit, we have included a brief analysis in the appendix \ref{sec:scaling_regimes}, where we consider scaling regimes such that the analogue of Theorem \ref{thm:main_theorem_rigorous} holds for $n=m$ with $n \geq 2$.

Our work follows the line of the previous papers \cite{LPS} by Lassas, P\"aiv\"arinta and Sakasman, and \cite{HLP} by Helin, Lassas and P\"aiv\"arinta.
In \cite{LPS}, a similar problem was solved in $\R^2$ for a backscattering problem with point sources in an open and bounded set, but assuming the knowledge of the full scattered wave. The present paper improves this setting by studying scattering of plane waves and assuming only knowledge of the far-field patter of the backscattered wave. Moreover, the results are generalized to arbitrary dimension. Although, our work draws inspiration of this paper, these two aspects require more sophisticated techniques in several parts. Later, in \cite{HLP} Helin \textit{et al} considered backscattering from random Robin boundary condition in half-space geometry of $\R^3$.

The literature on the deterministic inverse backscattering problem is considerably wide.
For uniqueness results in  generic (i.e., dense and open sets) class of potentials, see \cite{MR1012864, MR1110451}. 
Uniqueness of the problem for potentials with controlled angular regularity has been proved in  
\cite{MR3224125}.
Earlier partial results for the inverse backscattering problem  for Schr\"odinger equation has been obtained in \cite{Ike,LL,MU,Rakesh1,Ste1, MR1082237,W1}. Approximative or numerical reconstructions have been studied in \cite{Bei,Ike}.
The recovery of singularities of the potential from backscattering data is analyzed in
\cite{B1,MR1243710,OPS,MR2309667,R1, RV,Ser1,Ser3,Ser2}. %\HOX{The references need to be organised by numbering.}
Other references on inverse backscattering for a time-harmonic Schr\"{o}dinger equation are \cite{MR2512860, MR2781141,U1}.
The backscattering problem has also been studied in the framework of acoustic scattering (see \cite{MR1466676,MR1614940,MR1607660}) and Maxwell equations (see \cite{MR1648523}). For a concise treatment of classical inverse scattering, we refer to \cite{ColtonKress}.

The wave and particle propagation in heterogeneous media has been extensively studied. Often, heterogeneous media is not known precisely and is modelled as a realization of random media with known statistics. Mathematical theory being developed typically relies on multi-scale analysis or homogenization with the aim of capturing the effective properties of the propagation. We refer to the works in \cite{bal2010kinetic, ishimaru1978wave} for various perspectives on wave propagation (whether classical or quantum) in random media.
Let us also mention the papers on random Schr\"{o}dinger models \cite{bal2011asymptotics,GuRyzhik15}, where the potential model involving slowly decaying correlations corresponds closely to the random potential model in the present paper. Notice that our work does not involve assumptions on scaling regimes nor any approximations. However, as mentioned above, we have included { Appendix A} discussing our method from the perspective of multi-scale analysis.

Recently, inverse problems related to imaging of random media have received wide attention \cite{bal2002, bal2005time,bal2007,bal2003, borcea2011, borcea2015, borcea2003, borcea2016,dehoop2012, dehoop09, fouque2007wave}. The key feature of time reversal in a randomly inhomogeneous media 
is that it leads to focusing resolution that is much better than in a homogeneous media. This phenomenon is called super-resolution and appears due to multipathing caused by the random media \cite{borcea2002}. Similar to our work, the back-propagated fields are self-averaging and the imaging method is {statistically stable}, i.e., independent of the realization of the random media

This paper is organized as follows. In the section \ref{sec:random_potential}, we describe in detail our stochastic model for a random potential $q$ and the implications it has for the regularity of $q$. As discussed above, these regularity considerations require to develop the theory of the forward problem for non-regular potentials. This is studied in the section \ref{sec:direct-scattering}. The inverse problem is then covered in the section \ref{sec:inverse_problem}. The effects of first, second and higher order scattering for zero-mean potentials are studied separately in the sections \ref{subsec:single}, \ref{subsec:2nd order} and \ref{subsec:higher_order}, respectively. In the section \ref{sec:non-zero-mean} we proof Theorem \ref{thm:main_theorem_rigorous} for non-zero-mean potentials. Finally, in Appendix \ref{sec:scaling_regimes} we consider the physical scaling regimes where our method could be numerically effective and, afterwards, in Appendix \ref{sec:gaussians} give some basic results regarding Gaussian distributions.

\section{Random potential}
\label{sec:random_potential}

\subsection{Microlocally isotropic random field}
\label{sec:isotropic_rf}

In order to provide a precise mathematical description of the random potential to be considered, let $(\Omega, \mathcal{H}, \mathbb{P})$ be a complete probability space. Since we are interested in the properties of an object with a complicated micro-structure, we start by assuming that $q$ is a \emph{generalized random function}\footnote{For properties of generalized random functions, see \cite{MR676644}.}. 
{Below, the generalized function $u$ defines a linear and continuous function $u:C^\infty_0(\R^n; \R)\to \R$. For $\phi\in C^\infty_0(\R^n; \R)$  we denote $u(\phi)=\langle u, \phi \rangle$
and for the set of generalized functions (or distributions) we use the notation $\mathcal{D}'(\R^n; \R)$. We also recall that any
function $u\in L^1_{loc}(\R^n)$ defines a generalized function given by $\langle u, \phi \rangle=\int_{\R^n} u(x)\phi(x)\,dx$. 

The assumption that $q$  is a generalized random function} means that $q$ is a mapping defined on $\Omega$ such that, for every $\omega \in \Omega$, the realization $q (\omega)$ is a linear real valued functional on $C^\infty_0(\R^n; \R)$---the space of smooth real-valued functions with compact support in $\R^n$---with $n \geq 2$ and the function
\[\omega \in \Omega \longmapsto \langle q(\omega), \phi \rangle \in \R\]
is a random variable for all $\phi \in C^\infty_0(\R^n; \R)$.
Moreover we assume that, for every compact $K \subset \R^n$, there exists a non-negative random variable $C : (\Omega, \mathcal{H})\to \R_+$ with $\mathbb{E} C^2 < \infty$ and $N \in \N$ such that
for $\mathbb{P}$-a.e. $\omega\in\Omega$ we have
\begin{equation}
|\langle q(\omega), \phi \rangle| \leq C(\omega) \sum_{|\alpha| \leq N} \sup_{x \in \R^n} |\partial^\alpha \phi (x)| 
\label{in:def-distribution}
\end{equation}
for all $\phi \in C^\infty_0(\R^n; \R)$ with compact support $\supp\,\phi \subset K$.
Note that $q(\omega) \in \mathcal{D}'(\R^n; \R)$, which denotes the space of real distributions in $\R^n$. Obviously, $q(\omega)$ can be extended to the space of smooth (complex-valued) functions with compact support as
\[\langle q(\omega), \phi \rangle = \langle q(\omega), \mathrm{Re}\, \phi \rangle + i \langle q(\omega), \mathrm{Im}\, \phi \rangle, \qquad \phi \in C^\infty_0(\R^n).\]
A generalized random function is said to be \emph{Gaussian} if the random variable
\begin{equation}
\label{term:gaussianVECTOR}
r_1 \langle q, \phi_1 \rangle + \dots + r_l \langle q, \phi_l \rangle
\end{equation}
has a Gaussian distribution for every $r_1, \dots, r_l \in \R$, $\phi_1, \dots, \phi_l \in C^\infty_0(\R^n; \R)$ and $l \in \mathbb{N} \setminus \{ 0 \}$. We say it is compactly supported if there exists a bounded domain $D$ in $\R^n$ such that
$\supp q \subset D$
almost surely.
Note that the probability law of a generalized Gaussian field $q$ is determined by
\[\mathbb{E} q : \phi \in C^\infty_0(\R^n; \R) \longmapsto \mathbb{E}\langle q, \phi \rangle \in \R\]
\[\mathrm{Cov}\, q : (\phi_1, \phi_2) \in C^\infty_0(\R^n; \R)^2 \longmapsto \mathrm{Cov}(\langle q, \phi_1 \rangle, \langle q, \phi_2 \rangle) \in \R,\]
where $\mathbb{E}\langle q, \phi \rangle$ denotes the expected value of $\langle q, \phi \rangle$ and
\[\mathrm{Cov}(\langle q, \phi_1 \rangle, \langle q, \phi_2 \rangle) = \mathbb{E}\big((\langle q, \phi_1 \rangle - \mathbb{E}\langle q, \phi_1 \rangle) (\langle q, \phi_2 \rangle - \mathbb{E}\langle q, \phi_2 \rangle)\big)\]
denotes the covariance of $\langle q, \phi_1 \rangle$ and $\langle q, \phi_2 \rangle$. Note that $\mathbb{E}q \in \mathcal{D}'(\R^n; \R)$. The \emph{covariance operator} $C_q : \phi \in C^\infty_0(\R^n; \R) \longmapsto C_q \phi \in \mathcal{D}'(\R^n; \R) $ is defined as
\[\langle C_q \phi, \psi \rangle = \mathrm{Cov}(\langle q, \phi \rangle, \langle q, \psi \rangle). \]
Since $C_q$ is continuous, by the Schwartz kernel theorem, there exists a unique $K_q \in \mathcal{D}'(\R^n \times \R^n; \R)$, usually called the \emph{covariance function}, such that
\[ \langle K_q, \psi \otimes \phi \rangle = \langle C_q \phi, \psi \rangle = (\mathrm{Cov}\, q) (\phi, \psi) \]
for all $\phi, \psi \in C^\infty_0(\R^n; \R)$ (or more generally, for all $\phi, \psi \in C^\infty_0(\R^n)$). In particular,
\begin{equation}
K_q = \mathbb{E}\big((q - \mathbb{E}q) \otimes (q - \mathbb{E}q)\big).
\label{id:kernel_cov}
\end{equation}
It is often convenient to write
\[K_q(x, y) = \mathbb{E}\big((q(x) - \mathbb{E}q(x)) (q(y) - \mathbb{E}q(y))\big) \]
and
\[\langle K_q, \psi \otimes \phi \rangle = \int_{\R^n} \int_{\R^n} K_q(x,y) \psi(x) \phi(y) \,dx \, dy.\]
\begin{definition}\label{def:ml_iso} \sl
A generalized function $q$ on $\R^n$ is called \emph{microlocally isotropic of order $-m$ in $D$}, if the following conditions hold:
\begin{enumerate}
\item $\mathbb{E} q$ is smooth,
\item $q$ is supported in $D$ a.s.,
\item the covariance operator $C_q$ is a classical pseudo differential operator of order $-m$ with $n - 1 < m \leq n + 1$  and
\item $C_q$ has a principal symbol of the form $\mu(x) |\xi|^{-m}$ with $\mu \in C^\infty_0 (\R^n; \R)$, $\supp \mu \subset D$ and $\mu(x) \geq 0$ for all $x \in \R^n$.
\end{enumerate}
\end{definition}
In consequence, if $q$ is microlocally isotropic, we have $\supp (\mathbb{E} q) \subset D$ and
\[C_q \phi (x) = \frac{1}{(2\pi)^{n}} \int_{\R^n} \int_{\R^n} e^{i(x - y) \cdot \xi} c_q(x, \xi) \phi(y) \,dy \,d\xi \]
for a classical symbol $c_q \in {\mathcal S}^{-m}(\R^n\times \R^n)$. Moreover, there exists a classical symbol $a \in {\mathcal S}^{-m-1}(\R^n \times \R^n)$ such that
\begin{equation}
a(x, \xi) = c_q(x, \xi) - \mu(x) |\xi|^{-m} \text{ for } x \in \R^n \text{ and } |\xi| \geq 1.
\label{id:isotropic}
\end{equation}
The covariance function and the symbol of $C_q$ are connected via the following identity
\begin{equation}
K_q (x, y) = \frac{1}{(2 \pi)^{n/2}} \mathcal{F}^{-1}\big( c_q(x, \centerdot) \big)(x - y),
\label{id:kernel}
\end{equation}
where $\mathcal{F}^{-1}$ denotes the inverse Fourier transform. Here, for an integrable function $f$, we define $(\mathcal{F}f)(\xi) = (2\pi)^{-n/2} \int_{\R^n} e^{-i\xi\cdot x} f(x) \, dx$ and frequently abbreviate $\widehat{f} = (\mathcal{F}f)$.
In our particular case, $\supp K_q \subset D \times D$ and
\begin{align}
\int_{\R^n} \Big( \int_{\R^n} K_q(x,y) e^{-i\xi \cdot (x - y)} \,dy \Big)  \phi(x) \, dx &= \int_D c_q(x, \xi) \phi(x) \,dx
\label{id:pre-expansion} \\
& = \int_{\R^n} \mu(x) |\xi|^{-m} \phi(x) \,dx + \int_D a(x, \xi) \phi(x) \,dx
\label{id:expansion}
\end{align}
for every $\phi \in C^\infty(\R^n)$ and all $|\xi| \geq 1$.

Let us illustrate this definition with a brief example.
\begin{example} \sl
\label{example:model}
Let $W$ stand for the generalized Gaussian white noise. That is, $W$ is a generalized Gaussian field with $\expec W = 0$ and its covariance operator satisfying
\begin{equation*}
	\expec \big( \langle W, \phi\rangle \langle W, \psi\rangle \big) = \int_{\R^n} \phi(x) \psi(x) \, dx
\end{equation*}
for every $\phi,\psi\in C^\infty_0(\R^n; \R)$. It is well known that
$W \in H_{loc}^{-n/2-\epsilon}(\R^n)$ a.s. for any $\epsilon>0$.
Our example of a microlocally isotropic random potential of order $-m$ in $D$ is given by
\begin{equation*}
	q = \sqrt{\mu} ({\rm I}-\Delta)^{-m/4} W + q_0
\end{equation*}
with $\mu$ and $m$ as in Definition \ref{def:ml_iso}, and $q_0$ a smooth real-valued function in $\R^n$ with support in $D$. Thus, $q$ is a generalized Gaussian field with covariance operator
\begin{equation*}
	C_q = M_{\sqrt \mu} ({\rm I}-\Delta)^{-m/2} M_{\sqrt \mu}
\end{equation*}
where $M_{\sqrt \mu} \phi(x) = \sqrt{\mu(x)} \phi(x)$. Its covariance function is
\begin{equation*}
	K_q(x,y) =  \sqrt{\mu(x)} \sqrt{\mu(y)} G_m(x-y) 
\end{equation*}
with $G_m \in L^1(\R^n)$ (see page 132 in \cite{MR0290095}) such that
\[\widehat{G_m}(\xi) = \frac{1}{(2\pi)^{n/2}} \frac{1}{(1 + |\xi|^2)^{-m/2}}.\]
Finally, since $C_q$ has a principal symbol of the form $\mu(x)|\xi|^{-m}$, we see that $q$ is a microlocally isotropic random potential of order $-m$ in $D$.
\end{example}

\begin{example}\sl
\label{ex:frac_brownian}
Following the Example 1 in \cite{LPS} let us define the multidimensional fractional Brownian motion in $\R^n$ for the Hurst index $H$ as the centered Gaussian process $X_H(z)$ indexed by $z \in \R^n$ with following properties:
\begin{align*}
	& \expec | X_H(z_1) - X_H(z_2)|^2 = |z_1 - z_2|^{2H} \quad \textrm{for all } z_1, z_2 \in \R^n \\
	& X(z_0) = 0 \quad {\rm and} \\
	& \textrm{the paths } z \mapsto X_H(z) \textrm{ are a.s. continuous.}
\end{align*}
The existence and basic properties of $X_H$ are well-known \cite{Kahane}. Let us define the potential $q$ by setting
\begin{equation*}
	q(z,\omega) = \sqrt{\mu}(z) X_H(z,\omega)
\end{equation*}
for some $\mu\in C^\infty_0(\R^n)$ and index $H>0$. It follows that the principal symbol of $C_q$ is of the form $\mu(z) |\xi|^{-n-2H}$.
\end{example}

\begin{figure}[h]
\begin{picture}(400,100)(40,10)
\put(0,0){\includegraphics[width=0.33\textwidth]{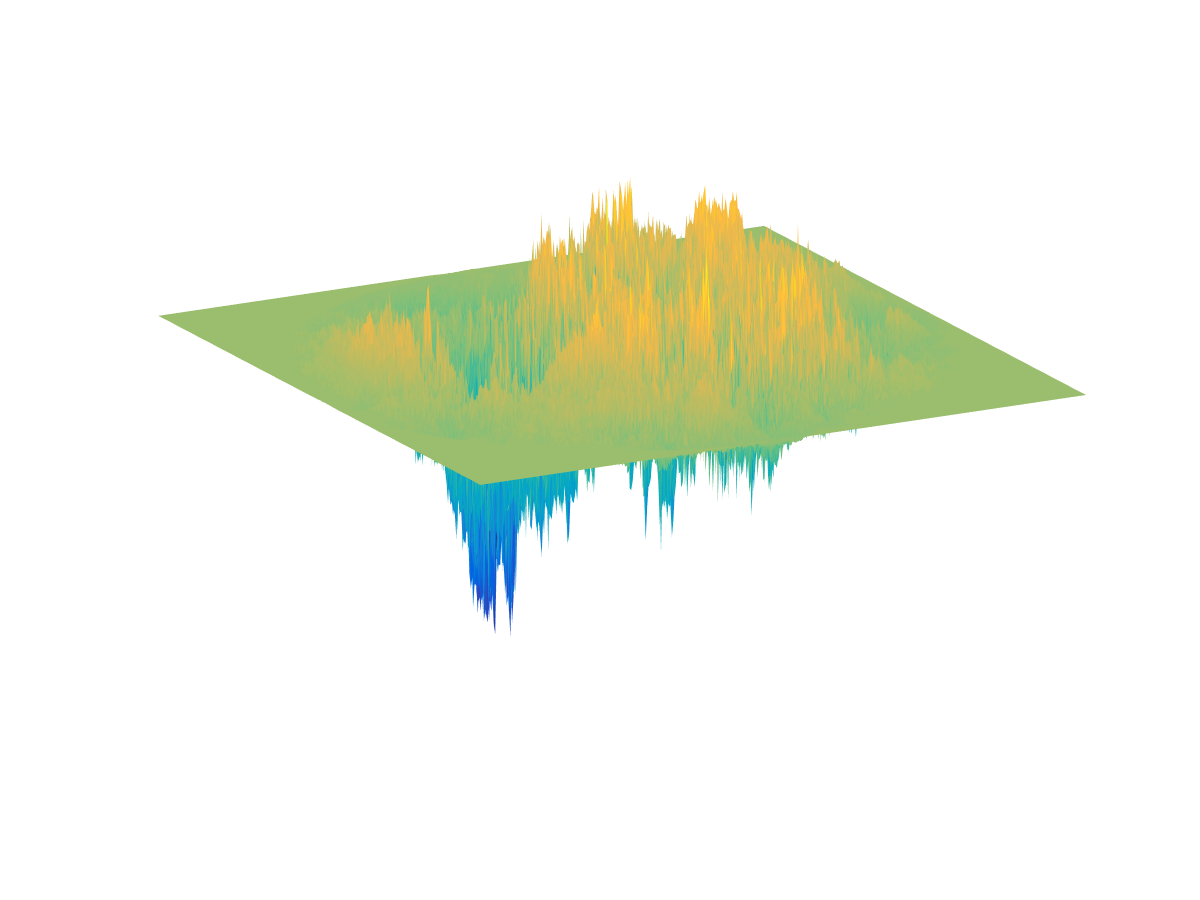}}
\put(160,0){\includegraphics[width=0.33\textwidth]{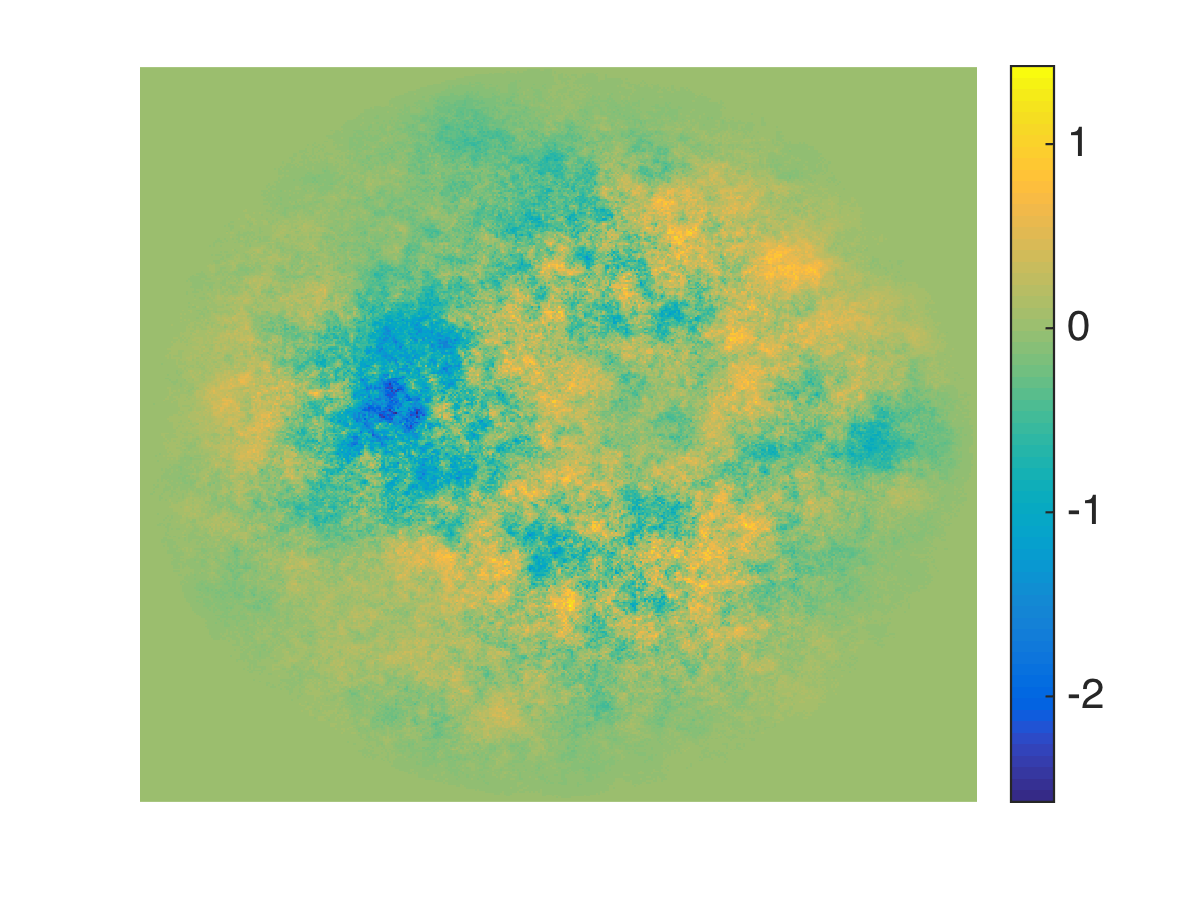}}
\put(330,0){\includegraphics[width=0.33\textwidth]{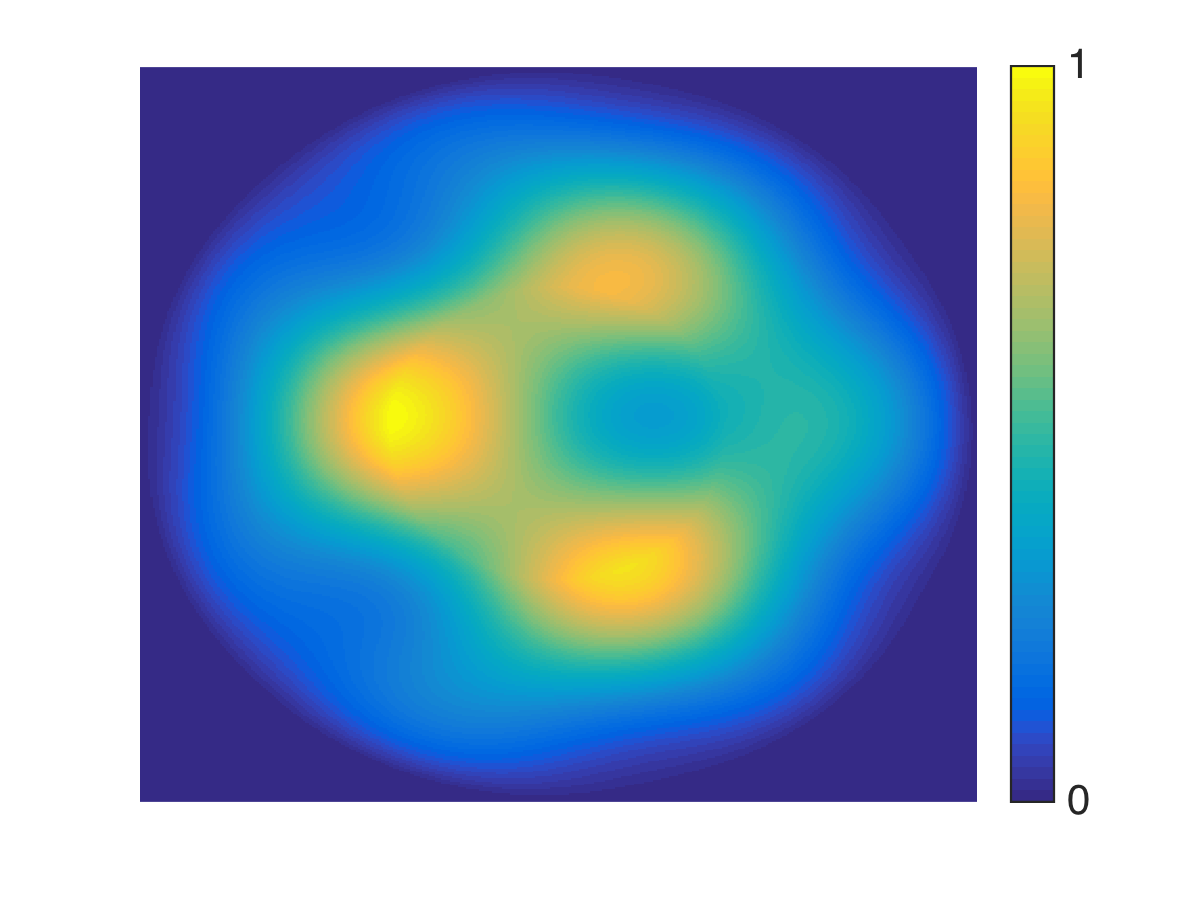}} 
\end{picture}
\caption{(Color in the online version of the paper) A realization of the random potential $q(x)$ in Example \ref{ex:frac_brownian} was generated on $\R^2$ with the Hurst index $H=0.25$ { and the local strength $\mu$. A realization $q(x) = q(x,\omega_0)$ is shown in the figure on the left and in the middle. The figure on the right is the local strength $\mu(x)$  of the random field $q$(x), i.e.,\ the principal symbol of the covariance operator $C_q$. Notice that the function $\mu$ is large on the areas where the realization of the random field $q(x,\omega_0)$ has large local fluctuations.}}
\label{fig_recon}
\end{figure}

\subsection{Regularity of the potential}\label{sec:regularity}

We will use the potential Sobolev spaces $L^p_s(\R^n)$ to determine the regularity of  the realizations of $q$. Let us recall the definition and a basic property of these spaces.

Let $\mathcal{J}_s$ denote the Bessel potential $\mathcal{J}_s = (\rm I - \Delta)^{- s/2}$, i.e., for any Schwartz function $f\in \mathcal{S}(\R^n)$ it holds that
\[\widehat{\mathcal{J}_s f} (\xi) = (1 + |\xi|^2)^{-s/2} \widehat{f}(\xi)\]
for all $\xi \in \R^n$. The Bessel potential extends to temperate distributions and, in particular, can be applied to almost every realization of $q$. The potential Sobolev space $L^p_s(\R^n)$ with $1 \leq p \leq \infty$ and $s \in \R$ is defined by the set of $ f = \mathcal{J}_s g$ such that $g \in L^p(\R^n)$ and it is endowed with the norm
\[\| f \|_{L^p_s} = \| g \|_{L^p}.\]
When $p = 2$, the space $L^2_s(\R^n)$ is commonly denoted by $H^s(\R^n)$. When $k \in \N$ and $1 < p < \infty$, the space $L^p_k(\R^n)$ can be identified with usual Sobolev spaces $W^{k,p}(\R^n)$.
{We also use the notation $W^{k,2}(\R^n)=H^{k}(\R^n)$.}
Note that, since $\mathcal{J}_t : L^p_s (\R^n) \longrightarrow L^p_{s+t} (\R^n) $ is an isometric isomorphism, $f \in L^p_s (\R^n) $ if and only if $\mathcal{J}_t f \in L^p_{s + t} (\R^n) $.

Now we can stablish the regularity of the realizations of $q$.
\begin{proposition} \label{prop:SobolevRegularity} \sl Let $q$ be a microlocally isotropic random potential of order $-m$ in $D$. Then, $q \in L^p_s (\R^n)$ almost surely for any $1 < p < \infty$ and $s < (m - n)/2$.
\end{proposition}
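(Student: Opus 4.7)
The plan is to show that $X:=\mathcal{J}_{-s}q$ has finite $L^p$-norm almost surely by combining the Gaussian moment identity with Tonelli's theorem, applied to a pointwise version of $X$. Since $\mathbb{E}q$ is smooth and compactly supported in $\bar D$, it lies in $L^p_s(\R^n)$ for every $s$ and $p$, so I may reduce to the centered case $\mathbb{E}q=0$. Then $X$ is a centered Gaussian generalized field with covariance operator $\tilde C:=\mathcal{J}_{-s}C_q\mathcal{J}_{-s}$, which by the pseudodifferential calculus is classical of order $-(m-2s)$. The hypothesis $s<(m-n)/2$ gives $m-2s>n$, so standard symbol estimates imply that the Schwartz kernel $K_X(x,y)$ of $\tilde C$ is continuous on $\R^n\times\R^n$ and in particular locally bounded on the diagonal.

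The critical step is to upgrade this local bound to a globally integrable one by showing that $K_X(x,x)$ decays exponentially as $|x|\to\infty$. Using $s<1/2<2$, I would write $\mathcal{J}_{-s}=(1-\Delta)\mathcal{J}_{2-s}$, so that $X=(1-\Delta)(G_{2-s}*q)$, where $G_{2-s}$ is the Bessel kernel of order $2-s$, an $L^1$ function with exponential decay at infinity. Consequently
\[
K_X(x,y)=(1-\Delta_x)(1-\Delta_y)\bigl\langle K_q,\,G_{2-s}(x-\,\cdot\,)\otimes G_{2-s}(y-\,\cdot\,)\bigr\rangle_{D\times D}.
\]
Since $K_q$ is a distribution of finite order supported in the compact set $D\times D$, pairing it against test functions whose $C^N$-seminorms on $D\times D$ are $O(e^{-c|x|}+e^{-c|y|})$ (a property preserved by the outer differentiations) yields $|K_X(x,y)|\leq Ce^{-c(|x|+|y|)/2}$ for $|x|+|y|$ large. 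Hence $K_X(x,x)^{p/2}\in L^1(\R^n)$ for every $p\in[1,\infty)$.

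With continuous and globally integrable $K_X(x,x)$, the field $X$ admits a measurable pointwise version $x\mapsto X(x)$, obtained for example as the $L^2(\Omega)$-limit $X(x)=\lim_{\varepsilon\to 0}\langle X,\rho_\varepsilon(\,\cdot\,-x)\rangle$ for mollifiers $\rho_\varepsilon$; this version satisfies $\mathbb{E}|X(x)|^2=K_X(x,x)$ and is Gaussian at each fixed $x$. The Gaussian moment identity gives $\mathbb{E}|X(x)|^p=c_p K_X(x,x)^{p/2}$, and Tonelli's theorem then yields
\[
\mathbb{E}\|X\|_{L^p(\R^n)}^p=c_p\int_{\R^n}K_X(x,x)^{p/2}\,dx<\infty,
\]
from which $X\in L^p(\R^n)$ almost surely, or equivalently $q\in L^p_s(\R^n)$ almost surely.

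The main obstacle I expect is the second step: extracting the global decay of $K_X(x,x)$ from the pseudodifferential structure of $\tilde C$. The calculus alone only supplies local continuity of the kernel, so the exponential decay at infinity must be obtained by carefully combining the compact support of $K_q$ with the exponential decay of the Bessel kernel $G_{2-s}$ and the fact that $K_q$ is a distribution of finite order on $D\times D$. Once this decay is established, the remainder of the argument is a routine application of the Gaussian moment identity and Fubini/Tonelli.
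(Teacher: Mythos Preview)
Your approach is correct and shares the paper's core idea: the operator $\tilde C=\mathcal{J}_{-s}C_q\mathcal{J}_{-s}$ is a classical pseudodifferential operator of order $-(m-2s)<-n$, so its diagonal kernel is locally bounded, and Gaussian hypercontractivity upgrades second moments to $p$th moments. The execution, however, diverges from the paper's in a way worth recording.

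The paper works purely locally and never analyzes decay at infinity. It mollifies $\mathcal{J}_{-s}q$ to $f_\varepsilon(x)=\langle\mathcal{J}_{-s}q,\varphi_\varepsilon(x-\cdot)\rangle$, obtains a uniform bound $\mathbb{E}\|f_\varepsilon\|_{L^p(K)}^p\le C$ on each compact $K$ from the symbol estimate, extracts a weak limit in $L^p(\Omega\times K)$ via Banach--Alaoglu, and identifies this limit distributionally with $\mathcal{J}_{-s}q$ using \eqref{in:def-distribution} and dominated convergence. This yields only $\mathcal{J}_{-s}q\in L^p_{\mathrm{loc}}$ almost surely; the passage to global $L^p_s$ then comes for free from the compact support of $q$ via the decomposition $q=\mathcal{J}_s(\chi\mathcal{J}_{-s}q)+\mathcal{J}_s((1-\chi)\mathcal{J}_{-s}q)$, the second term being Schwartz since $\supp q$ and $\supp(1-\chi)$ are disjoint.

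Your route instead invests effort in proving global decay of $K_X(x,x)$ through the exponential tail of the Bessel kernel $G_{2-s}$, then builds a pointwise measurable version of $X$ and runs Tonelli directly on all of $\R^n$. This is conceptually direct but pays for it in two places the paper sidesteps: (i) the identity $K_X(x,y)=(1-\Delta_x)(1-\Delta_y)\langle K_q,G_{2-s}(x-\cdot)\otimes G_{2-s}(y-\cdot)\rangle$ only makes literal sense for $x,y$ outside $\bar D$, where the Bessel kernels restricted to $D$ are genuinely smooth test functions, so you must splice it with the pseudodifferential argument on the compact region; (ii) producing a \emph{jointly} measurable version of $X$ and verifying it represents the original distribution $\mathcal{J}_{-s}q$ is real work that you flag as routine but that the paper handles explicitly through its mollification--weak-compactness--identification loop. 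The paper's detour through local estimates is slightly more economical precisely because the cutoff trick at the end makes the decay analysis unnecessary; your argument buys a quantitatively sharper conclusion (explicit exponential decay of $\mathbb{E}|X(x)|^2$) at the price of that extra step.
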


\begin{corollary}\sl Let $q$ be a microlocally isotropic random potential of order $-m$ in $D$ with $n < m \leq n + 1$. Then, $q \in C^{0,\alpha} (\R^n)$ almost surely for any $0 < \alpha < (m - n)/2$.
\end{corollary}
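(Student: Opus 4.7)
The plan is to deduce the corollary as a direct consequence of Proposition \ref{prop:SobolevRegularity} combined with the Sobolev embedding of Bessel potential spaces into H\"older spaces. Fix $\alpha$ with $0 < \alpha < (m-n)/2$. Since $\alpha < (m-n)/2$, one can insert a real number $s$ in the open interval $(\alpha, (m-n)/2)$. Note that because $n < m \leq n+1$, we have $(m-n)/2 \leq 1/2 < 1$, so $\alpha \in (0,1)$ and $s \in (0,1)$ as well, which keeps us safely within the regime where H\"older embedding is the natural target.

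Next, I would choose the integrability exponent $p$. The Sobolev embedding theorem for Bessel potential spaces asserts that $L^p_s(\R^n) \hookrightarrow C^{0,\alpha}(\R^n)$ whenever $s - n/p > \alpha$ (with $0 < \alpha < 1$ and $s - n/p$ not an integer, a condition that can always be arranged by perturbing $s$ slightly). Given our choice of $s > \alpha$, the quantity $s - \alpha$ is positive, so it suffices to take any $p$ with $p > n/(s - \alpha)$. With this choice, $1 < p < \infty$ and $s < (m-n)/2$, so Proposition \ref{prop:SobolevRegularity} applies and yields $q \in L^p_s(\R^n)$ almost surely.

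Combining the two ingredients, almost every realization of $q$ lies in $L^p_s(\R^n) \subset C^{0,\alpha}(\R^n)$, which is exactly the stated conclusion. Finally, to pass from a fixed $\alpha$ to the assertion for every $\alpha < (m-n)/2$ simultaneously, I would intersect over a countable dense sequence $\alpha_k \uparrow (m-n)/2$; the intersection of countably many probability-one events is still a probability-one event, so $q \in \bigcap_k C^{0,\alpha_k}(\R^n)$ almost surely, which covers every admissible $\alpha$.

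No serious obstacle is expected; the corollary is essentially a bookkeeping exercise combining Proposition \ref{prop:SobolevRegularity} with a standard embedding. The only subtlety is respecting the non-integrality condition in the Sobolev--H\"older embedding, which is handled by the freedom to perturb $s$ within the open interval $(\alpha, (m-n)/2)$, and the fact that compact support of the realizations (guaranteed by the microlocally isotropic assumption) makes the embedding on $\R^n$ unambiguous.
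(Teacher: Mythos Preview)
Your proposal is correct and follows essentially the same approach as the paper: apply Proposition \ref{prop:SobolevRegularity} to place $q$ in $L^p_s(\R^n)$ for suitable $s < (m-n)/2$ and large $p$, then invoke the Sobolev embedding $L^p_s \hookrightarrow C^{0,\alpha}$ with $\alpha < s - n/p$. The paper's one-line proof simply selects $s$ close to $(m-n)/2$ and $p$ large so that $\alpha = s - n/p$, which is the same parameter-choosing step you carry out; your extra countable-intersection remark is a harmless addition but not needed since the statement is for each fixed $\alpha$.
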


The corollary follows from the proposition using the Sobolev embeddings and the fact that, if $ m > n$ and $0 < \alpha < (m - n) / 2$, there exist $s < (m - n)/2 $ close to $(m - n)/2$ and $ p > n$ very large such that $\alpha = s - n/p < (m - n)/2$.

Let us prove Proposition \ref{prop:SobolevRegularity} in the case $\mathbb{E} q = 0$. The general case follows by our assumption of  $\mathbb{E} q$ being smooth. For $\varepsilon \in (0, 1]$, define
\begin{equation}
f_\varepsilon (\omega, x) = \langle \mathcal{J}_{-s} q(\omega), \varphi_\varepsilon (x - \centerdot) \rangle \label{id:def_of_fepsilon}
\end{equation}
with $\varphi_\varepsilon (x) = \varepsilon^{-n} \varphi (x/\varepsilon)$, $\varphi \in C^\infty_0(\R^n; [0, 1])$ and $\int_{\R^n} \varphi(x) \, dx = 1$. Note that $f_\varepsilon (\omega)$ tends to $\mathcal{J}_{-s} q(\omega)$ in $\mathcal{D}'(\R^n)$ as $\varepsilon$ goes to $0$ for $\mathbb{P}$-almost every $\omega \in \Omega$.

\begin{lemma} \sl For every $p \in [1, \infty)$ there exists $C = C(p)$ such that
\[(\mathbb{E} |f_\varepsilon (x)|^p )^{1/p} \le C (\mathbb{E} |f_\varepsilon (x)|^2 )^{1/2},\]
where $f_\varepsilon (x) = f_\varepsilon (\centerdot, x)$.
\end{lemma}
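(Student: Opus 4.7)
The plan is to reduce the lemma to the standard equivalence of moments for Gaussian random variables. The key observation is that, for each fixed $x\in\R^n$ and $\varepsilon\in(0,1]$, the real-valued map $\omega\mapsto f_\varepsilon(\omega,x)$ is a centered Gaussian random variable, so its $L^p$ norm is a universal multiple of its $L^2$ norm.

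First I would justify the Gaussianity. Using that the Bessel potential $\mathcal{J}_{-s}$ is a self-adjoint Fourier multiplier acting continuously on $\mathcal{S}(\R^n)$, and that $\mathcal{J}_{-s}q(\omega)$ is defined as a tempered distribution for a.e.~$\omega$ (via duality), one can move $\mathcal{J}_{-s}$ onto the test function:
\[
f_\varepsilon(\omega,x) \;=\; \langle \mathcal{J}_{-s}q(\omega), \varphi_\varepsilon(x-\cdot)\rangle \;=\; \langle q(\omega), \mathcal{J}_{-s}\varphi_\varepsilon(x-\cdot)\rangle.
\]
The function $\psi_{\varepsilon,x}:=\mathcal{J}_{-s}\varphi_\varepsilon(x-\cdot)$ belongs to $\mathcal{S}(\R^n)$ but need not have compact support. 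However, since $\supp q(\omega)\subset D$ almost surely, we may fix $\chi\in C^\infty_0(\R^n;\R)$ with $\chi\equiv 1$ on a neighbourhood of $D$ and replace $\psi_{\varepsilon,x}$ by $\chi\,\psi_{\varepsilon,x}\in C^\infty_0(\R^n;\R)$ without changing the pairing (almost surely). By the defining property \eqref{term:gaussianVECTOR} of a generalized Gaussian field, $\langle q,\chi\psi_{\varepsilon,x}\rangle$ is then a Gaussian random variable, and it is centered because the assumption $\mathbb{E}q=0$ gives $\mathbb{E}f_\varepsilon(x)=\langle \mathbb{E}q,\chi\psi_{\varepsilon,x}\rangle=0$. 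Finiteness of the second moment follows from the moment bound coming from \eqref{in:def-distribution}.

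The remaining step is the classical Gaussian moment identity: for a centered Gaussian $X$ with variance $\sigma^2$,
\[
\mathbb{E}|X|^p \;=\; c_p\,\sigma^p, \qquad c_p \;=\; \frac{2^{p/2}}{\sqrt{\pi}}\,\Gamma\!\left(\tfrac{p+1}{2}\right).
\]
Applying this to $X=f_\varepsilon(\cdot,x)$ with $\sigma^2=\mathbb{E}|f_\varepsilon(x)|^2$ and extracting a $p$-th root yields the inequality with $C(p)=c_p^{1/p}$, which is uniform in $x$ and in $\varepsilon$. There is no serious obstacle: the only point requiring care is the verification that the Bessel potential can be shifted onto the mollifier while preserving membership in the class of test functions covered by the generalized Gaussian field hypothesis, which is handled by the cutoff $\chi$ above.
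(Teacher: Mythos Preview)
Your argument is correct and follows the same core idea as the paper---reducing to the standard comparison of moments for centered Gaussian variables---but your execution is more direct. You observe that $\mathcal{J}_{-s}$ is a real, even Fourier multiplier, so $\mathcal{J}_{-s}\varphi_\varepsilon(x-\cdot)$ is a \emph{real} Schwartz function; after cutting off by $\chi$ you obtain a genuine element of $C^\infty_0(\R^n;\R)$, and the Gaussianity of $f_\varepsilon(x)$ follows immediately from the defining property of $q$. You then invoke the full moment identity $\mathbb{E}|X|^p=c_p\sigma^p$ for all $p\ge 1$ at once. The paper instead splits $f_\varepsilon(x)$ into real and imaginary parts (harmless, but unnecessary since the test function is already real), passes from the $p$-th moment to the next even moment $2(j+1)$ via H\"older, and finally appeals to the even-moment identity (Lemma~\ref{lem:evenMOMENTUM}). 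Your route avoids both the real/imaginary decomposition and the intermediate reduction to even exponents, at the modest cost of quoting the general $p$-moment formula rather than the more elementary even-moment case.
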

\begin{proof}
If $p \in [1, 2)$, the lemma follows applying H\"older inequality. The case $p = 2$ is obvious. So it only remains the case $p \in (2, \infty)$. For every $p \in (2, \infty)$, there is an only $j \in \N\setminus \{ 0 \}$ such that $2j < p \leq 2(j + 1)$, and, by H\"older inequality, we see that
\begin{equation}
(\mathbb{E} |f_\varepsilon (x)|^p )^{1/p} \le (\mathbb{E} |f_\varepsilon (x)|^{2(j+1)} )^\frac{1}{2(j+1)}.
\label{es:fromLp}
\end{equation}
By the triangle inequality, we see that
\[ \big(\mathbb{E} |f_\varepsilon (x)|^{2(j+1)} \big)^{1/(j+1)} \leq \Big(\mathbb{E} \big(\mathrm{Re}\,f_\varepsilon (x)\big)^{2(j+1)} \Big)^{1/(j+1)} + \Big(\mathbb{E} \big(\mathrm{Im}\,f_\varepsilon (x)\big)^{2(j+1)} \Big)^{1/(j+1)}. \]
Note that
\[\mathrm{Re}\,f_\varepsilon (\omega, x) = \langle q(\omega), \mathrm{Re}\,\mathcal{J}_{-s} \varphi_\varepsilon (x - \centerdot) \rangle, \qquad \mathrm{Im}\,f_\varepsilon (\omega, x) = \langle q(\omega), \mathrm{Im}\,\mathcal{J}_{-s} \varphi_\varepsilon (x - \centerdot) \rangle;\]
and
\begin{equation}
\mathbb{E}\mathrm{Re}\,f_\varepsilon (x) = \mathbb{E}\mathrm{Im}\,f_\varepsilon (x) = 0
\label{id:vanishing_mean}
\end{equation}
since $\mathbb{E} q = 0$. %Letting the variances to be denoted by
%\[\sigma_1(x,\varepsilon)^2 = \mathbb{E}\big(\mathrm{Re}\,f_\varepsilon (x)\big)^2, \qquad \sigma_2(x,\varepsilon)^2 = \mathbb{E}\big(\mathrm{Im}\,f_\varepsilon (x)\big)^2, \]
%we see, b
Using that $\mathrm{Re}\,f_\varepsilon (x)$ and $\mathrm{Im}\,f_\varepsilon (x)$ are Gaussian for every $x \in \R^n$, the identity \eqref{id:vanishing_mean} and Lemma \ref{lem:evenMOMENTUM}, we can check that
\[ \big(\mathbb{E} |f_\varepsilon (x)|^{2(j+1)} \big)^{1/(j+1)} \leq C \Big( \mathbb{E} \big(\mathrm{Re}\,f_\varepsilon (x)\big)^2 + \mathbb{E} \big(\mathrm{Im}\,f_\varepsilon (x)\big)^2 \Big) = C \mathbb{E} |f_\varepsilon (x)|^2 \]
for $C=C(j)$. Finally, plugging the previous inequality into \eqref{es:fromLp}, we get the estimate in the lemma.
\end{proof}

It is an immediate consequence of the previous lemma that
\begin{equation}
\mathbb{E} \| f_\varepsilon \|^p_{L^p(K)} = \int_K \mathbb{E} | f_\varepsilon (x) |^p \, dx \leq C \int_K \big( \mathbb{E} | f_\varepsilon (x) |^2 \big)^{p/2} \, dx \label{es:post_lemma}
\end{equation}
for every compact $K \subset \R^n$. On the other hand, we have the identity
\begin{align*}
\mathbb{E} | f_\varepsilon (x) |^2 &= \mathbb{E}\big(\langle q, \mathcal{J}_{-s}(\varphi_\varepsilon (x - \centerdot)) \rangle \langle q, \overline{\mathcal{J}_{-s}(\varphi_\varepsilon (x - \centerdot))} \rangle\big) \\
&= \langle C_q \mathcal{J}_{-s}(\varphi_\varepsilon (x - \centerdot)), \overline{\mathcal{J}_{-s}(\varphi_\varepsilon (x - \centerdot))} \rangle\\
&= \langle \mathcal{J}_{-s} C_q \mathcal{J}_{-s}(\varphi_\varepsilon (x - \centerdot)), \varphi_\varepsilon (x - \centerdot) \rangle.
\end{align*}
Note that $\mathcal{J}_{-s} C_q \mathcal{J}_{-s}$ is pseudo-differential operator of order $-m+2s$ with a symbol $\tilde{c} \in {\mathcal S}^{-m+2s} (\R^n \times \R^n)$ and Schwartz kernel
\[\tilde{K}(y, z) = \frac{1}{(2\pi)^{n/2}} \mathcal{F}^{-1} \big( \tilde{c}(y, \centerdot) \big)(y - z).\]
For $s < (m - n)/2 $, we can check that
\[\mathbb{E} | f_\varepsilon (x) |^2 = \frac{1}{(2\pi)^n} \int_{\R^n} \int_{\R^n} \Big(\int_{\R^n} e^{i\varepsilon (z - y) \cdot \xi} \tilde{c} (x - \varepsilon y, \xi) \,d\xi \Big) \varphi(y) \varphi(z)  \, dy \, dz\]
and consequently that
\begin{equation}
\mathbb{E} | f_\varepsilon (x) |^2 \leq C \int_{\R^n} (1 + |\xi|)^{-m+2s} \,d\xi.
\label{es:uniformBOUNDforEPS}
\end{equation}
Therefore, by \eqref{es:uniformBOUNDforEPS} and \eqref{es:post_lemma} we have that, for every compact $K$ in $\R^n$, there exists a constant $C$ independent of $\varepsilon$ such that
\begin{equation}
\mathbb{E} \| f_\varepsilon \|^p_{L^p(K)} \leq C,
\label{es:unformLpbound}
\end{equation}
for all $\varepsilon \in (0, 1]$.

\begin{lemma}\sl For $1\leq p < \infty $ and $s < (m - n)/2$ with $n - 1 < m \leq n + 1$, we have that
\begin{equation*}
	\mathcal{J}_{-s} q \in L^p_\mathrm{loc} (\R^n)
\end{equation*}
almost surely.
\end{lemma}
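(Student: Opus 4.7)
The plan is to upgrade the uniform-in-$\varepsilon$ expectation bound \eqref{es:unformLpbound} to an almost sure $L^p_{\mathrm{loc}}$ statement about $\mathcal{J}_{-s}q$. Since compact sets $K\subset\R^n$ are bounded, $L^r(K)\subset L^1(K)$ for every $r\geq 1$, so it suffices to handle a fixed $p\in(1,\infty)$, where $L^p(K)$ is reflexive; rerunning the argument for each such $p$ covers the range $p\in(1,\infty)$, and the $p=1$ case follows by inclusion on compacta.

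I would fix an arbitrary sequence $\varepsilon_j\to 0^+$. Combining Tonelli's theorem with Fatou's lemma and \eqref{es:unformLpbound} yields
$$\mathbb{E}\Big[\liminf_{j\to\infty}\|f_{\varepsilon_j}\|^p_{L^p(K)}\Big]\;\leq\;\liminf_{j\to\infty}\mathbb{E}\|f_{\varepsilon_j}\|^p_{L^p(K)}\;\leq\;C,$$
so on a set $\Omega_K$ with $\mathbb{P}(\Omega_K)=1$ the random variable $\liminf_j\|f_{\varepsilon_j}(\omega)\|_{L^p(K)}$ is finite. For each $\omega\in\Omega_K$ I would pass to an $\omega$-dependent subsequence along which $\|f_{\varepsilon_{j_k}}(\omega)\|_{L^p(K)}$ stays bounded, and then invoke the Banach--Alaoglu theorem together with reflexivity of $L^p(K)$ to extract a further subsequence converging weakly, $f_{\varepsilon_{j_{k_\ell}}}(\omega)\rightharpoonup g(\omega)$ in $L^p(K)$, to some $g(\omega)\in L^p(K)$.

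The key step is identifying the weak limit $g(\omega)$ with the restriction of $\mathcal{J}_{-s}q(\omega)$ to $\mathrm{int}\,K$. By the definition \eqref{id:def_of_fepsilon}, $f_\varepsilon(\omega,\cdot)$ arises from pairing $\mathcal{J}_{-s}q(\omega)$ with the mollifier family $\{\varphi_\varepsilon(x-\cdot)\}$, and hence $f_\varepsilon(\omega)\to\mathcal{J}_{-s}q(\omega)$ in $\mathcal{D}'(\R^n)$ for $\mathbb{P}$-almost every $\omega$. Consequently, for every $\phi\in C^\infty_0(\mathrm{int}\,K)$,
$$\int_K f_{\varepsilon_{j_{k_\ell}}}(\omega,x)\,\phi(x)\,dx\;\longrightarrow\;\langle\mathcal{J}_{-s}q(\omega),\phi\rangle,$$
while weak $L^p(K)$-convergence simultaneously forces this integral to tend to $\int_K g(\omega,x)\phi(x)\,dx$. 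Thus $\mathcal{J}_{-s}q(\omega)$ coincides with $g(\omega)\in L^p(K)$ on $\mathrm{int}\,K$. Exhausting $\R^n$ by $K_\ell=\overline{B(0,\ell)}$ and intersecting the full-measure sets $\Omega_{K_\ell}$ then delivers $\mathcal{J}_{-s}q\in L^p_{\mathrm{loc}}(\R^n)$ almost surely.

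The main obstacle is that the weakly convergent subsequence in $L^p(K)$ depends on $\omega$, so one cannot hope for a global extraction. This is precisely why the identification via distributional convergence is indispensable: the mollification $f_\varepsilon(\omega)\to\mathcal{J}_{-s}q(\omega)$ converges pathwise along the \emph{entire} family $\varepsilon\to 0^+$, which pins down the weak limit of every subsequence without requiring an $\omega$-uniform choice.
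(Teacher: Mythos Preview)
Your argument is correct and takes a genuinely different route from the paper's. The paper applies Banach--Alaoglu in the \emph{product} space $L^p(\Omega\times K)$, extracting a single subsequence $\varepsilon_j\to 0$ (independent of $\omega$) along which $f_{\varepsilon_j}\rightharpoonup f$ weakly in $L^p(\Omega\times K)$; it then identifies $f$ with $\mathcal{J}_{-s}q$ by testing against products $\phi(x)\psi(\omega)$ with $\psi$ simple, which requires verifying the limit \eqref{lim:distributional} through a dominated convergence argument built on the a~priori distribution bound \eqref{in:def-distribution}. You instead work pathwise: Fatou's lemma turns the uniform expectation bound into an almost sure $\liminf$ bound, after which the $\omega$-dependent Banach--Alaoglu extraction in $L^p(K)$ and the already-known distributional convergence of the full mollified family pin down the limit directly. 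Your approach is more elementary in that it entirely bypasses the dominated convergence step and the appeal to \eqref{in:def-distribution}; the price is that you do not obtain a jointly measurable representative in $L^p(\Omega\times K)$, but the stated lemma is purely a pathwise assertion and does not require this.
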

\begin{proof}
The case $p = 1$ follows from the case $1 < p < \infty$ by H\"older's inequality. The case $1 < p < \infty$ is a consequence of \eqref{es:unformLpbound}. Indeed, let $K$ be an arbitrary compact set in $\R^n$. The bound \eqref{es:unformLpbound} implies, by the Banach-Alaoglu theorem and since $L^p (\Omega \times K)$ with $1 < p < \infty$ is reflexive, that there exist $f \in L^p (\Omega \times K)$ and a vanishing sequence $\{ \varepsilon_j \}_{j=1}^\infty$, such that, $f_{\varepsilon_j}$ converges weakly to $f$ as $j$ goes to infinity. In particular, we have
\begin{equation}
\lim_{j\to\infty} \int_\Omega \int_{\R^n} f_{\varepsilon_j}(\omega, x) \phi(x) \psi(\omega) \,dx \,d\mathbb{P}(\omega) = \int_\Omega \int_{\R^n} f(\omega, x)  \phi(x) \psi(\omega) \,dx \,d\mathbb{P}(\omega)
\label{lim:weak}
\end{equation}
for every $\phi \in C^\infty_0(\R^n; \R)$ such that  $\supp (\phi) \subset K$ and every simple function $\psi : \Omega \to \R$.

Next, we show that
\begin{equation}
\lim_{\varepsilon \to 0} \int_\Omega \langle f_\varepsilon (\omega), \phi \rangle \psi(\omega) \,d\mathbb{P}(\omega) = \int_\Omega \langle \mathcal{J}_{-s} q(\omega), \phi \rangle \psi(\omega) \,d\mathbb{P}(\omega)
\label{lim:distributional}
\end{equation}
for every $\phi \in C^\infty_0(\R^n; \R)$ such that $\supp (\phi) \subset K$ and every simple function $\psi : \Omega \to \R$. The limit \eqref{lim:distributional} holds by the dominate convergence theorem since $f_\varepsilon (\omega)$ tends to $\mathcal{J}_{-s} q(\omega)$ in $\mathcal{D}'(\R^n)$ as $\varepsilon$ goes to $0$ almost surely and $|\langle f_\varepsilon (\omega), \phi \rangle |$ is bounded, for all $\varepsilon$, by an integrable function in $\Omega$. To check this last point, note that
\begin{equation*}
\langle f_\varepsilon (\omega), \phi \rangle = \Big\langle q (\omega), \chi \int_{\R^n} \varphi_\varepsilon(x) \mathcal{J}_{-s} \phi(x + \centerdot) \, dx \Big\rangle
\end{equation*}
with $\chi \in C^\infty_0(\R^n)$ such that $\chi(x) = 1$ for all $x \in \overline{D}$, and by \eqref{in:def-distribution},
\begin{equation*}
|\langle f_\varepsilon (\omega), \phi \rangle | \leq C(\omega) \sum_{|\alpha| \leq N} \sup_{y \in \R^n} \bigg| \partial^\alpha \big( \chi(y) \int_{\R^n} \varphi_\varepsilon(x) \mathcal{J}_{-s} \phi(x + y) \, dx \big) \bigg|,
\end{equation*}
where the second term on the right-hand side is bounded by a constant independent of $\varepsilon$ and $C=C(\omega)$ is integrable in $\Omega$.

Therefore, by the density of the simple functions and the limits \eqref{lim:weak} and \eqref{lim:distributional}, we have that
for $\mathbb{P}$-almost every $\omega\in\Omega$ we have
\begin{equation*}
\langle \mathcal{J}_{-s} q(\omega), \phi \rangle = \int_{\R^n} f(\omega, x)  \phi(x) \,dx
\end{equation*}
for any $\phi \in C^\infty_0(\R^n; \R)$ such that $\supp(\phi) \subset K$.
This concludes the proof of the lemma.
\end{proof}

In order to conclude the regularity of the realizations of $q$, choose $\chi \in C^\infty_0 (\R^n)$ such that $\chi (x) = 1$ for all $x \in \overline{D}$ and write
\[q(\omega) = \mathcal{J}_s (\chi \mathcal{J}_{-s} q(\omega)) + \mathcal{J}_s \big((1 - \chi) \mathcal{J}_{-s} q(\omega)\big). \]
The first term belongs to $L^p_s (\R^n)$ a.s. by the previous lemma while the second one belongs to $L^p_t (\R^n)$ a.s. for every $t \in \R$ and every $1 < p < \infty$ because the supports of $1 - \chi$ and $q(\omega)$ are disjoints. This ends the proof of Proposition \ref{prop:SobolevRegularity}.

\subsection{Covariance function}
We conclude this section providing a more detailed description of the covariance function. This will be proved in the next proposition and used at the end of the section \ref{subsec:2nd order}.

\begin{proposition}
\label{prop:correlation_function} \sl
Let $q$ be a microlocally isotropic random potential of order $-m $ in $D$. The covariance function $K_q$ has the following form:
\begin{itemize}
\item[(a)] If $n < m \leq n + 1$, there exists a compactly supported function $F_{1 + \alpha}\in C^{1,\alpha} (\R^n \times \R^n)$ for $ 0< \alpha < m - n$ such that
\[K_q (x, y) = c_{n,m}\, \mu(x) |x - y|^{m - n} + F_{1+\alpha}(x,y).\]
where $c_{n,m} $ is a constant depending on $n$ and $m$.

\item[(b)] If $n - 1 < m \leq n $, there exists a compactly supported function $F_\alpha \in C^{0,\alpha} (\R^n \times \R^n)$ for $0< \alpha < m - (n - 1)$ such that
\begin{equation*}
K_q (x, y) = 
\begin{cases}
c_{n,m}\, \mu(x) |x - y|^{-(n - m)} + F_\alpha(x,y),  & \text{if } n - 1 < m < n, \\
c_{n,m}\, \mu(x) \log |x - y| + F_\alpha(x,y),  & \text{if } m = n.
\end{cases}
\end{equation*}
where $c_{n,m} $ is a constant depending on $n$ and $m$.
\end{itemize}
\end{proposition}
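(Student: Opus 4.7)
My plan is to read off the singularity of $K_q$ directly from the symbolic structure of $C_q$. Starting from~\eqref{id:kernel} and~\eqref{id:isotropic}, I fix a smooth radial cutoff $\chi\in C^\infty(\R^n)$ with $\chi(\xi)=1$ for $|\xi|\geq 1$ and $\chi(\xi)=0$ for $|\xi|\leq 1/2$, and split the symbol as
\begin{equation*}
c_q(x,\xi) = \chi(\xi)\,\mu(x)\,|\xi|^{-m} + r(x,\xi),
\end{equation*}
where $r$ coincides with $a$ from~\eqref{id:isotropic} on $|\xi|\geq 1$ and is smooth and compactly supported in $\xi$ otherwise, so that $r\in\mathcal{S}^{-m-1}(\R^n\times\R^n)$ globally. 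By linearity, $K_q$ decomposes into a principal piece coming from $\chi(\xi)\mu(x)|\xi|^{-m}$ plus a remainder kernel coming from $r$.

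For the principal piece I would invoke the classical Riesz-kernel formulas for homogeneous tempered distributions: since $(1-\chi(\xi))|\xi|^{-m}$, interpreted via analytic continuation in $m$, is a compactly supported tempered distribution whose inverse Fourier transform is a Schwartz function $S$, one obtains
\begin{equation*}
\mathcal{F}^{-1}\bigl(\chi(\cdot)|\cdot|^{-m}\bigr)(z) = c'_{n,m}\,|z|^{m-n} + S(z)
\end{equation*}
for $m\neq n$, with the power replaced by a constant multiple of $\log|z|$ at $m=n$. The constant $c'_{n,m}$ is the usual $\Gamma$-function expression, finite on our range $m\in(n-1,n+1]\setminus\{n\}$, and this directly furnishes the singular term $c_{n,m}\mu(x)|x-y|^{m-n}$ (respectively $c_{n,m}\mu(x)\log|x-y|$) appearing in the proposition.

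For the remainder kernel $R(x,z):=(2\pi)^{-n/2}\mathcal{F}^{-1}_\xi r(x,\cdot)(z)$ I would perform a Littlewood--Paley decomposition $r=\sum_{j\geq 0} r_j$ with $r_j$ frequency-localized to $|\xi|\sim 2^j$. The standard dyadic estimate $\|\partial_z^\beta R_j(x,\cdot)\|_\infty \lesssim 2^{j(n-m-1+|\beta|)}$, uniformly in $x$, combined with interpolation between pointwise derivative bounds and finite-difference estimates, yields $R\in C^{1,\alpha}_{\mathrm{loc}}$ for $\alpha<m-n$ in case~(a) and $R\in C^{0,\alpha}_{\mathrm{loc}}$ for $\alpha<m-(n-1)$ in case~(b). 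Setting $F(x,y):=\mu(x)(2\pi)^{-n/2}S(x-y)+R(x,x-y)$ gives the decomposition; to secure compact support of $F$ one exploits that $K_q$ and $\mu$ are both supported in $D$, multiplying the singular term $\mu(x)|x-y|^{m-n}$ by a smooth cutoff supported near the diagonal in $y$ and absorbing the resulting smooth correction into $F$.

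The main technical obstacle is the H\"older bookkeeping in the Littlewood--Paley sum, where the dyadic bounds must be balanced carefully to extract the \emph{sharp} exponents $m-n$ and $m-(n-1)$ rather than their integer parts. The remaining ingredients---the homogeneous-distribution computation for the principal term and the verification that $r\in\mathcal{S}^{-m-1}$---are routine applications of standard pseudodifferential machinery.
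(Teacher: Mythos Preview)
Your overall strategy---split $c_q$ into the homogeneous principal piece $\chi(\xi)\mu(x)|\xi|^{-m}$ plus a remainder $r\in\mathcal{S}^{-m-1}$ and treat each separately---matches the paper's, but the tools you use for each piece differ. For the remainder, you run a Littlewood--Paley decomposition with dyadic bounds $\|\partial_z^\beta R_j\|_\infty\lesssim 2^{j(n-m-1+|\beta|)}$ and sum; the paper instead applies the Hausdorff--Young inequality to place $\mathcal{F}^{-1}\big((1-\psi)a(x,\cdot)\big)$ in $L^p_s(\R^n)$ for suitable $p,s$ and then invokes the Sobolev embedding $L^p_s\hookrightarrow C^{k,\alpha}$. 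Both reach the correct H\"older exponents. For the principal piece, you invoke the standard Riesz-kernel identity $\mathcal{F}^{-1}(|\cdot|^{-m})=c'_{n,m}|\cdot|^{m-n}$ via analytic continuation in $m$, whereas the paper handles the three subcases by hand: a direct scaling identity $(1-\psi(\xi))|\xi|^{-m}=(1-\psi(\lambda\xi))|\xi|^{-m}+(\psi(\lambda\xi)-\psi(\xi))|\xi|^{-m}$ for $n<m\leq n+1$, the Gamma-integral representation $|\xi|^{-m}=\tfrac{2^{-m/2}}{\Gamma(m/2)}\int_0^\infty t^{m/2}e^{-t|\xi|^2/2}\,\tfrac{dt}{t}$ combined with the Gaussian Fourier transform for $n-1<m<n$, and a limit $m\uparrow n$ to extract the logarithm at $m=n$. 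Your route is more unified and leans on textbook homogeneous-distribution theory; the paper's is more explicit and self-contained. One small correction: the inverse Fourier transform of the compactly supported distribution $(1-\chi)|\xi|^{-m}$ is smooth (by Paley--Wiener) but in general not Schwartz---it is entire of exponential type and may grow polynomially on $\R^n$. This is harmless here because the final $F$ is cut off to compact support anyway, but you should write ``smooth'' rather than ``a Schwartz function''.
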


\begin{proof}
As a consequence of identities \eqref{id:isotropic} and \eqref{id:kernel}, for a radially symmetric $\psi \in C^\infty_0(\R^n)$ such that $\psi(\xi) = 1$ for $|\xi| \leq 1$, we have
\begin{equation}
\begin{aligned}
(2 \pi)^{n/2} \langle &K_q (x, x - \centerdot), \phi \rangle = \int_{\R^n} \mu(x) (1 - \psi(\xi)) |\xi|^{-m}  \big(\mathcal{F}^{-1}\phi\big) (\xi) \, d\xi \\
& + \int_{\R^n} \mathcal{F}^{-1} \big( (1 - \psi) a(x, \centerdot) \big)(y)  \phi (y) \, dy + \int_{\R^n} \mathcal{F}^{-1} \big( \psi c_q(x, \centerdot) \big) (y)  \phi (y) \, dy.
\end{aligned}
\label{id:kernel,phi}
\end{equation}
Note that the function $v(x,y)=\mathcal{F}^{-1} \big( \psi c_q(x, \centerdot) \big) (y)$ satisfies $v\in \mathcal{S}(\R^n \times \R^n)$ with $\supp(v) \subset D \times \R^n$. On the other hand, $w(x,y) = \mathcal{F}^{-1} \big( (1 - \psi) a(x, \centerdot) \big) (y)$ is smooth and compactly supported in $D$ with respect to the variable $x$. Moreover, using the Hausdorff--Young inequality, we see that for fixed $x\in D$ we have $w(x,\centerdot)\in L^p_s(\R^n)$ for $2 \leq p \leq \infty$ and $s > 0$ such that $s - n/p < m - (n - 1)$. Furthermore, for every $0 < \alpha \leq 1$ and $k = 0, 1$ such that $k + \alpha < m - (n - 1)$ there exists $s > 0$ and $p > n$ with $k + \alpha = s - n/p$. Therefore, by the Sobolev embedding theorem, we have $w(x,\centerdot) \in C^{k, \alpha}(\R^n)$.

If $n < m \leq n + 1$, the first term on the right-hand side of \eqref{id:kernel,phi} coincides with
\[ \mu(x) \int_{\R^n} \mathcal{F}^{-1}\big( (1 - \psi) |\centerdot|^{-m} \big) (y)  \phi (y) \, dy \]
where $\mathcal{F}^{-1}\big( (1 - \psi) |\centerdot|^{-m} \big) \in L^p_s(\R^n)$ with $2 \leq p \leq \infty$ and $s > 0$ such that $s - n/p < m - n$. Again, by the Sobolev embedding theorem, we can ensure that $\mathcal{F}^{-1}\big( (1 - \psi) |\centerdot|^{-m} \big) \in C^{0, \alpha}(\R^n)$ for $ 0 < \alpha < m - n $. 

Let us now prove that
\[ \mathcal{F}^{-1}\big( (1 - \psi) |\centerdot|^{-m} \big)(y) = c_{n,m} \, |y|^{m - n} + g(y) \]
for some $g \in C^\infty(\R^n)$. The previous identity follows from the identity
\begin{equation}
\begin{aligned}
\mathcal{F}^{-1}\big( (1 - \psi) |\centerdot|^{-m} \big)(\lambda z) =& \lambda^{m - n} \mathcal{F}^{-1}\big( (1 - \psi) |\centerdot|^{-m} \big)(z)\\
& + \frac{1}{(2\pi)^{n/2}} \int_{\R^n} e^{i \lambda z \cdot \xi} |\xi|^{-m} (\psi(\lambda \xi) - \psi(\xi) ) \,d\xi
\end{aligned} \label{id:rrrr}
\end{equation}
with $z=y/|y|$, $\lambda = |y|$ and
\[c_{n,m} = \mathcal{F}^{-1}\big( (1 - \psi) |\centerdot|^{-m} \big)(y/|y|). \]
Recall that $\psi$ is radially symmetric and hence $c_{n,m}$ is independent of $y$.
The identity \eqref{id:rrrr} is, in turn, an immediate consequence of
\[(1 - \psi(\xi)) |\xi|^{-m} = (1 - \psi(\lambda \xi)) |\xi|^{-m} + (\psi(\lambda\xi)- \psi(\xi)) |\xi|^{-m}.\]
Thus, in the case $n < m \leq n + 1$ the identity \eqref{id:kernel,phi} becomes
\[(2 \pi)^{n/2} \langle K_q (x, x - y), \phi(y) \rangle = \int_{\R^n} (c_{n,m}\, \mu(x) |y|^{m - n} + g(x,y)) \phi(y) \, dy\]
for some $g \in C^{1,\alpha}(\R^n \times \R^n)$ with compact support with respect to the variable $x$. Since $K_q$ is supported on $D \times D$, we can just introduce an appropriate compactly supported smooth function in the previous identity and write the identity
\[K_q (x, y) = c_{n,m}\, \mu(x) |x - y|^{m - n} + F_{1+\alpha}(x,y)\]
with $c_{n,m} $ and $F_{1+\alpha}$ as in the (a). This concludes the proof of statement (a).

Consider the case (b) with $n - 1 < m < n $. Let us first record two useful identities. First, we can write
\begin{equation}
|\xi|^{-m} = \frac{2^{-m/2}}{\Gamma(m/2)} \int_0^\infty t^{m/2} e^{-t |\xi|^2/ 2} \, \frac{dt}{t},
\label{id:gamma}
\end{equation}
where $\Gamma$ stands for
\[\Gamma (\beta) = \int_0^\infty t^\beta e^{-t} \, \frac{dt}{t}. \]
Second, the basic properties of the Fourier transform yield the identity
\begin{equation}
\int_{\R^n} e^{-t |\xi|^2/2} (1 - \psi(\xi)) \mathcal{F}^{-1}\phi (\xi) \, d\xi = t^{-n/2} \int_{\R^n} e^{- |y|^2/(2t)} \mathcal{F}\big((1 - \psi) \mathcal{F}^{-1}\phi \big)(y) \, dy.
\label{id:gaussianfourier}
\end{equation}
Using \eqref{id:gamma} and \eqref{id:gaussianfourier} on the first term on the right-hand side of \eqref{id:kernel,phi} we have
\begin{equation}
\begin{aligned}
\int_{\R^n} \mu(x) (1 &- \psi(\xi)) |\xi|^{-m}  \mathcal{F}^{-1}\phi (\xi) \, d\xi \\
& = \frac{2^{-m/2}}{\Gamma(m/2)}  \mu(x) \int_0^\infty t^{m/2 - n/2} \int_{\R^n} e^{- |y|^2/(2t)} \mathcal{F}\big((1 - \psi) \mathcal{F}^{-1}\phi \big)(y) \, dy \, \frac{dt}{t} \\
& = \frac{2^{-m + n/2}}{\Gamma(m/2)} \Gamma(n/2 - m/2)  \mu(x) \int_{\R^n} |y|^{m - n} \mathcal{F}\big((1 - \psi) \mathcal{F}^{-1}\phi \big)(y) \, dy.
\end{aligned}
\label{id:related2HLS}
\end{equation}
In the last identity we have used the definition of $\Gamma$. Since
\[\int_{\R^n} |y|^{m - n} \mathcal{F}\big(\psi \mathcal{F}^{-1}\phi \big)(y) \, dy = \int_{\R^n} \left( \frac{1}{(2 \pi)^{n/2}} \int_{\R^n} |y|^{m - n} \mathcal{F}^{-1}\psi (z - y) \, dy \right) \phi(z) \, dz,\]
we can deduce from \eqref{id:related2HLS} and \eqref{id:kernel,phi}, proceeding as before, that
\[K_q (x, y) = c_{n,m}\, \mu(x) |x - y|^{m - n} + F_\alpha(x,y)\]
with $c_{n,m} $ and $F_\alpha $ as in the (b).

The case $m=n$ follows from a limit argument, making $m < n$ goes to $n$. To do so, note that
\[\Gamma(n/2 - m/2) = \frac{\Gamma(n/2 - m/2 + 1)}{n/2 - m/2},\]
which is a consequence of the definition of $\Gamma$, and rewrite \eqref{id:related2HLS} as
\begin{align*}
\int_{\R^n} \mu(x) (1 &- \psi(\xi)) |\xi|^{-m}  \mathcal{F}^{-1}\phi (\xi) \, d\xi \\
& = \frac{2^{-m + n/2}}{\Gamma(m/2)} \Gamma(n/2 - m/2 + 1)  \mu(x) \int_{\R^n} \frac{|y|^{m - n} - 1}{n/2 -m/2} \mathcal{F}\big((1 - \psi) \mathcal{F}^{-1}\phi \big)(y) \, dy \\
& \quad + \frac{2^{-m + n/2}}{\Gamma(m/2)} \Gamma(n/2 - m/2)  \mu(x) \int_{\R^n} \mathcal{F}\big((1 - \psi) \mathcal{F}^{-1}\phi \big)(y) \, dy.
\end{align*}

On the other hand, since $(|y|^{-(n-m)} - 1)/(n-m) \longrightarrow \log |y|^{-1}$ as $m \rightarrow n$, we can use the dominate convergence theorem to pass to the limit and obtain
\begin{align*}
\int_{\R^n} \mu(x) (1 - \psi(\xi)) |\xi|^{-n}  \mathcal{F}^{-1}\phi (\xi) \, d\xi =& c_{n,m}\, \mu(x) \int_{\R^n} \log |y|^{-1} \mathcal{F}\big((1 - \psi) \mathcal{F}^{-1}\phi \big)(y) \, dy \\
& + \tilde{c}_{n,m}\, \mu(x) \int_{\R^n} \mathcal{F}\big((1 - \psi) \mathcal{F}^{-1}\phi \big)(y) \, dy
\end{align*}
for $c_{n,m}$ and $\tilde{c}_{n,m}$ positive constants. Checking that the identities
\begin{align*}
\int_{\R^n} \log |y| \mathcal{F}\big(\psi \mathcal{F}^{-1}\phi \big)(y) \, dy &= \int_{\R^n} \left( \frac{1}{(2 \pi)^{n/2}} \int_{\R^n} \log |y| \mathcal{F}^{-1}\psi (z - y) \, dy \right) \phi(z) \, dz, \\
\int_{\R^n} \mathcal{F} \big((1 - \psi) \mathcal{F}^{-1}\phi \big)(y) \, dy &= \int_{\R^n} \left( 1 - \frac{1}{(2 \pi)^{n/2}} \int_{\R^n} \mathcal{F}^{-1}\psi (z - y) \, dy \right) \phi(z) \, dz
\end{align*}
hold and proceeding as before we have
\[K_q (x, y) = c_{n,m}\, \mu(x) \log |x - y| + F_\alpha(x,y)\]
with $c_{n,m} $ and $F_\alpha $ as in the (c), which ends the proof of this proposition.
\end{proof}

\section{Direct scattering for a rough potential}\label{sec:direct-scattering}

In the previous section we established that the realizations of our random potential field model can be rough (Proposition \ref{prop:SobolevRegularity}). Therefore, we need to show that the scattering problem in \eqref{eq:intro_direct}
is well-defined. To achieve this, we leave the randomness aside for a while and consider the deterministic scattering problem
\begin{equation}
\label{eq:direct_problem}
\left\{
\begin{aligned}
& (\Delta + k^2 - V) u = 0\\
&u (x) = e^{ik\theta \cdot x} + u_{\rm sc} (x)\\
& \frac{x}{|x|} \cdot \nabla u_{\rm sc}(x) - ik u_{\rm sc}(x) = o(|x|^{-\frac{n - 1}{2}}) \; \textrm{as} \, |x|\rightarrow \infty
\end{aligned}
\right.
\end{equation}
for a rough potential $V$. Later, our plan is to apply these results to the random scattering scenario for pointwise values $q=q(\omega)$.
%Recall that in the context of scattering theory we use the following terminology: $e^{ik\theta \cdot x}$ is usually called the \textit{incident wave}, $u_{\rm sc}$ the \textit{scattered wave} and $u$ the \textit{total wave}. 
The condition satisfied by $u_{\rm sc}$ at infinity is usually referred as the \textit{outgoing Sommerfeld radiation condition} (SRC for short). Here we assume $V$ to be in $L^p_{-s}(\R^n)$ with $0 < s \leq 1/2$ and $n/s \leq p < \infty$ and to have support in a bounded domain $D$ in $\R^n$. Note that the limiting case $V \in L^\infty(\R^n)$ (corresponding to $s\to 0$) is classical.

We will construct the scattered wave to be $u_{\rm sc} = \sum_{j = 1}^\infty u_j$ with
\[
\left\{
\begin{aligned}
& (\Delta + k^2) u_j = V u_{j - 1}\quad j \in \N \setminus \{ 0 \},\\
& u_0 (x) = e^{ik\theta \cdot x}.
\end{aligned}
\right.
\]
Since the function $\xi \mapsto (-|\xi|^2 + k^2)^{-1}$ does not define a temperate distribution, we will introduce a temperate distribution $\Psi^\pm_k$ as the limit of
\[ \xi \longmapsto \frac{1}{-|\xi|^2 + k^2 \pm i \varepsilon} \]
as $\varepsilon > 0$ vanishes. This limit exists and it is given by
\[\langle \Psi^\pm_k, \phi \rangle = \mathrm{p.v.} \int_{\R^n} \frac{\phi(\xi)}{-|\xi|^2 + k^2} \, d\xi \mp i \frac{\pi}{2k} \int_{ |\xi| = k } \phi(\xi) \, d\sigma_k(\xi) \]
where the principal value $\mathrm{p.v.} \int_{\R^n}$ stands for $\lim_{\epsilon \rightarrow 0} \int_{0<\epsilon\leq|k^2 - |\xi|^2|}$ and $d\sigma_k$ denotes the volume form on $\{ \xi\in \R^n : |\xi| = k \}$. Now, we let $\Phi^\pm_k$  denote the inverse Fourier transform of $\Psi^\pm_k$ and $\mathcal{R}^\pm_k$ be defined by
\begin{equation}
\widehat{\mathcal{R}^\pm_k f} = \Psi^\pm_k \widehat{f}
\label{id:multiplier}
\end{equation}
for $f$ in the Schwartz class, hence,
\[\mathcal{R}^\pm_k f = \frac{1}{(2\pi)^{n/2}} \Phi^\pm_k \ast f.\]
Thus, we set $u_j = \mathcal{R}^\pm_k(V u_{j - 1}) = (\mathcal{R}^\pm_k \circ V)^j u_0$, where $\mathcal{R}^\pm_k \circ V$ denotes the \textit{resolvent operator} $\mathcal{R}^\pm_k$ composed with the operator \textit{multiplication-by} $V$.

On the other hand, it is known that $\Phi_k^\pm$ satisfies
\begin{equation}
\frac{x}{|x|} \cdot \nabla \Phi_k^\pm(x) \mp ik \Phi_k^\pm(x) = o(|x|^{-\frac{n - 1}{2}}) \; \textrm{as} \, |x|\rightarrow \infty.
\label{es:SRCoutin}
\end{equation}
Therefore, the scattered wave, which has to satisfy the SRC, will be constructed with $\mathcal{R}^+_k$ as
\begin{equation}
u_{\rm sc} = \sum_{j = 1}^\infty (\mathcal{R}^+_k \circ V)^j u_0
\label{id:neumann_series}
\end{equation}
provided this infinite sum makes sense. 

In the following, we prove that the construction \eqref{id:neumann_series} is well-defined,
by proving boundedness properties for the operators $\mathcal{R}^+_k $ and {multiplication-by} $V$ in the weighted spaces $H^{s,\delta}_k(\R^n)$. The space $H^{s,\delta}_k(\R^n)$ is constructed in the following manner. Let $L^{2, \delta} (\R^n)$, with $\delta \in \R$, denote the equivalence class of measurable functions $f$ in $\R^n$ such that
\[\int_{\R^n} (1 + |x|^2)^\delta |f(x)|^2 \, dx < \infty,\]
and be endowed with the norm
\[ \| f \|_{L^{2, \delta}} = \Big( \int_{\R^n} (1 + |x|^2)^\delta |f(x)|^2 \, dx \Big)^{1/2}. \]
If $\delta = 0$, the space $L^{2,0}(\R^n)$ is just $L^2(\R^n)$.
Let $H^{s,\delta}_k(\R^n)$, with $\delta $ and $s $ in $ \R$, be the space of $f = (k^2 - \Delta)^{-s/2} g$ such that $g \in L^{2,\delta} (\R^n)$, where $(k^2 - \Delta)^{-s/2}$ is defined as the multiplier with symbol $ (k^2 + |\xi|^2)^{-s/2}$. Let this space be endowed with the norm
\[\| f \|_{H^{s, \delta}_k} = \| g \|_{L^{2,\delta}}.\]
When $\delta = 0$, the space $H^{s,0}_k(\R^n)$ will be denoted by $H^s_k(\R^n)$.

The boundedness properties of the operators $\mathcal{R}^\pm_k $ and {multiplication-by} $V$ needed for our purposes, will be studied in the sections \ref{sec:resolvent} and \ref{sec:multiplication} respectively, and stated here as follows:
\begin{theorem}\label{thm:relosvent} \sl The operator $\mathcal{R}^\pm_k $ is bounded from $H^{-s, \delta}_k (\R^n)$ to $H^{s, -\delta}_k (\R^n)$ with $0 \leq s \leq 1/2$ and $\delta > 1/2$ and satisfies the inequality
\[\| \mathcal{R}^\pm_k f \|_{H^{s, -\delta}_k} \lesssim k^{-(1 - 2s)} \| f \|_{H^{-s, \delta}_k}. \]
\end{theorem}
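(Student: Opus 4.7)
The strategy is to establish the two endpoint estimates at $s=0$ and $s=1/2$ separately, and then recover the full range $0 \le s \le 1/2$ by complex interpolation of an analytic family of operators.

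\emph{Endpoint $s=0$.} This is Agmon's classical weighted resolvent estimate
\[
\| \mathcal{R}^\pm_k f \|_{L^{2,-\delta}} \lesssim k^{-1} \| f \|_{L^{2,\delta}}, \qquad \delta > 1/2.
\]
Using the decomposition $\Psi^\pm_k = \mathrm{p.v.}\, (k^2-|\xi|^2)^{-1} \mp (i\pi/2k)\, \sigma_k$ given in the text, the surface-measure piece gives the operator $f \mapsto (2k)^{-1} \int_{|\xi|=k} e^{ix\cdot\xi} \widehat{f}(\xi)\, d\sigma_k(\xi)$. Its $L^{2,\delta} \to L^{2,-\delta}$ boundedness is the standard trace/restriction estimate (dual to Agmon's trace theorem), which holds for $\delta > 1/2$, and the explicit factor $(2k)^{-1}$ supplies the claimed $k^{-1}$ decay. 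For the principal-value piece I would write $\mathcal{R}^\pm_k$ in polar coordinates $\xi = r\omega$, isolate the Cauchy-type singularity at $r = k$, and estimate by decomposing dyadically in $|r-k|$; the standard oscillatory-integral arguments yield the same $k^{-1}$ prefactor.

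\emph{Endpoint $s=1/2$.} The target is the uniform-in-$k$ bound
\[
\| \mathcal{R}^\pm_k f \|_{H^{1/2,-\delta}_k} \lesssim \| f \|_{H^{-1/2,\delta}_k}.
\]
Since $(k^2-\Delta)^{z/2}$ is a pure Fourier multiplier, this is equivalent to the uniform $L^{2,\delta} \to L^{2,-\delta}$ boundedness of the multiplier with symbol
\[
m_k(\xi) = \frac{(k^2+|\xi|^2)^{1/2}}{k^2 - |\xi|^2 \pm i0}.
\]
The key observation is that on the singular sphere $|\xi|=k$ one has $(k^2+|\xi|^2)^{1/2} = \sqrt{2}\, k$, so the surface-measure contribution of $m_k$ is $\sqrt{2}\, k \cdot (2k)^{-1}$, a bound \emph{independent of $k$}. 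Off the sphere, $m_k$ is dominated by $C(k + |\xi|)/(k^2-|\xi|^2)$ plus something bounded, and the associated principal-value estimate follows from a refinement of the argument used in the $s=0$ step, picking up the extra $k$ from the numerator that exactly cancels the gain $k^{-1}$ we exploited before. This is the uniform Sobolev-type (Kenig-Ruiz-Sogge style) estimate for the resolvent.

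\emph{Interpolation and main obstacle.} Define, for $z$ in the strip $0 \le \mathrm{Re}\, z \le 1/2$, the analytic family of operators
\[
T_z := (k^2-\Delta)^{z/2}\, \mathcal{R}^\pm_k \, (k^2-\Delta)^{z/2} .
\]
By the two endpoint estimates, $T_z : L^{2,\delta} \to L^{2,-\delta}$ has operator norm $\lesssim k^{-1}$ on $\mathrm{Re}\, z = 0$ and $\lesssim 1$ on $\mathrm{Re}\, z = 1/2$, with admissible polynomial growth in $|\mathrm{Im}\, z|$. Stein's complex interpolation theorem then gives, on the line $\mathrm{Re}\, z = s$, the operator-norm bound $\lesssim k^{-(1-2s)}$. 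Unwinding the definition of $H^{\pm s,\delta}_k$ yields precisely the stated estimate. The main obstacle is the uniform-in-$k$ control in the $s=1/2$ step: one has lost the $k^{-1}$ gain enjoyed by Agmon's bound and must instead rely on the exact cancellation between the Sobolev weights and the geometry of the sphere $|\xi|=k$ to absorb the singularity; carrying this out carefully, in particular tracking both the surface and principal-value contributions on the same footing, is the delicate part of the argument.
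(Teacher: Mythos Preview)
Your strategy is sound, but it differs from the paper's route. The paper does \emph{not} prove two endpoints and interpolate. Instead it establishes a single, stronger, uniform-in-$k$ estimate
\[
\|\mathcal{R}^\pm_k f\|_{H^{1,-\delta}_k}\ \lesssim\ \|f\|_{L^{2,\delta}},
\]
by splitting $\mathcal{R}^\pm_k=\mathcal{P}_k\mp i(\pi/2)\mathcal{Q}_k$ into its principal-value and surface-measure pieces; the surface piece is handled by restriction/extension (H\"ormander's Theorem 7.1.26 and its dual), while the principal-value piece is reduced, after rescaling and a Littlewood--Paley decomposition of $f$, to a lemma of Kenig--Ponce--Vega controlling the localized $H^1$-norm of $\mathcal{P}_1 f$. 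From this one estimate the full range $0\le s\le 1/2$ follows by the elementary weighted embedding of Proposition~\ref{prop:st}, namely $\|g\|_{H^{s,-\delta}_k}\lesssim k^{-(1-2s)}\|g\|_{H^{1-s,-\delta}_k}$, after first commuting $(k^2-\Delta)^{-s/2}$ through $\mathcal{R}^\pm_k$ to pass from $L^{2,\delta}\to H^{1,-\delta}_k$ to $H^{-s,\delta}_k\to H^{1-s,-\delta}_k$. No interpolation is used.

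Comparing the two: the paper's $H^1$ estimate already implies both of your endpoints (Agmon's bound by dropping regularity, and your $s=1/2$ bound by shifting by $(k^2-\Delta)^{-1/4}$), so the hard analytic work is concentrated in one place and the $k^{-(1-2s)}$ emerges from a trivial embedding rather than from tracking bounds across a strip. Your interpolation route would work, but it requires an extra ingredient you pass over quickly: boundedness of the imaginary Bessel potentials $(k^2-\Delta)^{it}$ on the weighted spaces $L^{2,\pm\delta}$ with at most polynomial growth in $|t|$. This is true (it is a zeroth-order pseudodifferential calculus fact, close in spirit to the paper's Proposition~\ref{prop:st}), but it is a genuine technical step that the paper's direct argument avoids entirely. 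Your description of the $s=1/2$ principal-value estimate is also only a sketch; carrying it out would essentially force you to redo the Kenig--Ponce--Vega type analysis that the paper performs anyway.
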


\begin{proposition} \label{prop:multiplication} \sl If $V $ belong either $ L^p_{-s}(\R^n)$ with $0 < s \leq 1/2$ and $ n/s \leq p < \infty$, then the operator {multiplication-by} $V$ is bounded from $H^{s, -\delta}_k (\R^n)$ to $H^{-s, \delta}_k (\R^n)$ and satisfies
\[\| V f \|_{H^{-s, \delta}_k} = o\Big( \| f \|_{H^{s, -\delta}_k}\Big), \]
expressed with the \emph{little o} of Landau.
\end{proposition}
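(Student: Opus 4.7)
The plan is to reduce the claim by duality and then split $V$ into a smooth principal part plus a small rough remainder. Since the $L^2$ pairing identifies $H^{-s,\delta}_k(\R^n)$ with the dual of $H^{s,-\delta}_k(\R^n)$, one has
\[
\|Vf\|_{H^{-s,\delta}_k} = \sup_{\|g\|_{H^{s,-\delta}_k} \le 1} |\langle V, fg \rangle|.
\]
Using the duality pairing $L^p_{-s}\times L^{p'}_s\to\mathbb{C}$ with $1/p+1/p'=1$, this reduces to a uniform bilinear multiplication estimate
\[
\|fg\|_{L^{p'}_s} \lesssim \|f\|_{H^{s,-\delta}_k}\,\|g\|_{H^{s,-\delta}_k},
\]
with constant independent of $k$. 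I would prove it by writing $\|fg\|_{L^{p'}_s}=\|\mathcal{J}_{-s}(fg)\|_{L^{p'}}$ and invoking a fractional Leibniz (Kato--Ponce) rule to distribute $\mathcal{J}_{-s}$ across the product, then applying H\"older together with the Sobolev embedding $H^s(\R^n)\hookrightarrow L^{2n/(n-2s)}(\R^n)$ valid for $0<s\le 1/2 < n/2$. The hypothesis $p\ge n/s$ (equivalently $p'\le n/(n-s)$) is exactly the threshold that makes the resulting H\"older exponents compatible with the Sobolev embedding, while the $k$-dependence is absorbed using $(k^2+|\xi|^2)^{s/2}\ge (1+|\xi|^2)^{s/2}$ for $k\ge 1$ together with a cutoff adapted to $\supp V\subset D$.

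To upgrade boundedness to the claimed $o(1)$ behaviour (implicit as $k\to\infty$), I would use density of $C^\infty_0(\R^n)$ in $L^p_{-s}(\R^n)$ to split, for any $\epsilon>0$,
$V=V_\epsilon+V^\epsilon$ with $V_\epsilon\in C^\infty_0(\R^n)$ and $\|V^\epsilon\|_{L^p_{-s}}<\epsilon$. On the smooth piece I would use the chain
\[
\|V_\epsilon f\|_{H^{-s,\delta}_k} \le k^{-s}\|V_\epsilon f\|_{L^{2,\delta}} \le C_D\|V_\epsilon\|_{L^\infty}\,k^{-s}\|f\|_{L^{2}(D)} \le C_D'\|V_\epsilon\|_{L^\infty}\,k^{-2s}\|f\|_{H^{s,-\delta}_k},
\]
each step invoking, respectively, the elementary multiplier bound $(k^2+|\xi|^2)^{-s/2}\le k^{-s}$, the compact support $V_\epsilon f\subset D$ to trade $L^{2,\delta}(\R^n)$ for $L^2(D)$, and the dual bound $\|f\|_{L^2(D)}\le C_D\|f\|_{L^{2,-\delta}}\le Ck^{-s}\|f\|_{H^{s,-\delta}_k}$. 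Combining with the uniform bound of Step 1 applied to the rough remainder,
\[
\frac{\|Vf\|_{H^{-s,\delta}_k}}{\|f\|_{H^{s,-\delta}_k}} \le C_D'\|V_\epsilon\|_{L^\infty}\,k^{-2s} + C\epsilon,
\]
and sending $k\to\infty$ followed by $\epsilon\to 0$ finishes the proof.

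The main obstacle I expect is the uniform bilinear estimate from the first paragraph: running a Kato--Ponce argument simultaneously against the spatial weights $(1+|x|^2)^{-\delta}$ and the semiclassical scaling $(k^2-\Delta)^{s/2}$ is delicate, since those weights do not commute with the Fourier multiplier. A clean route may be a paraproduct (Littlewood--Paley) decomposition that separates low and high frequencies; the low-frequency portion should yield the desired $k^{-2s}$ smoothing factor for the smooth part of $V$, while the high-frequency portion is absorbed by the Sobolev embedding in the endpoint case $p=n/s$, $s=1/2$. This endpoint case is also where the joint role of the hypotheses $n/s\le p$ and $s\le 1/2$ should become transparent.
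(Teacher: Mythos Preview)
Your approach is correct and essentially the same as the paper's: both reduce by duality to a bilinear estimate $\|fg\|_{L^{p'}_s}\lesssim\|f\|_{H^s}\|g\|_{H^s}$ proved via Kato--Ponce (the paper's Lemma~\ref{lem:KP}), and both obtain the $o(1)$ by splitting $V$ into a regular piece contributing a negative power of $k$ and a small rough remainder. The paper mollifies $W=(\id-\Delta)^{-s/2}V$ at a $k$-dependent scale $\varepsilon=k^{-1/2}$ rather than using density of $C^\infty_0$ in $L^p_{-s}$ with $\epsilon$ independent of $k$, but this is a cosmetic difference and both routes yield only the qualitative $o(1)$.

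Your concern in the final paragraph is unnecessary: the spatial weight is handled not by a weighted Kato--Ponce or a paraproduct decomposition but simply by inserting a cutoff $\chi_D$ near $\supp V\subset D$ before applying the unweighted fractional Leibniz rule, together with the elementary bound $\|\chi_D f\|_{H^s_k}\lesssim\|f\|_{H^{s,-\delta}_k}$ (the paper's Lemma~\ref{lem:compact}). This is exactly the cutoff you already mention in passing, and it closes the bilinear estimate directly with a constant independent of $k$.
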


As a consequence of these statements, there exists $k_0 > 0$, depending only on $\| V \|_{L^p_{-s}}$, $n$, $s$ and $\delta$, such that the operator $\mathcal{R}^\pm_k \circ V$ maps $H^{s, -\delta}_k(\R^n)$ into itself with a norm strictly less than $1$ for $k \geq k_0$. Then, the sum \eqref{id:neumann_series} converges in $H^{s, -\delta}_k(\R^n)$, and $u_{\rm sc}$ can be constructed by the infinite sum.

\begin{remark}
\label{re:k0}
Notice carefully that in the probabilistic problem setting the threshold wavelength $k_0 = k_0(\omega)$ becomes random due to the dependence on the term $\norm{q(\omega)}_{L^p_{-s}}$. However, we clearly have $k_0(\omega)<\infty$ almost surely and as our reconstruction method is based on a single realization of the potential, the randomness poses no problems.
\end{remark}

Finally, by construction, the scattered wave satisfies the Lippmann--Schwinger equation
\begin{equation*}
	%\label{eq:LippmannSchwinger}
	u_{\rm sc}(x) = \int_{\R^n} \Phi_k^+(x-y) V(y) (e^{ik\theta\cdot y} + u_{\rm sc}(y)) dy \quad \text{in} \;
	H^{s,-\delta}_k(\R^n).
\end{equation*}
By the asymptotic behaviour of $\Phi_k^+$ and the fact that $u_{\rm sc}$ solves $(\Delta + k^2) u_{\rm sc} = 0$ in the exterior of a ball containing $D$, we have  that $u_{\rm sc} (x)$ is asymptotically equivalent, as $|x|$ grows, to 
\[k^{(n - 1)/2} \frac{e^{ik|x|}}{|x|^{(n - 1)/2}} u^\infty (k, \theta, x/|x|), \]
where $u^\infty (k, \theta, x/|x|)$ is the \textit{far-field pattern} and can be expressed as
\[u^\infty (k, \theta, x/|x|) = c_n\int_{\R^n} e^{-ik\frac{x}{|x|}\cdot y} V(y) (e^{ik\theta \cdot y} + u_{\rm sc}(y)) \, dy\]
with $c_n$ a constant only depending on $n$. Furthermore, we can conclude that the $u_{\rm sc}$ satisfies the SRC.

\subsection{Resolvent estimates} \label{sec:resolvent}
Start by noting that whenever $f \in \mathcal{S}(\R^n)$, the identity
\[\langle \Psi^\pm_k \widehat{f}, \phi \rangle = \mathrm{p.v.} \int_{\R^n} \frac{\widehat{f}(\xi)\phi(\xi)}{-|\xi|^2 + k^2} \, d\xi \mp i \frac{\pi}{2k} \int_{ |\xi| = k } \widehat{f}(\xi) \phi(\xi) \, d\sigma_k(\xi) \]
holds for every bounded smooth function $\phi$. Thus, according to \eqref{id:multiplier}, we have that
\[\mathcal{R}^\pm_k f (x) = \frac{1}{(2\pi)^{n/2}} \Big( \mathrm{p.v.} \int_{\R^n} \frac{e^{i x\cdot \xi}\widehat{f}(\xi)}{-|\xi|^2 + k^2} \, d\xi \mp i \frac{\pi}{2k} \int_{ |\xi| = k } e^{i x\cdot \xi} \widehat{f}(\xi) \, d\sigma_k(\xi) \Big) \]
for every $f \in \mathcal{S}(\R^n)$. For convenience, let us write
\[\mathcal{P}_k f(x) = \mathrm{p.v.} \int_{\R^n} \frac{e^{i x\cdot \xi}\widehat{f}(\xi)}{-|\xi|^2 + k^2} \, d\xi, \qquad \mathcal{Q}_k f(x) = \frac{1}{k} \int_{ |\xi| = k } e^{i x\cdot \xi} \widehat{f}(\xi) \, d\sigma_k(\xi). \]

The main goal of this section is to prove the inequalities
\begin{align}
\| \mathcal{P}_k f \|_{H^{1, -\delta}_k} &\lesssim \| f \|_{L^{2, \delta}} \label{es:re_part}, \\
\| \mathcal{Q}_k f \|_{H^{1, -\delta}_k} &\lesssim \| f \|_{L^{2, \delta}} \label{es:im_part}
\end{align}
for every $f \in \mathcal{S}(\R^n)$, since Theorem \ref{thm:relosvent} is a consequence of them. Indeed, these inequalities imply that
\begin{align*}
\| \mathcal{R}^\pm_k f \|_{H^{1 - s, -\delta}_k} &= \| \mathcal{R}^\pm_k (k^2 - \Delta)^{-s/2} f \|_{H^{1, -\delta}_k} \\
& \lesssim \| (k^2 - \Delta)^{-s/2} f \|_{L^{2, \delta}} = \| f \|_{H^{-s, \delta}_k}.
\end{align*}
On the other hand, checking that
\begin{equation}
\| \mathcal{R}^\pm_k f \|_{H^{s, -\delta}_k} \lesssim k^{-(1 - 2s)} \| \mathcal{R}^\pm_k f \|_{H^{1 - s, -\delta}_k},
\label{es:gainINk}
\end{equation}
we conclude the estimate
\[\| \mathcal{R}^\pm_k f \|_{H^{s, -\delta}_k} \lesssim k^{-(1 - 2s)} \| f \|_{H^{-s, \delta}_k}, \]
and consequently the theorem. Note that \eqref{es:gainINk} is a simple consequence of the following proposition.

\begin{proposition}\label{prop:st} \sl Let $s,t, \delta$ and $k$ be real numbers such that $s \leq t$. Then,
\[\| f \|_{H^{s,\delta}_k} \lesssim k^{-(t-s)} \| f \|_{H^{t,\delta}_k} \]
for all $f \in \mathcal{S}(\R^n)$.
\end{proposition}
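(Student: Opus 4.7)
Unpacking the definition $\|f\|_{H^{\sigma,\delta}_k} = \|(k^2-\Delta)^{\sigma/2}f\|_{L^{2,\delta}}$, the inequality is equivalent, after setting $h = (k^2-\Delta)^{t/2}f\in\mathcal{S}(\R^n)$, to the operator estimate
\[\|(k^2-\Delta)^{-r}h\|_{L^{2,\delta}} \lesssim k^{-2r}\|h\|_{L^{2,\delta}},\qquad r := (t-s)/2 \geq 0.\]
The case $r=0$ is trivial, so the work reduces to $r>0$.

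The plan is to realise $(k^2-\Delta)^{-r}$ as convolution against a non-negative kernel $G^k$ with $\widehat{G^k}(\xi) = (2\pi)^{-n/2}(k^2+|\xi|^2)^{-r}$, and to exploit the scaling relation $G^k(x) = k^{n-2r}G_1(kx)$, where $G_1$ is the standard Bessel potential kernel of order $2r$. The kernel $G_1$ decays exponentially at infinity and has an integrable singularity at the origin of order at worst $|x|^{2r-n}$, so $\int G_1(w)(1+|w|^2)^N\,dw$ is finite for every $N\geq 0$.

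I would then verify the operator bound by Schur's test in $L^{2,\delta}$. Peetre's inequality
\[(1+|x|^2)^{\delta/2} \leq 2^{|\delta|/2}(1+|x-y|^2)^{|\delta|/2}(1+|y|^2)^{\delta/2}\]
reduces both Schur constants to bounding
\[\int_{\R^n} G^k(z)(1+|z|^2)^{|\delta|/2}\,dz = k^{-2r}\int_{\R^n} G_1(w)\bigl(1 + |w|^2/k^2\bigr)^{|\delta|/2}\,dw,\]
obtained through the change of variables $w=kz$. For $k\geq 1$ the right-hand side is bounded by $Ck^{-2r}$ with $C$ depending only on $n$, $r$ and $\delta$, and Schur's test then delivers the desired operator bound.

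I expect the only real subtlety to be the weighted estimate itself: in unweighted $L^2$ the inequality is immediate from Plancherel since $(k^2+|\xi|^2)^{-r} \leq k^{-2r}$, but the polynomial weight couples nontrivially with the convolution kernel, and Peetre's inequality together with the scaling relation for $G^k$ are what cleanly extract the factor $k^{-2r}$ uniformly in $k\geq 1$.
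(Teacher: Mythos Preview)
Your argument is correct, and it takes a genuinely different route from the paper. The paper reduces to the same operator inequality $\|(k^2-\Delta)^{-s/2}f\|_{L^{2,\delta}}\lesssim k^{-s}\|f\|_{L^{2,\delta}}$ but then proceeds by a dyadic decomposition in physical space: writing $f=\sum_l\chi_l f$ with $\chi_l$ supported on annuli of radius $\sim 2^l$, it splits the double sum over $(j,l)$ into a near-diagonal part $|j-l|\le 2$ (handled by Plancherel) and a far-diagonal part $|j-l|>2$ (handled by repeated integration by parts in $\xi$, exploiting $|x-y|\gtrsim 2^{\max(j,l)}$ to produce arbitrary decay, followed by Young's inequality for discrete convolutions). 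This is a standard pseudodifferential-calculus style argument.

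Your approach instead exploits that $(k^2-\Delta)^{-r}$ is convolution with an \emph{explicit} positive kernel (the Bessel potential), whose scaling law $G^k(x)=k^{n-2r}G_1(kx)$ together with the exponential decay of $G_1$ reduces the weighted bound to a single finite integral via Schur's test and Peetre's inequality. This is shorter and more transparent for this particular operator; the paper's method, by contrast, would generalise to operators that are not Fourier multipliers. Two minor remarks: the local singularity of $G_1$ is $|x|^{2r-n}$ only when $0<2r<n$ (logarithmic when $2r=n$, bounded when $2r>n$), but in every case $\int G_1(w)(1+|w|^2)^N\,dw<\infty$ as you claim; and your restriction to $k\ge 1$ matches the paper's actual usage (the proposition is only ever applied in the regime $k\ge 1$).
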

\begin{proof}
Firstly note that it is enough to prove that for $s, \delta, k \in \R$ with $s \geq 0$ the estimate
\begin{equation}
\| f \|_{H^{-s,\delta}_k} \lesssim k^{-s} \| f \|_{L^{2,\delta}}
\label{es:-s0}
\end{equation}
holds for all $f \in \mathcal{S}(\R^n)$.

In order to prove \eqref{es:-s0}, it will convenient to introduce some notation. Set
\begin{align*}
D_0 &= \{ x \in \R^n : |x|\leq 1 \},\\
D_j &= \{ x \in \R^n : 2^{j - 1} < |x| \leq 2^j \} \qquad j \in \N \setminus \{ 0 \}
\end{align*}
and consider $\chi_0$ a smooth function with values in $[0, 1]$ such that $\supp \chi_0 \subset D_0$ and $\chi_0(x) = 1$ for all $|x| \leq 1/2$. Let $\chi_j$ with $j \in \N \setminus \{ 0 \}$ denote $\chi_j (x) = \chi_0(x/2^{j+1}) - \chi_0(x/2^{j})$ and note that $\supp \chi_j \subset \{  2^{j - 1} \leq |x| \leq 2^{j + 1}\}$ for $j \geq 1$. By construction, $\sum_{j \in \N} \chi_j(x) = 1$ for every $x \in \R^n$.

Using this notation, we can write
\[\| f \|_{H^{-s,\delta}_k}^2 \lesssim \sum_{j \in \N} \Big( \sum_{l \in \N} 2^{j\delta} \|(k^2 - \Delta)^{-s/2}(\chi_l f)\|_{L^2(D_j)} \Big)^2.\]
The sums on the right hand side will be studied separately according to \[\sum_{j \in \N} \Big( \sum_{l \in \N} \cdots \Big)^2 \sim \sum_{j \in \N} \sum_{|l - j|\leq 2} \big(  \cdots \big)^2 + \sum_{j \in \N} \Big( \sum_{|l - j|> 2}  \cdots \Big)^2.\]
Start by the case where, the support of $\chi_l$ and $D_j$ intersect each other:
\begin{equation*}
\sum_{j \in \N} \sum_{|l - j|\leq 2} 2^{2j\delta} \|(k^2 - \Delta)^{-s/2}(\chi_l f)\|^2_{L^2(D_j)} \lesssim \sum_{l \in \N} 2^{2l\delta} \|(k^2 - \Delta)^{-s/2}(\chi_l f)\|^2_{L^2}
\end{equation*}
Using now Plancherel's identity, the right hand side of the previous inequality can be bounded by
\[ k^{-2s} \sum_{l \in \N} 2^{2l\delta} \int_{\R^n} |\chi_l(x) f(x)|^2 \, dx \lesssim k^{-2s} \| f \|_{L^{2,\delta}}^2.\]
Consider now the case where the support of $\chi_l$ and $D_j$ and far from each other. The norm $\| (k^2 - \Delta)^{-s/2}(\chi_l f) \|_{L^2(D_j)}$ will be estimated by duality:
\begin{equation}
\frac{1}{(2 \pi)^n} \int_{\R^n} \int_{\R^n} \int_{\R^n} e^{i(x - y) \cdot \xi} (k^2 + |\xi|^2)^{-s/2}f_l (y) \, dy \, d\xi \, g(x) \, dx
\label{term:L2duality}
\end{equation}
where $g$ is any smooth function compactly supported in $D_j$ and $f_l$ denotes for simplicity $ \chi_l f$. Let $\phi$ be a real-valued smooth function defined on $[0, \infty)$ with compact $\{ 0 \leq t \leq 1/4 \}$ and such that $\phi (t) = 1$ for $ 0 \leq t \leq 1/8$. Then, \eqref{term:L2duality} is equal to
\[\frac{1}{(2 \pi)^n} \int_{\R^n} \int_{\R^n} \int_{\R^n} e^{i(x - y) \cdot \xi} \Big[ 1 - \phi\Big( \frac{|x - y|}{|2^j - 2^l|} \Big)\Big] (k^2 + |\xi|^2)^{-s/2}f_l (y) \, dy \, d\xi \, g(x) \, dx.\]
This holds because if $x \in D_j$ and $y \in \supp \chi_l$ with $|j - l|>2$, then $|x - y| \geq 2^{\max(j,l)} / 4 > |2^j - 2^l| / 4$. Furthermore, using the identity
\[- \frac{\Delta_\xi e^{i(x - y)\cdot \xi}}{|x - y|^2} = e^{i(x - y)\cdot \xi}, \qquad x \neq y\]
($N$ times) and integrating by parts ($2N$ times) in $\xi$, we see that \eqref{term:L2duality} equals
\[-\frac{1}{(2 \pi)^n} \int_{\R^n} \int_{\R^n} \int_{\R^n} e^{i(x - y) \cdot \xi} \frac{1}{|x - y|^{2N}} \Big[ 1 - \phi\Big( \frac{|x - y|}{|2^j - 2^l|} \Big)\Big] \Delta_\xi^N (k^2 + |\xi|^2)^{-s/2}f_l (y) \, dy \, d\xi \, g(x) \, dx.\]
For $N > n / 2$, we have that
\[\Big| \int_{\R^n} e^{i(x - y) \cdot \xi} \Delta_\xi^N (k^2 + |\xi|^2)^{-s/2} \, d\xi \Big| \lesssim k^{-s}, \]
so we can apply Fubini to integrate first in $\xi$ and then apply the Cauchy--Schwarz. Consequently, we have
\begin{equation*}
\Big| \int_{\R^n} \int_{\R^n} \int_{\R^n} e^{i(x - y) \cdot \xi} (k^2 + |\xi|^2)^{-s/2}f_l (y) \, dy \, d\xi \, g(x) \, dx \Big| \lesssim k^{-s} \| \Phi \ast f_l\|_{L^2} \| g \|_{L^2}
\end{equation*}
with
\[\Phi (x) = \frac{1}{|x|^{2N}} \Big[ 1 - \phi\Big( \frac{|x|}{|2^j - 2^l|} \Big)\Big]. \]
This implies, by duality and Young's inequality, that
\begin{equation}
\sum_{j \in \N} \Big( \sum_{|l - j|> 2} 2^{j\delta} \| (k^2 - \Delta)^{-s/2}(\chi_l f) \|_{L^2(D_j)} \Big)^2 \lesssim k^{-2s} \sum_{j \in \N} \Big( \sum_{|l - j|> 2} 2^{j\delta} \| \Phi \|_{L^1} \| \chi_l f \|_{L^2} \Big)^2 
\label{term:withSUM}
\end{equation}
where
\[\| \Phi \|_{L^1} = \frac{1}{|2^j - 2^l|^{2N - n}} \int_{\R^n} \frac{1}{|x|^{2N}} [ 1 - \phi( |x| )] \,dx. \]
The right-hand side of \eqref{term:withSUM} can be bounded as follows
\begin{equation*}
k^{-2s} \sum_{j \in \N} \Big( \sum_{|l - j|> 2}  \frac{2^{(j - l)\delta}}{|2^j - 2^l|^{2N - n}} 2^{l\delta} \| \chi_l f \|_{L^2} \Big)^2 \lesssim k^{-2s}  \sum_{j \in \N} \Big( \sum_{|l - j|> 2}  \frac{1}{2^{|j - l|(2N - n - |\delta|)}} 2^{l\delta} \| \chi_l f \|_{L^2} \Big)^2.
\end{equation*}
To check the previous inequality, it may be convenient to notice that $|2^j - 2^l| > 2^{\max(j,l) - 1}$ for $|l - j|>0$. By Young's inequality for convolutions, the right-hand side can be bounded by
\[k^{-2s} \Big( \sum_{j \in \N} 2^{-j(2N - n - |\delta|)} \Big)^2 \sum_{l\in \N} 2^{2l\delta} \| \chi_l f \|^2_{L^2}. \]
Choosing $N > (n + |\delta|)/2 $ we have that the right-hand side of \eqref{term:withSUM} is bounded by
\[k^{-2s} \sum_{l \in \N} 2^{2l\delta} \| \chi_l f \|^2_{L^2} \lesssim k^{-2s} \| f \|^2_{L^{2,\delta}}.\]
This ends the proof of this lemma.
\end{proof}

We turn our attention to estimates \eqref{es:re_part} and \eqref{es:im_part}:
\paragraph*{\emph{Proof of the inequality \eqref{es:re_part}}} Let $D_j$ with $j \in \N$ be as in the proof of Proposition \ref{prop:st} and bound
\begin{equation}
\| \mathcal{P}_k f \|_{H^{1, -\delta}_k} \lesssim \Big( \sum_{j \in \N} 2^{-(2\delta - 1)j} \Big)^{1/2} \sup_{j \in \N} \big( 2^{-j/2} \|(k^2 - \Delta)^{1/2} \mathcal{P}_k f\|_{L^2(D_j)} \big).
\label{es:HdeltaFBesov}
\end{equation}
As we see below, the inequality \eqref{es:re_part} will be a consequence of the following lemma.

\begin{lemma}\label{lem:KPV} \sl Let $\lambda $ be a positive constant and $\mathcal{P}$ be given by
\[\mathcal{P} f(x) = \mathrm{p.v.} \int_{\R^n} \frac{e^{i x\cdot \xi}\widehat{f}(\xi)}{1 - |\xi|^2} \, d\xi.\]
Then,
\[ \int_{|x| < R} |(1 - \Delta)^{1/2} \mathcal{P} f(x)|^2 \, dx \lesssim R \lambda \| f \|^2_{L^2} \]
for all $R > 0$ and $f \in \mathcal{S}(\R^n)$ such that $\supp f \subset \{ \lambda / 2 \leq |x| \leq 2 \lambda \}$.
\end{lemma}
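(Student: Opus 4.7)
\emph{Plan.} The lemma is a localized form of the Agmon--H\"ormander / Kenig--Ponce--Vega resolvent estimate at spectral parameter $k=1$. The starting observation is that $\mathcal{P}f=\tfrac{(2\pi)^{n/2}}{2}\bigl(\mathcal{R}^+_1 f+\mathcal{R}^-_1 f\bigr)$ is the symmetric average of the outgoing and incoming free resolvents; since $(1-\Delta)^{1/2}$ and $\mathcal{P}$ are both Fourier multipliers (commuting with each other), the claim reduces to bounding $m(D)f$ in $L^2(B_R)$, where the symbol $m(\xi)=(1+|\xi|^2)^{1/2}(1-|\xi|^2)^{-1}$ has a simple singularity on $\{|\xi|=1\}$ modulated by smooth bounded factors and decays as $|\xi|^{-1}$ at infinity.

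My plan has three steps. First, prove the Agmon--H\"ormander-type bound
\[
\sup_{R>0}\frac{1}{R}\int_{|x|<R}|m(D)f(x)|^2\,dx \;\lesssim\; \|f\|_B^2,\quad \|f\|_B:=\sum_{j\in\N}2^{j/2}\|f\|_{L^2(D_j)},
\]
using the dyadic annuli $D_j$ already introduced in Proposition \ref{prop:st}. The route is to decompose $m(D)f$ in polar coordinates $\xi=r\omega$ as
\[
m(D)f(x)=\mathrm{p.v.}\int_0^\infty\frac{(1+r^2)^{1/2}\,r^{n-1}}{1-r^2}\,E_r\bigl[\widehat f(r\,\cdot\,)\bigr](x)\,dr,
\]
where $E_r g(x)=\int_{\mathbb{S}^{n-1}}e^{irx\cdot\omega}g(\omega)\,d\omega$ is the sphere extension operator, and then apply the classical Agmon--H\"ormander restriction estimate
\[
\|E_r g\|_{L^2(B_R)}^2\lesssim rR\,\|g\|_{L^2(\mathbb{S}^{n-1})}^2
\]
together with a one-dimensional Hilbert-transform bound in $r$ that tames the principal value near $r=1$; Minkowski and Plancherel on the sphere then finish the $B\to B^*$ estimate. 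Second, the physical-space support hypothesis $\supp f\subset\{\lambda/2\leq|y|\leq 2\lambda\}$ implies that only finitely many $j$ (those with $2^j\sim\lambda$) contribute to the sum defining $\|f\|_B$, and hence $\|f\|_B^2\lesssim\lambda\|f\|_{L^2}^2$. Third, combining these yields
\[
\int_{|x|<R}|(1-\Delta)^{1/2}\mathcal{P}f|^2\,dx \;\leq\; R\cdot\|m(D)f\|_{B^*}^2\;\lesssim\; R\|f\|_B^2\;\lesssim\; R\lambda\|f\|_{L^2}^2,
\]
which is precisely the claim.

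The main obstacle will be the first step: adapting the standard Agmon--H\"ormander argument to the multiplier $m(D)$ rather than the bare resolvent $(-\Delta-1\pm i0)^{-1}$. The delicate points are coupling the sphere extension bound with a Hilbert-transform estimate in the radial variable (to cleanly interpret the principal value at $|\xi|=1$) and absorbing the smooth bounded factor $(1+r^2)^{1/2}$ without spoiling the $R^{1/2}$ growth on the right-hand side. These are technically delicate but well-understood within the Kenig--Ponce--Vega framework; the rest of the argument is essentially an accounting exercise that turns the $B^*$-type bound into the $L^2(B_R)$-type bound stated, and reads off the factor $\lambda$ from the annular support of $f$.
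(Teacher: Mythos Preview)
Your reduction in Steps~2 and~3 is fine, but essentially tautological: the $B\to B^*$ bound for $m(D)=(1-\Delta)^{1/2}\mathcal{P}$ that you call Step~1 is \emph{equivalent} to the lemma being proved. Indeed, restricting the $B\to B^*$ bound to $f$ supported in a single annulus and evaluating the $B^*$-norm at a single $R$ gives exactly the stated inequality, and conversely the lemma plus a dyadic decomposition gives your Step~1. So the entire content of the proof sits inside your sketch of Step~1.

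That sketch has a real gap. After the polar decomposition you propose to ``apply the Agmon--H\"ormander extension estimate together with a one-dimensional Hilbert-transform bound in $r$, then Minkowski''. But Minkowski in $r$ is incompatible with the principal value: passing the $L^2(B_R)$-norm inside the $r$-integral destroys the cancellation across $r=1$, and the resulting integral $\int |1-r|^{-1}\|E_r[\widehat f(r\,\cdot\,)]\|_{L^2(B_R)}\,dr$ diverges. The alternative reading---treating the $r$-integral as the Hilbert transform of $r\mapsto h(r,x)$ evaluated at $r=1$---gives a \emph{point} evaluation in $r$, for which $L^2$-boundedness of the Hilbert transform is useless. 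Either way, the coupling of the radial singularity with the spherical extension is not resolved by the ingredients you list.

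The paper's proof handles exactly this point, but by a different route. It first splits $m(D)$ into a far-from-sphere piece $\mathcal{L}$ (a genuine Mikhlin--H\"ormander multiplier) and a near-sphere piece $\mathcal{K}$. For $\mathcal{L}$, the $R\lambda$ factor comes not from any $B$-space argument but from two applications of H\"older, sandwiching a Sobolev embedding $L^{p'}\hookrightarrow L^p$ (with $1/p'-1/p=1/n$) and the $L^{p'}$-boundedness of $m(D)$. For $\mathcal{K}$, the paper avoids polar coordinates entirely: it uses a partition of unity on the sphere and a \emph{Cartesian} parametrization $\xi=(\xi_1,\xi')$ so that the singularity becomes $((1-|\xi'|^2)^{1/2}-\xi_1)^{-1}$. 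The principal value in $\xi_1$ then produces, via the identity $\mathrm{p.v.}\int e^{-ist}t^{-1}\,dt=c\,\mathrm{sign}(s)$, a \emph{bounded} convolution kernel $a(x_1,y_1,\xi')$ in the physical variable $x_1$; the $\lambda$-factor enters through Cauchy--Schwarz in $y_1$ over the support of $f$, and Plancherel in $\xi'$ closes the estimate. The key difference from your sketch is that the Hilbert transform is realised as a bounded \emph{convolution operator in a physical-space variable}, not as a point evaluation in a frequency-radial variable.

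If you want to keep a polar-coordinate approach, you would need an honest argument replacing Minkowski---e.g.\ regularise to $(1-|\xi|^2\pm i\varepsilon)^{-1}$, prove the $B\to B^*$ bound uniformly in $\varepsilon$ via a co-area/trace argument, then pass to the limit. That is doable but is itself the KPV theorem, not a shortcut to it.
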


Lemma \ref{lem:KPV} is a slight modification of Lemma 2.4 in \cite{MR1230709} due to Kenig, Ponce and Vega.% We learnt it from the lectures notes \textit{Harmonic Analysis and Inverse Problems} by Alberto Ruiz.

Let us show that Lemma \ref{lem:KPV} implies \eqref{es:re_part}. From the inequality \eqref{es:HdeltaFBesov} we have
\begin{align}
\| \mathcal{P}_k f \|_{H^{1, -\delta}_k} &\lesssim \sup_{R > 0} \big( R^{-1/2} \|(k^2 - \Delta)^{1/2} \mathcal{P}_k f\|_{L^2(|x| \leq R)} \big)
\label{es:thegoodnorm} \\
&= k^{-n/2-1/2} \sup_{R > 0} \big( R^{-1/2} \|(1 - \Delta)^{1/2} \mathcal{P} S_k f\|_{L^2(|x| \leq R)} \big),
\label{es:withoutk}
\end{align}
where $S_k f(x) = f(x/k)$. Let $\chi$ be a smooth function with values in $[0, 1]$ such that $\supp \chi \subset D_0$ and $\chi(x) = 1$ for all $|x| \leq 1/2$. Let $\chi_j$ with $j \in \Z $ denote $\chi_j (x) = \chi(x/2^{j+1}) - \chi(x/2^{j})$ and note that $\supp \chi_j \subset \{  2^{j - 1} \leq |x| \leq 2^{j + 1}\}$. By construction, $\sum_{j \in \Z} \chi_j(x) = 1$ for every $x \in \R^n \setminus \{ 0 \}$. Then, by Lemma \ref{lem:KPV}, we have that
\begin{align*}
\| \mathcal{P}_k f \|_{H^{1, -\delta}_k} &\lesssim k^{-n/2-1/2} \sum_{j \in \Z} \sup_{R > 0} \big( R^{-1/2} \|(1 - \Delta)^{1/2} \mathcal{P} S_k (\chi_j f)\|_{L^2(|x| < R)} \big) \\
&\lesssim k^{-n/2-1/2} \sum_{j \in \Z} k^{1/2} 2^{j/2} \| S_k(\chi_j f) \|_{L^2}\\
&= \sum_{j \in \Z} 2^{j/2} \| \chi_j f \|_{L^2}.
\end{align*}
This last term can be manipulate to obtain inequality \eqref{es:re_part}:
\begin{align*}
\sum_{j \in \Z} 2^{j/2} \| \chi_j f \|_{L^2} &\leq \sum_{j < 0} 2^{j/2} \| f \|_{L^2(D_0)} + \sum_{j \in \N} 2^{j(1-2\delta)/2} 2^{j\delta} \| \chi_j f \|_{L^2} \\
& \lesssim \| f \|_{L^2(D_0)} + \Big( \sum_{j \in \N} 2^{j(1-2\delta)} \Big)^{1/2} \Big( \sum_{j\in \N} 2^{2j\delta} \| \chi_j f \|^2_{L^2} \Big)^{1/2} \\
& \lesssim \| f \|_{L^{2,\delta}}.
\end{align*}

In order to prove \eqref{es:re_part}, the only ingredient to be checked is Lemma \ref{lem:KPV}.
\begin{proof}[Proof of Lemma \ref{lem:KPV}]
Start by writing
\[(\rm I - \Delta)^{1/2} \mathcal{P} f = \mathcal{K}f + \mathcal{L}f, \]
where the operators $\mathcal{K}$ and $\mathcal{L}$ are given by
\begin{align*}
\mathcal{K}f(x) &= \mathrm{p.v.} \int_{\R^n} e^{i x\cdot \xi} \frac{\phi(1 - |\xi|^2) (1 + |\xi|^2)^{1/2}}{1 - |\xi|^2} \widehat{f}(\xi) \, d\xi, \\
\mathcal{L}f(x) &= (2\pi)^{n/2} \mathcal{J}_{1} m(D) f,
\end{align*}
where $\phi$ is a smooth function defined on $\R$ and taking values in $[0,1]$ such that $\supp \phi \subset [- 1/(8n), 1/(8n)]$ and $\phi(t) = 1$ for all $|t|\leq 1/(16n)$, $\mathcal{J}_{1}$ is the Bessel potential defined as in the section \ref{sec:regularity}, and $m(D)$ is the multiplier with symbol
\[m(\xi) = \frac{[1 - \phi(1 - |\xi|^2)] (1 + |\xi|^2)}{1 - |\xi|^2}.\]
By H\"older's inequality with $1/2 = 1/p + 1/(2n)$ we have
\[\| \mathcal{L}f \|^2_{L^2(|x| < R)} \lesssim R\, \| \mathcal{L}f \|^2_{L^p}.\]
By Sobolev embeddings\footnote{This is nothing but Lemma 2 in \S 3.2 of chapter V in \cite{MR0290095} with $\alpha = 1$, Young's inequality for functions convolved with finite measures and Theorem 1 in \S 1.2 of chapter V in \cite{MR0290095} with $\alpha = 1$}, the right-hand side is bounded by a multiple constant of
\begin{equation}
R\, \| m(D)f \|^2_{L^{p'}}
\label{term:multiplier}
\end{equation}
with $1/p + 1/p' = 1$. By the Mikhlin--H\"ormander multiplier theorem and H\"older's inequality with $1/p' = 1/2 + 1/(2n)$, we have that \eqref{term:multiplier} is bounded by a constant multiple of
\[R\, \| f \|^2_{L^{p'}} \lesssim R \lambda \, \| f \|^2_{L^2}. \]
Therefore, we have
\[\| \mathcal{L}f \|^2_{L^2(|x| < R)} \lesssim R \lambda \, \| f \|^2_{L^2}. \]
To finish the proof of Lemma \ref{lem:KPV}, it only remains to prove the corresponding  estimate for the singular part $\mathcal{K}$. To this end, we introduce a partition of unity $\{ \psi_1, \dots, \psi_{2n} \}$ of $\Gamma = \{ \xi \in \R^n : 1/2 < |\xi| < 3/2 \}$ subordinated to $\{ \Gamma_1, \dots, \Gamma_{2n} \}$, where
\begin{equation*}
\Gamma_{2l-1} = \{ \xi \in \Gamma: \xi_l > 1/(2\sqrt{n}) \}, \qquad \Gamma_{2l} = \{ \xi \in \Gamma : \xi_l < - 1/(2\sqrt{n}) \}.
\end{equation*}
This partition of unity can be assumed to satisfy:
\begin{itemize}
\item[(i)] $\psi_2(x) = \psi_1 (I_1 x)$ with $I_1$ the reflection given by the matrix $[ - e_1 | e_2 | \dots | e_n]$ and $ e_1, \dots, e_n $ the elements of canonical base of $\R^n$;
\item[(ii)] for $l = 2, \dots, n$,
\[\psi_{2l-1} (x) = \psi_1 (I_l x), \qquad \psi_{2l} (x) = \psi_2 (I_l x) \]
with $I_l$ the rotation given the matrix $[- e_l | e_2 | \dots |e_{l - 1} | e_1 | e_{l + 1} | \dots | e_n]$.
\end{itemize}
Thus, $\mathcal{K}f = \sum_{j = 1}^{2n} \mathcal{K}_j f $ with
\[\mathcal{K}_j f(x) = \mathrm{p.v.} \int_{\R^n} e^{i x\cdot \xi} \frac{\phi(1 - |\xi|^2) (1 + |\xi|^2)^{1/2}}{1 - |\xi|^2} \psi_j(\xi) \widehat{f}(\xi) \, d\xi,\]
and, in order to prove the lemma, it is enough to prove that
\begin{equation}
\| \mathcal{K}_1 f \|^2_{L^2(|x| < R)} \lesssim R \lambda \, \| f \|^2_{L^2}
\label{es:S1}
\end{equation}
for all $f \in \mathcal{S}(\R^n)$ with $\supp f \subset \{ \lambda / 2 \leq |x| \leq 2 \lambda \}$.

In order to prove \eqref{es:S1}, we first write
\[\Phi (\xi) = \frac{\phi(1 - |\xi|^2) (1 + |\xi|^2)^{1/2}}{1 - |\xi|^2} \psi_1(\xi). \]
Note that $\phi$ has been chosen in such a way that it satisfies
\[\supp \Phi \subset \{ \xi = (\xi_1, \xi') \in \R \times \R^{n - 1} : 1 - |\xi'|^2 > 1/(8n) \}\]
and for what follows we write
\[\Phi(\xi) = \frac{\Psi(\xi)}{(1 - |\xi'|^2)^{1/2} - \xi_1}  \quad \text{with} \quad \Psi(\xi) = \frac{\phi(1 - |\xi|^2) (1 + |\xi|^2)^{1/2}}{(1 - |\xi'|^2)^{1/2} + \xi_1} \psi_1(\xi). \]
This also allows us to use the expression
\begin{equation*}
\mathcal{K}_1 f(x) = \lim_{\epsilon \rightarrow 0} \int_{0 < \epsilon < |(1 - |\xi'|^2)^{1/2} - \xi_1|}  e^{i x\cdot \xi} \Phi(\xi) \widehat{f}(\xi) \, d\xi.
\end{equation*}
Let $\mathcal{F}'$ denote the Fourier transform on the variable $x'$ with dual variable $\xi'$. It follows that
\[ \mathcal{K}_1 f(x) = \int_{\R^{n - 1}}  e^{i x'\cdot \xi'} \int_\R  a(x_1, y_1, \xi') \mathcal{F}'f(y_1, \xi') \,dy_1 \,d\xi'\]
with
\[ a(x_1, y_1, \xi') = \mathrm{p.v.} \int_\R   e^{i (x_1 - y_1) \xi_1}  \frac{\Psi(\xi)}{(1 - |\xi'|^2)^{1/2} - \xi_1} \, d\xi_1\]
where the principal value is understood as $\lim_{\epsilon \rightarrow 0} \int_{0 < \epsilon < |(1 - |\xi'|^2)^{1/2} - \xi_1|}$.
By Plancherel's identity we obtain
\begin{align*}
\| \mathcal{K}_1 f \|^2_{L^2(|x| < R)} &\leq \int_{|x_1| < R} \int_{\R^{n - 1}} |\mathcal{K}_1 f(x_1, x')|^2 \,dx' \,dx_1 \\
&\sim \int_{|x_1| < R} \int_{\R^{n - 1}} \Big| \int_\R  a(x_1, y_1, \xi') \mathcal{F}'f(y_1, \xi') \,dy_1 \Big|^2 \,d\xi' \,dx_1.
\end{align*}
Furthermore, since $\supp f \subset \{ \lambda / 2 \leq |x| \leq 2 \lambda \}$, we have by applying the Cauchy--Schwarz inequality, and then Plancharel's identity, that
\begin{align*}
\| \mathcal{K}_1 f \|^2_{L^2(|x| < R)} &\lesssim \lambda \int_{|x_1| < R} \int_{\R^{n - 1}} \sup_{\lambda / 2 \leq |y_1| \leq 2 \lambda} |a(x_1, y_1, \xi')|^2 \int_\R |\mathcal{F}'f(y_1, \xi')|^2 \,dy_1  \,d\xi' \,dx_1 \\
&\lesssim \lambda R \sup_{|x_1| < R} \sup_{\lambda / 2 \leq |y_1| \leq 2 \lambda} |a(x_1, y_1, \xi')|^2 \| f \|^2_{L^2}.
\end{align*}
Therefore, to conclude the proof of the inequality \eqref{es:S1}, it is enough to show that
\begin{equation}
\sup_{|x_1| < R} \sup_{\lambda / 2 \leq |y_1| \leq 2 \lambda} |a(x_1, y_1, \xi')| \lesssim 1.
\label{es:ax_1y_1xi'}
\end{equation}
Now recall the following identity
\[ \frac{1}{(2\pi)^{1/2}} \mathrm{p.v.} \int_\R \frac{e^{-ist}}{t} \varphi(t) \, dt = -i \frac{1}{2}  \int_\R \sign (s - t) \widehat{\varphi} (t) \,dt. \]
Consequently, $a(x_1, y_1, \xi')$ is a multiple constant of
\[e^{i(x_1 - y_1)(1 - |\xi'|^2)^{1/2}} \int_\R \sign(x_1 - y_1 - t) \int_\R e^{-its} \Psi((1 - |\xi'|^2)^{1/2} - s, \xi') \, ds \,dt,\]
which can be written as a convolution changing variables according to $\xi_1 = (1 - |\xi'|^2)^{1/2} - s $:
\[\int_\R e^{i(x_1 - y_1 - t)(1 - |\xi'|^2)^{1/2}} \sign(x_1 - y_1 - t) \int_\R e^{it\xi_1} \Psi(\xi) \, d\xi_1 \,dt.\]
Finally, by Young's inequality and then the non-stationary phase principle, we have
\[|a(x_1, y_1, \xi')| \lesssim \int_\R \Big| \int_\R e^{it\xi_1} \Psi(\xi) \, d\xi_1 \Big| \, dt \lesssim 1. \]
This proves \eqref{es:ax_1y_1xi'}, so that \eqref{es:S1} holds and the proof of Lemma \ref{lem:KPV} is over.
\end{proof}

\paragraph*{\emph{Proof of the inequality \eqref{es:im_part}}} In the same way we obtained \eqref{es:thegoodnorm}, we see that
\begin{equation*}
\| \mathcal{Q}_k f \|_{H^{1, -\delta}_k} \lesssim \sup_{R > 0} \big( R^{-1/2} \|(k^2 - \Delta)^{1/2} \mathcal{Q}_k f\|_{L^2(|x| < R)} \big).
\end{equation*}
Note that $(k^2 - \Delta)^{1/2} \mathcal{Q}_k f(x)$ is a constant multiple of
\[ \int_{ |\xi| = k } e^{i x\cdot \xi} \widehat{f}(\xi) \, d\sigma_k(\xi) = \frac{1}{k} \int_{ |\eta| = 1 } e^{i kx\cdot \eta} \widehat{S_kf}(\eta) \, d\sigma(\eta),\]
where $d\sigma$ is the volume form on $\{ |\eta| = 1 \}$ with $S_k f$ as in \eqref{es:withoutk}. Note that the previous expression is essentially the Fourier transform of the measure $\widehat{S_kf}(\eta) \, d\sigma(\eta)$ supported on $\{ |\eta| = 1 \}$. Because of the same considerations as in \eqref{es:withoutk}, we have that
\[ \| \mathcal{Q}_k f \|_{H^{1, -\delta}_k} \lesssim k^{-n/2-1/2} \sup_{R > 0} \Big( R^{-1/2} \big|\int_{ |\eta| = 1 } e^{i x\cdot \eta} \widehat{S_kf}(\eta) \, d\sigma(\eta) \big|_{L^2(|x| < R)} \Big). \]
By Theorem 7.1.26 in \cite{MR1065993}, we can bound the right hand side of the previous inequality and obtain
\[\| \mathcal{Q}_k f \|_{H^{1, -\delta}_k} \lesssim k^{-n/2-1/2} \Big( \int_{ |\eta| = 1 } | \widehat{S_kf}(\eta) |^2 \, d\sigma(\eta) \Big)^{1/2}. \]
By an appropriate duality argument (in the spirit of Theorem 14.1.1 in \cite{MR2108588}) applied to Theorem 7.1.26 in \cite{MR1065993} we have that
\[\Big( \int_{ |\eta| = 1 } | \widehat{S_kf}(\eta) |^2 \, d\sigma(\eta) \Big)^{1/2} \lesssim \sum_{j\in \Z} k^{1/2} 2^{j/2} \Big( \int_{k 2^{j-1} < |x| \leq k 2^j}  | S_k f (x) |^2 \,dx \Big)^{1/2}. \]
By performing a rescaling in $k$, we have
\[ \| \mathcal{Q}_k f \|_{H^{1, -\delta}_k} \lesssim k^{-n/2-1/2} k^{1/2} k^{n/2} \sum_{j\in \Z} 2^{j/2} \Big( \int_{2^{j-1} < |x| \leq 2^j}  | f (x) |^2 \,dx \Big)^{1/2}. \]
The right-hand side can be obviously manipulated as follows to obtain inequality \eqref{es:im_part}
\begin{align*}
\sum_{j\in \Z} 2^{j/2} \Big( \int_{2^{j-1} < |x| \leq 2^j}  | f (x) |^2 \,dx \Big)^{1/2} &\lesssim \sum_{j \leq 0} 2^{j/2} \Big( \int_{|x| \leq 1}  | f (x) |^2 \,dx \Big)^{1/2} \\
& \quad + \Big( \sum_{j\geq 1} 2^{j(1 - 2\delta)} \Big)^{1/2} \Big( \sum_{j\geq 1} 2^{2j\delta} \int_{D_j}  | f (x) |^2 \,dx \Big)^{1/2}\\
&\lesssim \| f \|_{L^{2,\delta}}
\end{align*}
with $D_j$ as in the proof of Proposition \ref{prop:st}.

\subsection{Boundedness of the multiplication operator}\label{sec:multiplication}
Consider any two $f,g \in \mathcal{S}(\R^n)$, the multiplication by $V$ is defined by
\[\langle V f, g \rangle = \langle V, f g \rangle, \]
where the brackets denote the corresponding dualities. Let $\chi_D$ be a compactly supported smooth function with values in $[0,1]$ such that $\chi(x) = 1$ for all $x \in D$. Since $V \in L^p_{-s}(\R^n)$ with support in $D$, there exists $W \in L^p(\R^n)$ such that $V = (\rm I - \Delta)^{s/2} W$, and consequently
\[\langle V f, g \rangle = \langle W, (\id - \Delta)^{s/2} (f_D g_D) \rangle\]
with $f_D = \chi_D f$ and $g_D = \chi_D g$.
Let $\phi_\varepsilon$ be as in \eqref{id:def_of_fepsilon} and set $W^\sharp = \phi_\varepsilon \ast W$ and $W^\flat = W - W^\sharp$. For $s - n/p < t < s$ we have that
\[\langle V f, g \rangle = \langle (\id - \Delta)^{t/2} W^\sharp , (\id - \Delta)^{(s - t)/2} (f_D g_D) \rangle + \langle W^\flat, (\id - \Delta)^{s/2} (f_D g_D) \rangle \]
and, by H\"older's inequality,
\begin{equation}
\label{eq:multiplication_op_holder_est}
\big| \langle V f, g \rangle \big| \leq \| (\id - \Delta)^{t/2} W^\sharp \|_{L^q}  \| (\id - \Delta)^{(s - t)/2} (f_D g_D) \|_{L^{q'}} + \| W^\flat \|_{L^p} \| (\id- \Delta)^{s/2} (f_D g_D) \|_{L^{p'}}
\end{equation}
with $q = n/(s-t)$ and $q'$ and $p'$ the dual exponents of $q$ and $p$, respectively. Since $q > p$, we have by Young's inequality that
\[\| (\id - \Delta)^{t/2} W^\sharp \|_{L^q} \lesssim \varepsilon^{-t+n/q-n/p} \| W \|_{L^p}.\]
Furthermore, we will prove in Lemma \ref{lem:KP} that
\begin{align}
& \| (\id - \Delta)^{(s - t)/2} (fg) \|_{L^{q'}} \lesssim \| f_D \|_{H^{s - t}} \| g_D \|_{H^{s - t}} \label{es:KP_s-t} \\
& \| (\id - \Delta)^{s/2} (fg) \|_{L^{p'}} \lesssim \| f_D \|_{H^s} \| g_D \|_{H^s}, \label{es:KP_s}
\end{align}
hence, for $k \geq 1$, we have
\begin{align}
\big| \langle V f, g \rangle \big| &\lesssim \varepsilon^{-t+n/q-n/p} \| W \|_{L^p}  \| f_D \|_{H^{s - t}_k} \| g_D \|_{H^{s - t}_k} + \| W^\flat \|_{L^p} \| f_D \|_{H^s_k} \| g_D \|_{H^s_k} \nonumber \\
& \lesssim \varepsilon^{-t+n/q-n/p} \| W \|_{L^p}  \| f \|_{H^{s - t,-\delta}_k} \| g \|_{H^{s - t,-\delta}_k} + \| W^\flat \|_{L^p} \| f \|_{H^{s,-\delta}_k} \| g \|_{H^{s,-\delta}_k}. \label{es:compact}
\end{align}
The estimate \eqref{es:compact} will be justified by Lemma \ref{lem:compact}. Finally, by Proposition \ref{prop:st}, we have
\[\big| \langle V f, g \rangle \big| \lesssim \big( \varepsilon^{-t+n/q-n/p} k^{-2t} \| W \|_{L^p} + \| W^\flat \|_{L^p} \big)  \| f \|_{H^{s,-\delta}_k} \| g \|_{H^{s,-\delta}_k}. \] 
Choosing $\varepsilon = k^{-1/2}$, a simple duality argument show that the estimate stated in Proposition \ref{prop:multiplication} holds.

In order to end the proof of this proposition, we need to show inequalities \eqref{es:KP_s-t}, \eqref{es:KP_s} and \eqref{es:compact}. Note that they follow from the next lemmas:
\begin{lemma}\label{lem:KP}\sl Let $s > 0$ and $p \in (1, \infty)$ satisfy $p \geq n/s$. Then,
\[\| fg \|_{L^{p'}_s} \lesssim \| f \|_{H^s} \| g \|_{H^s}\]
for all $f,g \in \mathcal{S}(\R^n)$ with $1/p + 1/p' = 1$.
\end{lemma}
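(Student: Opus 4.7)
The plan is to combine a fractional Leibniz (Kato--Ponce) inequality with the Sobolev embedding theorem. Write $J^s = (I - \Delta)^{s/2}$ so that $\|fg\|_{L^{p'}_s} = \|J^s(fg)\|_{L^{p'}}$. I would invoke the Kato--Ponce inequality in the form
\[
\| J^s (fg) \|_{L^{p'}} \lesssim \| J^s f \|_{L^2} \| g \|_{L^{b}} + \| f \|_{L^{b}} \| J^s g \|_{L^2},
\]
valid for $1 < p', b < \infty$ with the H\"older balance $\tfrac{1}{p'} = \tfrac{1}{2} + \tfrac{1}{b}$, i.e.\ $\tfrac{1}{b} = \tfrac{1}{2} - \tfrac{1}{p}$. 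The hypothesis $p \geq n/s$ gives $\tfrac{1}{p} \leq \tfrac{s}{n}$, hence
\[
\frac{1}{b} \geq \frac{1}{2} - \frac{s}{n} = \frac{n - 2s}{2n},
\]
so $2 \leq b \leq \frac{2n}{n-2s}$ whenever $s < n/2$. The Sobolev embedding then supplies $H^s(\R^n) \hookrightarrow L^b(\R^n)$ in precisely this range, giving $\|f\|_{L^b} + \|g\|_{L^b} \lesssim \|f\|_{H^s} + \|g\|_{H^s}$, and combining with $\|J^s f\|_{L^2} = \|f\|_{H^s}$ yields the claim.

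The key step, and the main obstacle, is to justify the Kato--Ponce estimate for the inhomogeneous Bessel potential $J^s$ in the desired range. The standard references formulate it for the homogeneous Riesz potential $D^s = (-\Delta)^{s/2}$; the conversion is routine via Littlewood--Paley decomposition (treating the low-frequency block trivially by Young's inequality), but one should be careful when $p'$ is close to $1$ to stay inside the range of exponents where Kato--Ponce is known. Since in our application $s \leq 1/2$ and $p \geq n/s \geq 2n$, we have $p' \in (1, \tfrac{2n}{2n-1}]$, which lies comfortably in the allowed interval, and $b \in [2, \tfrac{2n}{n-2s}]$ lies strictly inside the Sobolev range. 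A paraproduct (Bony) decomposition $fg = \Pi_f g + \Pi_g f + R(f,g)$ provides a clean way to establish the fractional Leibniz bound if an off-the-shelf version is not convenient to cite.

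The endpoint cases $s = n/2$ and $s > n/2$ would only arise when $n \leq 2s$, which is excluded in the present application, but for completeness they could be handled separately: for $s > n/2$, $H^s$ is a Banach algebra (via the $H^s \hookrightarrow L^\infty$ embedding applied to the integrand after writing $J^s(fg)$ as a convolution), and for $s = n/2$ one uses $H^{n/2} \hookrightarrow L^q$ for all $q \in [2, \infty)$ and chooses $b$ finite. Finally, I would observe that the bound is symmetric in $f$ and $g$ up to interchanging the two Kato--Ponce terms, which yields the stated inequality in its symmetric form.
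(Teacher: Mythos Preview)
Your proposal is correct and follows essentially the same approach as the paper: apply the Kato--Ponce (fractional Leibniz) inequality with the H\"older split $\tfrac{1}{p'} = \tfrac{1}{2} + \tfrac{1}{b}$, then close with the Sobolev embedding $H^s \hookrightarrow L^b$, which holds precisely because $p \geq n/s$ forces $b$ into the admissible range. The paper's proof is slightly terser---it phrases the embedding as $H^s \hookrightarrow H^t \hookrightarrow L^r$ with $t = n/p \leq s$ rather than checking the $L^b$ range directly---but the two arguments are the same in substance, and your additional remarks on the $J^s$ versus $D^s$ formulation and the endpoint cases $s \geq n/2$ are correct (though not needed in the paper's application).
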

\begin{proof}
The Kato--Ponce inequality (see \cite{MR3200091} and the original reference \cite{MR951744}) ensures that
\[\| fg \|_{L^{p'}_s} \lesssim \| f \|_{H^s} \| g \|_{L^r} + \| f \|_{L^r} \| g \|_{H^s} \]
for $1/p' = 1/2 + 1/r$, which is equivalent to $1/2 = 1/p + 1/r$. By the Sobolev embeddings, we have that
\[\| fg \|_{L^{p'}_s} \lesssim \| f \|_{H^s} \| g \|_{H^t} + \| f \|_{H^t} \| g \|_{H^s} \]
with $t - n/2 = - n/r$. Since $t = n/2 - n/r = n/p \leq s$, the estimate claimed in the statement follows immediately from the last one.
\end{proof}

\begin{lemma}\label{lem:compact}\sl Let $s$ and $\delta$ be positive constants and $\phi \in \mathcal{S}(\R^n)$. Then,
\[\|\phi f\|_{H^s_k} \lesssim \| f \|_{H^{s, -\delta}_k}\]
for all $f \in \mathcal{S}(\R^n)$.
\end{lemma}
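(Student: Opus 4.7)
The plan is to prove the estimate $\|P_k(\phi f)\|_{L^2}\lesssim \|(1+|x|^2)^{-\delta/2}P_k f\|_{L^2}$, where $P_k:=(k^2-\Delta)^{s/2}$, via the commutator decomposition
\[P_k(\phi f) = \phi\cdot P_k f + [P_k,\phi]\,f.\]
The rapid decay of $\phi\in \mathcal{S}(\R^n)$ is the key ingredient: it is what transfers physical-space decay of the multiplier into the target weight, and it is the only property of $\phi$ that is actually used.

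The main (diagonal) term is immediate. Since $\phi\in\mathcal{S}(\R^n)$, the function $\phi(1+|x|^2)^{\delta/2}$ lies in $L^\infty(\R^n)$, so H\"older's inequality in the weighted $L^2$ space gives
\[\|\phi\cdot P_k f\|_{L^2}\le \|\phi(1+|x|^2)^{\delta/2}\|_{L^\infty}\,\|(1+|x|^2)^{-\delta/2}P_k f\|_{L^2}\lesssim \|f\|_{H^{s,-\delta}_k}.\]

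The commutator term is handled through its Fourier representation
\[[P_k,\phi]f(x) = (2\pi)^{-n}\iint e^{ix\cdot\xi}\bigl[(k^2+|\xi|^2)^{s/2}-(k^2+|\eta|^2)^{s/2}\bigr]\hat\phi(\xi-\eta)\hat f(\eta)\,d\eta\,d\xi.\]
A Peetre-type bound for the symbol difference by $|\xi-\eta|(1+|\xi-\eta|)^{|s|}(k^2+|\eta|^2)^{(s-1)/2}$, combined with the rapid decay of $\hat\phi$, exhibits $[P_k,\phi]$ as a $k$-dependent pseudo-differential operator of order $s-1$ whose Schwartz kernel inherits the rapid spatial decay of $\phi$. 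Iterating the weight-extraction trick from the diagonal step on these Schwartz coefficients, and then invoking the $k^{-1}$ improvement of Proposition~\ref{prop:st} to trade the gained order for $k$-uniformity, closes the estimate. The main obstacle is the non-integer case: the symbol difference must be analysed by a mean-value argument along the segment from $\eta$ to $\xi$, and one must verify that the resulting kernel still has Schwartz-class spatial decay so that the weight can be reintroduced on the commutator remainder. For integer $s$ this reduces transparently to Leibniz's rule, and the general case follows by complex interpolation between consecutive integer exponents, the interpolation constants again being controlled by Proposition~\ref{prop:st}.
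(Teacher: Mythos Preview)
Your commutator decomposition is a natural idea, and the diagonal term is handled correctly. The gap is in the commutator step. You claim that $[P_k,\phi]$ has a Schwartz kernel inheriting the rapid spatial decay of $\phi$, and that this lets you ``iterate the weight-extraction trick.'' But the kernel of the commutator is $G_k(x-y)\bigl(\phi(y)-\phi(x)\bigr)$, where $G_k$ is the distributional kernel of $(k^2-\Delta)^{s/2}$; for $x$ near the origin and $|y|\to\infty$ this behaves like $-\phi(x)G_k(x-y)$ and carries no extra decay in $y$, so you cannot absorb the input-side weight $(1+|y|^2)^{\delta/2}$ by pointwise kernel bounds. If instead you expand the commutator as $\sum_{|\alpha|\ge 1}\frac{1}{\alpha!}(D^\alpha\phi)\,\partial_\xi^\alpha p_k(D)f$ and apply the diagonal trick to each Schwartz coefficient $D^\alpha\phi$, you are left with having to show that the Fourier multipliers $\partial_\xi^\alpha p_k(D)\,P_k^{-1}$ are bounded on the weighted space $L^{2,-\delta}$ uniformly in $k$---and the same missing ingredient reappears in the Taylor remainder and at the $s=1$ endpoint of your interpolation route. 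This weighted multiplier bound is true (polynomial weights are $A_2$ and the symbols satisfy Mikhlin conditions uniformly in $k$), but it is the actual content of the lemma, and your sketch does not supply it.

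The paper avoids this difficulty by a different device: reduce to $\delta=2N$, set $g=(1+|x|^2)^{-N}P_kf$, and use the identity $(1+|x|^2)^Ne^{ix\cdot\xi}=(I-\Delta_\xi)^Ne^{ix\cdot\xi}$ to convert the polynomial weight into $\xi$-derivatives acting on the symbol $(k^2+|\xi|^2)^{-s/2}$. This exhibits $\phi f=T_Ng$ for an explicit pseudodifferential operator $T_N$ of order $-s$ with Schwartz $x$-coefficients, and the bound $\|T_Ng\|_{H^s_k}\lesssim\|g\|_{L^2}$ then follows from standard (unweighted) symbolic calculus and $L^2$-boundedness of zero-order operators, both $k$-uniform because the relevant symbol seminorms are. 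Trading the physical-space weight for Fourier-side derivatives is precisely what lets the paper bypass weighted multiplier theory.
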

\begin{proof}
Note that it is enough to prove the lemma for $\delta = 2N$ with $N \in \N \setminus \{ 0 \}$. In this case, we just need to show that
\begin{equation}
\|T_N g\|_{H^s_k} \lesssim \| g \|_{L^2}
\label{es:test_ine}
\end{equation}
for all $g \in \mathcal{S}(\R^n)$, where
\[T_N g (x) = \frac{1}{(2\pi)^{n/2}} \int_{\R^n} e^{ix\cdot \xi} a_N(x, \xi) \widehat{g}(\xi) \,d\xi\]
and
\[a_N (x, \xi) = \phi(x) e^{-ix \cdot \xi} (\id - \Delta_\xi)^N \big(e^{i x\cdot \xi} (k^2 + |\xi|^2)^{-s/2} \big).\]
In order to check this claim, it suffices to test the inequality \eqref{es:test_ine} for the functions
\[g = (1 + |\centerdot|^2)^{-N} (k^2 - \Delta)^{s/2} f\]
with $f$ any function in $\mathcal{S}(\R^n)$, and note that
\[ T_N \big( (1 + |\centerdot|^2)^{-N} (k^2 - \Delta)^{s/2} f \big) = \phi f.\]

The inequality \eqref{es:test_ine} follows from two general results for pseudodifferential operators. To apply them, we first observe that $a_N$ is a smooth function in $\R^n\times \R^n$ and
\[|\partial^\beta_x \partial^\alpha_\xi a_N(x,\xi)| \leq A_{\alpha, \beta, N} (k + |\xi|)^{-s - |\alpha|} \]
for all multi-indices $\alpha$ and $\beta$. Then, by symbolic calculus for pseudodifferential operators (for example Theorem 2 of \S 3 in chapter VI of \cite{Stein}) we see that
\begin{equation}
\|T_N g\|_{H^s_k} \leq \| S_0 g \|_{L^2} + \| S_{-1} g \|_{L^2},
\label{es:symbolicCAL}
\end{equation}
where
\[S_j g (x) = \frac{1}{(2\pi)^{n/2}} \int_{\R^n} e^{ix\cdot \xi} b_j(x, \xi) \widehat{g}(\xi) \,d\xi\]
with
\[b_0 (x, \xi) = \phi(x) e^{-ix \cdot \xi} (k^2 + |\xi|^2)^{s/2} (1 - \Delta_\xi)^N \big(e^{i x\cdot \xi} (k^2 + |\xi|^2)^{-s/2} \big),\]
and $|\partial^\beta_x \partial^\alpha_\xi b_{-1}(x,\xi)| \leq A_{\alpha, \beta, N} (k + |\xi|)^{-1 - |\alpha|} $. By the $L^2$ boundedness of pseudodifferential operators(for example Theorem 1 of \S 3 in chapter VI of \cite{Stein}), we have that the right-hand side of \eqref{es:symbolicCAL} can be bounded as
\[\| S_0 g \|_{L^2} + \| S_{-1}\circ (k^2-\Delta)^{1/2} ((k^2-\Delta)^{-1/2}  g) \|_{L^2} \lesssim \| g \|_{L^2} + \| g \|_{H^{-1}_k} \lesssim \| g \|_{L^2}. \]
This ends the proof of this lemma.
\end{proof}

\section{Reconstruction of the local strength}
\label{sec:inverse_problem}

Let us recall that our aim is to reconstruct $\mu$, the local strength of $q$, from one single realization of the measurement data
\[M(\tau,\theta) = \lim_{K \rightarrow \infty} \frac{1}{K} \int_K^{2K} k^m u^\infty(k, \theta, -\theta) \overline{u^\infty(k + \tau, \theta, -\theta)}  \, dk\]
given for some set of $\tau \geq 0$ and $\theta\in \mathbb{S}^{n-1}$.
Recall that, by Proposition \ref{prop:SobolevRegularity}, we have $q \in L^p_s (\R^n)$ with $1 < p < \infty$ and $s < (m - n)/2$ almost surely. In the section \ref{sec:direct-scattering}, we have studied the direct scattering theory for a potential under slightly more general regularity assumptions, and shown that the backscattering far-field pattern
\begin{equation}
u^\infty(k, \theta, -\theta) = c_n \int_{\R^n} e^{ik\theta \cdot y} q(y) (e^{ik\theta \cdot y} + u_{\rm sc}(y)) \,dy\label{term:FULLfar-filed}
\end{equation}
can be expressed with the Born series \eqref{id:neumann_series} of the scattered wave 
for $k \geq k_0(\omega)$ almost surely, where $k_0$ is the threshold wavelength in Remark \ref{re:k0}. Here $c_n$ is a constant depending only on $n$. 

Below, we give proofs to Theorems \ref{thm:main_theorem_rigorous} and \ref{thm:main_theorem_born_approx}. 
Notice that for the full non-linear inverse scattering problem in Theorem \ref{thm:main_theorem_rigorous}, we restrict to the case $n = m = 3$. This condition will be needed in sections \ref{subsec:2nd order} and \ref{subsec:higher_order}.

In a nutshell, the idea behind the reconstruction of $\mu$ consists of writing $u^\infty$ as the Born series
\[u^\infty (k, \theta, -\theta) = \sum_{j \in \N \setminus \{ 0 \}} u^\infty_j(k, \theta, -\theta) \]
for $k\geq k_0(\omega)$ and $\theta\in \mathbb{S}^{n-1}$
in such a way that the higher order terms
\begin{equation}
\frac{1}{K} \int_K^{2K} k^m u^\infty_j(k, \theta, -\theta) \overline{u^\infty_l(k + \tau, \theta, -\theta)}  \, dk
\label{term:remainder}
\end{equation}
with $j + l \geq 3$ are negligible in comparison with the realization of 
\begin{equation}
\frac{1}{K} \int_K^{2K} k^m u^\infty_1(k, \theta, -\theta) \overline{u^\infty_1(k + \tau, \theta, -\theta)}  \, dk.
\label{term:leading}
\end{equation}
Next, one proceeds by proving that the limit of \eqref{term:leading} as $K\to\infty$ provides enough information to reconstruct $\mu$, when given at multiple values of $\tau$ and $\theta$. Recall that according to \eqref{id:neumann_series} we have
\begin{equation}
u^\infty_j(k, \theta, -\theta) = c_n \int_{\R^n} e^{ik\theta \cdot y} q(y) (\mathcal{R}^+_k \circ q)^{j - 1} u_0(y) \,dy \label{term:INDIfar-field}
\end{equation}
where $u_0(x) = e^{ik\theta \cdot x} $ and $c_n$ is the same as in \eqref{term:FULLfar-filed}. With the expression \eqref{term:INDIfar-field} the connection of measurement data and the statistics of $q$ becomes apparent.

For the sake of clarity, we first reconstruct the local strength assuming $\mathbb{E} q=0$, and then in the section \ref{sec:non-zero-mean} we consider the more general case where $\mathbb{E}q$ is smooth and supported in $D$.

\subsection{Single backscattering} \label{subsec:single}
%Obviously, the average \eqref{term:leading} is a random variable but we have claimed it allows to reconstruct the local strength of $q$, which is a non-random function. This suggests to look for sufficient conditions for which the limit of \eqref{term:leading}, as $K$ goes to infinity, exists and is a non-random variable. 

In order to prove that the limit of \eqref{term:leading} coincides almost surely with a deterministic function, we need to apply suitable ergodicity arguments.
The following theorem (available in \cite[p. 94]{CramerLeadbetter}) provides a useful condition:

\begin{theorem}\sl
\label{thm:aux_ergodicity}
Let $X_t$ with $t\geq 0$ be a real-valued stochastic process with continuous paths and zero-mean $\expec X_t = 0$. Assume that for some positive constants $c,\epsilon$ the condition
\begin{equation*}
	|\expec (X_t X_{t+r})| \leq c(1+r)^{-\epsilon}
\end{equation*}
holds for all $t \geq 0 $ and $r \geq 0$. Then,
\begin{equation*}
	\lim_{T\to\infty} \frac 1 {T} \int_{0}^T X_t \, dt = 0 \quad \text{almost surely}.
\end{equation*}
\end{theorem}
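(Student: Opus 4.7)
Set $Y_T = \frac{1}{T}\int_0^T X_t\,dt$, so that $\mathbb{E} Y_T = 0$ by Fubini (which is justified because $\mathbb{E}|X_t|\leq (\mathbb{E} X_t^2)^{1/2} \leq \sqrt c$, as the hypothesis at $r=0$ gives $\mathbb{E} X_t^2\leq c$). My first step is a second-moment computation:
\begin{equation*}
\mathbb{E} Y_T^2 = \frac{1}{T^2}\int_0^T\int_0^T \mathbb{E}(X_s X_t)\,ds\,dt \leq \frac{2c}{T^2}\int_0^T\int_0^{T-s}(1+r)^{-\epsilon}\,dr\,ds.
\end{equation*}
A direct estimate of this double integral yields $\mathbb{E} Y_T^2 \lesssim T^{-\eta}$ for some $\eta = \eta(\epsilon) > 0$ (explicitly $\eta = \min(\epsilon,1)$, with a logarithmic correction when $\epsilon = 1$). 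In particular, $\mathbb{E} Y_T^2 \to 0$ polynomially.

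Next, I would pass to almost sure convergence along a subsequence. Choose $\alpha>0$ with $\alpha\eta>1$ and set $T_n = n^\alpha$. Chebyshev gives $\mathbb{P}(|Y_{T_n}|>\delta)\leq C\delta^{-2} n^{-\alpha\eta}$, which is summable, so Borel--Cantelli yields $Y_{T_n}\to 0$ almost surely. It remains to control the oscillation on the dyadic-like blocks $[T_n, T_{n+1}]$. Writing $U_T = \int_0^T X_t\,dt$, I decompose
\begin{equation*}
Y_T - Y_{T_n} = Y_{T_n}\Bigl(\tfrac{T_n}{T} - 1\Bigr) + \frac{1}{T}(U_T - U_{T_n}).
\end{equation*}
Since $T_{n+1}/T_n \to 1$, the first term vanishes almost surely. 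For the second, the key bound is
\begin{equation*}
\mathbb{E}\Bigl(\sup_{T_n\leq T\leq T_{n+1}}|U_T - U_{T_n}|\Bigr)^2 \leq \mathbb{E}\Bigl(\int_{T_n}^{T_{n+1}}|X_t|\,dt\Bigr)^2 \leq (T_{n+1}-T_n)\int_{T_n}^{T_{n+1}}\mathbb{E} X_t^2\,dt \leq c(T_{n+1}-T_n)^2,
\end{equation*}
by Cauchy--Schwarz and the uniform variance bound. Dividing by $T_n^2$ and inserting $T_n = n^\alpha$ gives a bound of order $n^{-2}$, which is summable; one more Chebyshev/Borel--Cantelli then forces $\sup_{T\in[T_n,T_{n+1}]}|U_T-U_{T_n}|/T \to 0$ almost surely. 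Combining the two pieces yields $Y_T\to 0$ almost surely.

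The only subtle step is the last oscillation bound, since the hypothesis provides only second moments (no Gaussianity, no $L^\infty$ control, no martingale structure). The trick is that replacing the supremum by an $L^1$ integral in $t$ costs nothing thanks to continuity of paths, and then Cauchy--Schwarz together with the already-known pointwise bound $\mathbb{E} X_t^2\leq c$ turns it into a purely second-moment calculation that is summable once the subsequence $T_n=n^\alpha$ is chosen polynomially enough. Everything else is Fubini, Chebyshev and Borel--Cantelli applied in standard fashion.
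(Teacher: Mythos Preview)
Your argument is correct. The paper does not actually prove this theorem; it merely quotes it from Cram\'er--Leadbetter \cite[p.~94]{CramerLeadbetter} and uses it as a black box, so there is no in-paper proof to compare against. What you have written is essentially the classical route to such results: a second-moment bound $\mathbb{E}Y_T^2 \lesssim T^{-\eta}$ from the covariance decay, Borel--Cantelli along a polynomial subsequence $T_n=n^\alpha$, and an oscillation estimate on the blocks $[T_n,T_{n+1}]$ obtained by dominating $\sup_T|U_T-U_{T_n}|$ by $\int_{T_n}^{T_{n+1}}|X_t|\,dt$ and then using Cauchy--Schwarz together with the uniform variance bound $\mathbb{E}X_t^2\le c$. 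All the steps check out, including the use of path continuity to make the supremum measurable and the pathwise integrals well defined, and the arithmetic $(T_{n+1}-T_n)^2/T_n^2\sim \alpha^2 n^{-2}$ that makes the final Borel--Cantelli summable regardless of how large $\alpha$ was chosen.
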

Note that under the same assumptions of this theorem,
\begin{equation*}
	\lim_{T\to\infty} \frac{1}{T} \int_T^{2T} X_t \, dt = 0 \quad \text{almost surely}.
\end{equation*}
The single backscattering $u_1^ \infty(k, \theta, -\theta)$ in the formula \eqref{term:leading} is described by a constant multiple of
\[\langle q, e^{i2k\theta \cdot y} \rangle = U_k + i V_k, \]
where $U_k$ and $V_k$ denote the real and imaginary parts. We can rewrite the product in \eqref{term:leading} as a constant multiple of
\begin{equation}
\label{id:decompo}
\begin{aligned}
2 \langle q, e^{i2k\theta \cdot y} \rangle \overline{\langle q, e^{i2(k+\tau)\theta \cdot y} \rangle} &= (1 + i) ( U_k^2 + U_{k+\tau}^2 + V_k^2 + V_{k+\tau}^2 ) - (U_k - U_{k+\tau})^2 \\
&\quad - (V_k - V_{k+\tau})^2 - i(U_k + V_{k+\tau})^2 - i(V_k - U_{k+\tau})^2.
\end{aligned}
\end{equation}
Let $W_k$ denote any of random variables
\begin{equation}
\label{eq:list_of_random_variables}
U_k,\, U_{k+\tau},\, V_k,\, V_{k+\tau},\, U_k - U_{k+\tau},\, V_k - V_{k+\tau},\, U_k + V_{k+\tau},\, V_k - U_{k+\tau}.
\end{equation}
Using Theorem \ref{thm:aux_ergodicity}, we will prove that
\begin{equation}
\label{id:lim}
\lim_{K \to \infty} \frac{1}{K} \int_K^{2K} k^m (W_k^2 - \mathbb{E}W_k^2) \,dk = 0
\end{equation}
almost surely, and hence we deduce by \eqref{id:decompo} that
\begin{equation}
\begin{aligned} 
\lim_{K \rightarrow \infty} \frac{1}{K} \int_K^{2K} & k^m u^\infty_1(k, \theta, -\theta) \overline{u^\infty_1(k + \tau, \theta, -\theta)}  \, dk \\ &= \lim_{K \rightarrow \infty} \frac{1}{K} \int_K^{2K} k^m \mathbb{E}( u^\infty_1(k, \theta, -\theta) \overline{u^\infty_1(k + \tau, \theta, -\theta)} )  \, dk
\end{aligned}
\label{id:ergodicity}
\end{equation}
almost surely.

According to Theorem \ref{thm:aux_ergodicity}, identity \eqref{id:lim} holds if 
there exists an $\varepsilon > 0$ such that
\begin{equation}\label{es:sufCOND}
\big|\mathbb{E} \big(k^m (W_k^2 - \mathbb{E}W_k^2 ) (k + r)^m (W_{k + r}^2 - \mathbb{E}W_{k + r}^2 ) \big)\big| \lesssim (1 + r)^{-\varepsilon}
\end{equation}
for all $k \geq 1$ and $r \geq 0$. In order to verify condition \eqref{es:sufCOND}, we observe that $(W_k, W_{k + r})$ is always a centred Gaussian random vector (which holds because of \eqref{term:gaussianVECTOR} and $\mathbb{E}q = 0$), and we use the following lemma.
\begin{lemma} 
\label{lemma:gaussian_pair}
\sl Let $X$ and $Y$ be two random variables such that the pair $(X, Y)$ is a Gaussian random vector. If $\mathbb{E} X = \mathbb{E} Y = 0$, then
\[\mathbb{E} ((X^2 - \mathbb{E}X^2) (Y^2 - \mathbb{E}Y^2)) = 2 (\mathbb{E}(XY))^2. \]
\end{lemma}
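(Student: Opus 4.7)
My plan is to reduce the identity to a straightforward moment computation via the classical decomposition trick for jointly Gaussian pairs, which converts the problem into computing moments of a single centered Gaussian and its independent complement.

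First I would expand the left-hand side:
\begin{equation*}
\mathbb{E}\big((X^2 - \mathbb{E}X^2)(Y^2 - \mathbb{E}Y^2)\big) = \mathbb{E}(X^2Y^2) - \mathbb{E}(X^2)\,\mathbb{E}(Y^2),
\end{equation*}
so the task reduces to showing $\mathbb{E}(X^2Y^2) = \mathbb{E}(X^2)\,\mathbb{E}(Y^2) + 2(\mathbb{E}(XY))^2$. The degenerate case $\mathbb{E}(X^2) = 0$ is trivial since then $X = 0$ almost surely, so both sides equal zero. Assume therefore $\mathbb{E}(X^2) > 0$ and set $a = \mathbb{E}(XY)/\mathbb{E}(X^2)$ and $Z = Y - aX$.

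The key step is to observe that, because $(X,Y)$ is a centered Gaussian vector, the pair $(X, Z)$ is also centered Gaussian, and by construction $\mathbb{E}(XZ) = \mathbb{E}(XY) - a\,\mathbb{E}(X^2) = 0$. For jointly Gaussian centered random variables, vanishing covariance is equivalent to independence, so $X$ and $Z$ are independent. Writing $Y = aX + Z$, I would expand
\begin{equation*}
\mathbb{E}(X^2 Y^2) = a^2 \mathbb{E}(X^4) + 2a\,\mathbb{E}(X^3)\mathbb{E}(Z) + \mathbb{E}(X^2)\mathbb{E}(Z^2),
\end{equation*}
using independence to split the expectations. Since $X$ is a centered Gaussian, $\mathbb{E}(X^3) = 0$ and $\mathbb{E}(X^4) = 3(\mathbb{E}X^2)^2$. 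Moreover $\mathbb{E}(Z^2) = \mathbb{E}(Y^2) - a^2\mathbb{E}(X^2)$.

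Substituting these into the previous display yields
\begin{equation*}
\mathbb{E}(X^2 Y^2) = 3a^2(\mathbb{E}X^2)^2 + \mathbb{E}(X^2)\big(\mathbb{E}(Y^2) - a^2\mathbb{E}(X^2)\big) = \mathbb{E}(X^2)\mathbb{E}(Y^2) + 2a^2(\mathbb{E}X^2)^2,
\end{equation*}
and recognizing $a^2(\mathbb{E}X^2)^2 = (\mathbb{E}(XY))^2$ gives the desired identity. There is no real obstacle here, only the bookkeeping; the only delicate point worth stating carefully is the passage from ``uncorrelated'' to ``independent'', which must be justified by invoking the joint Gaussianity of $(X,Z)$ inherited from that of $(X,Y)$ via the linear transformation.
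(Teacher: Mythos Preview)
Your proof is correct and rests on the same core idea as the paper's: decompose one variable as a linear combination of the other and an independent centered Gaussian, then reduce to the moment identities $\mathbb{E}X^4 = 3(\mathbb{E}X^2)^2$ and $\mathbb{E}X^3 = 0$. The execution differs slightly. The paper first normalizes to $\mathbb{E}X^2 = \mathbb{E}Y^2 = 1$, then introduces an auxiliary Gaussian $X'$ independent of $X$, builds $Y' = (\cos\alpha)X + (\sin\alpha)X'$ with $\cos\alpha = \mathbb{E}(XY)$, and argues via equality in law of $(X,Y)$ and $(X,Y')$. You instead project $Y$ orthogonally onto $X$ inside the original probability space, setting $Z = Y - aX$ and invoking that uncorrelated jointly Gaussian variables are independent. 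Your route is marginally more economical: it avoids the normalization step, the construction of external randomness, and the equality-in-distribution argument. The paper's route, on the other hand, makes the role of the Gaussian law somewhat more explicit by showing the identity depends only on the distribution of $(X,Y)$.
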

\begin{proof}
Note that one can assume that the variances be $\mathbb{E}X^2 = \mathbb{E}Y^2 = 1$. Thus, one is reduced to prove
\begin{equation}
\label{id:variance1}
\mathbb{E} ((X^2 - 1) (Y^2 - 1)) = 2 (\mathbb{E}(XY))^2
\end{equation}
for $X$ and $Y$ satisfying $\mathbb{E}X^2 = \mathbb{E}Y^2 = 1$. In order to prove \eqref{id:variance1}, it is enough to show that it holds for a Gaussian vector $(X, Y')$ having the same probability law as $(X, Y)$. Indeed,
\begin{align*}
\mathbb{E} ((X^2 - 1) (Y^2 - 1)) &= \int_{\R^2} (x^2 - 1) (y^2 - 1) \, d\mathbb{P}_{(X,Y)}(x,y) \\
& = \int_{\R^2} (x^2 - 1) (y^2 - 1) \, d\mathbb{P}_{(X,Y')}(x,y) = \mathbb{E} ((X^2 - 1) ((Y')^2 - 1))
\end{align*}
and
\[\mathbb{E}(XY) = \int_{\R^2} xy \, d\mathbb{P}_{(X,Y)}(x,y) = \int_{\R^2} xy \, d\mathbb{P}_{(X,Y')}(x,y) = \mathbb{E}(XY'). \]
Let $X'$ be a Gaussian random variable with mean $0$, variance $1$ and independent of $X$. Consider $Y' = \cos \alpha X + \sin \alpha X' $ with $\cos \alpha = \mathbb{E} (XY)$, which is possible since $|\mathbb{E} (XY)| \leq 1$. Note that $Y'$ is a Gaussian random variable and the pair $(X, Y')$ is a Gaussian random vector. Moreover, since
\[ \left( \begin{array}{c}
\mathbb{E} X \\ \mathbb{E} Y 
\end{array} \right) = \left( \begin{array}{c}
\mathbb{E} X \\ \mathbb{E} Y' 
\end{array} \right), \qquad \left( \begin{array}{c c}
\mathbb{E} X^2 & \mathbb{E} (XY)  \\ \mathbb{E} (XY) &  \mathbb{E} Y^2 
\end{array} \right) = \left( \begin{array}{c c}
\mathbb{E} X^2 & \mathbb{E} (XY')  \\ \mathbb{E} (XY') &  \mathbb{E} (Y')^2 
\end{array} \right), \]
the Gaussian vector $(X, Y)$ and $(X, Y')$ are equally distributed. Therefore, it only remains to show that \eqref{id:variance1} holds for $(X, Y')$, but this is a simple computation that can be verified using that $\mathbb{E}(X Y') = \cos \alpha$,
\begin{align*}
\mathbb{E} ((X^2 - \mathbb{E}X^2) ((Y')^2 - \mathbb{E}(Y')^2)) &= (\cos \alpha)^2 \mathbb{E}X^4 - (\cos \alpha)^2 + (\sin \alpha)^2 \mathbb{E}(X^2 (X')^2) - (\sin \alpha)^2 \\
& \quad + 2 \cos \alpha \sin \alpha \, \mathbb{E}(X^3 X')
\end{align*}
and
\[\mathbb{E}X^4 = 3, \qquad \mathbb{E}(X^2 (X')^2) = \mathbb{E}X^2 \mathbb{E}(X')^2 = 1, \qquad \mathbb{E}(X^3 X') = \mathbb{E}X^3 \mathbb{E}X' = 0. \]
This concludes the proof.
\end{proof}

Using Lemma \ref{lemma:gaussian_pair}, the identity \eqref{id:lim} holds if there exist constants $c > 0$, which may depend on $\tau$, and $\varepsilon > 0$ such that
\begin{equation}
	\label{eq:decay_of_Wk}
	|\mathbb{E} (k^{m/2}(k+r)^{m/2}W_k W_{k + r}) | \le c (1 + r)^{-\varepsilon}.
\end{equation}
Note that the inequality \eqref{eq:decay_of_Wk} is a consequence of the following proposition and therefore \eqref{id:lim} holds and consequently \eqref{id:ergodicity}.

\begin{proposition} \label{prop:first_order_decay}\sl Suppose the potential satisfies $\expec q = 0$. Then, there exists a known constant $c_{n,m}$, depending on $n$ and $m$, such that
\begin{equation}
\mathbb{E}(u_1^ \infty(k, \theta, -\theta) \overline{u_1^ \infty(k + \tau, \theta, -\theta)}) = c_{n,m}  k^{-m} \widehat{\mu} (2\tau\theta) + \mathcal{O}(k^{-m-1})
\label{id:approxCORRELATION}
\end{equation}
for $k \geq 1/2$ and $\tau \geq 0$. Moreover, for all $k_1, k_2 > 0$, we have that
\begin{align}
\label{es:correlREAL}
|\mathbb{E}(U_{k_1} U_{k_2})| &\lesssim k_1^{-m} (1 + |k_1 - k_2|)^{-N}\\
\label{es:correlIMAGINARY}
|\mathbb{E}(V_{k_1} V_{k_2})| &\lesssim k_1^{-m}(1 + |k_1 - k_2|)^{-N}\\
\label{es:correlREAL-IMAGI}
|\mathbb{E}(U_{k_1} V_{k_2})| &\lesssim k_1^{-m}(1 + |k_1 - k_2|)^{-N}
\end{align}
for all $N \in \N$.
\end{proposition}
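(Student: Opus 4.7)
The starting point is that the first Born term
\[u_1^\infty(k,\theta,-\theta) = c_n\,\langle q,e^{2ik\theta\cdot\centerdot}\rangle\]
is a (complex-valued) linear functional of $q$, so its two-point correlations are governed by the covariance operator $C_q$. First I would expand the covariance using that $K_q$ is the Schwartz kernel of $C_q$ and that $C_q$ acts on the pure exponentials as $C_q(e^{i\eta\cdot\centerdot})(x) = e^{i\eta\cdot x} c_q(x,\eta)$; this yields
\[\mathbb{E}\big(u_1^\infty(k_1,\theta,-\theta)\,\overline{u_1^\infty(k_2,\theta,-\theta)}\big) = |c_n|^2 \int_{\R^n} e^{2i(k_1-k_2)\theta\cdot x}\,c_q(x,2k_1\theta)\,dx,\]
which is valid whenever $2k_1\geq 1$, so that the symbol expansion \eqref{id:pre-expansion}--\eqref{id:isotropic} can be used. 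An analogous computation, without conjugating the second factor, gives
\[\mathbb{E}\big(u_1^\infty(k_1,\theta,-\theta)\,u_1^\infty(k_2,\theta,-\theta)\big) = |c_n|^2 \int_{\R^n} e^{2i(k_1+k_2)\theta\cdot x}\,c_q(x,2k_1\theta)\,dx.\]

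For \eqref{id:approxCORRELATION}, I would set $k_1=k$, $k_2=k+\tau$ and insert the principal symbol expansion \eqref{id:isotropic}, namely $c_q(x,2k\theta) = \mu(x)(2k)^{-m} + a(x,2k\theta)$ with $a\in\mathcal{S}^{-m-1}$. The principal piece produces
\[|c_n|^2 (2\pi)^{n/2} (2k)^{-m}\,\widehat{\mu}(2\tau\theta) = c_{n,m}\,k^{-m}\,\widehat{\mu}(2\tau\theta),\]
while the remainder, controlled by $|a(x,2k\theta)|\lesssim (1+k)^{-m-1}$ uniformly on the compact $x$-support of $a(\centerdot,\xi)$, is $O(k^{-m-1})$ uniformly in $\tau\geq 0$.

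For the decay estimates \eqref{es:correlREAL}--\eqref{es:correlREAL-IMAGI} I would exploit the oscillatory factor by $N$ integrations by parts in $x$ using the operator $(2i(k_1-k_2))^{-1}(\theta\cdot\nabla_x)$ (respectively $(2i(k_1+k_2))^{-1}(\theta\cdot\nabla_x)$ in the non-conjugate case). Since $c_q(\centerdot,2k_1\theta)$ is smooth and supported in the compact set $D$ (as $\mu$ and $a(\centerdot,\xi)$ are) and satisfies $|\partial_x^\alpha c_q(x,2k_1\theta)|\lesssim k_1^{-m}$ for every multi-index $\alpha$, this gives
\[\big|\mathbb{E}\big(u_1^\infty(k_1,\theta,-\theta)\,\overline{u_1^\infty(k_2,\theta,-\theta)}\big)\big|\lesssim k_1^{-m}(1+|k_1-k_2|)^{-N},\]
and, for the non-conjugate version, the bound $\lesssim k_1^{-m}(1+k_1+k_2)^{-N}\leq k_1^{-m}(1+|k_1-k_2|)^{-N}$, valid since $k_1,k_2>0$. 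Writing $U_k=\tfrac12(\tilde u_k+\overline{\tilde u_k})$, $V_k=\tfrac{1}{2i}(\tilde u_k-\overline{\tilde u_k})$ for $\tilde u_k=\langle q,e^{2ik\theta\cdot\centerdot}\rangle$, each of $\mathbb{E}(U_{k_1}U_{k_2})$, $\mathbb{E}(V_{k_1}V_{k_2})$ and $\mathbb{E}(U_{k_1}V_{k_2})$ decomposes into a linear combination of the four covariances $\mathbb{E}(\tilde u_{k_j}\tilde u_{k_l})$, $\mathbb{E}(\tilde u_{k_j}\overline{\tilde u_{k_l}})$, $\mathbb{E}(\overline{\tilde u_{k_j}}\tilde u_{k_l})$, $\mathbb{E}(\overline{\tilde u_{k_j}}\overline{\tilde u_{k_l}})$, each of which is bounded as above, and \eqref{es:correlREAL}--\eqref{es:correlREAL-IMAGI} follow.

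The main technical care lies in justifying the pointwise identity $C_q(e^{i\eta\cdot\centerdot})(x) = e^{i\eta\cdot x}c_q(x,\eta)$ for $|\eta|\geq 1$: since $q$ is only a distribution and the exponentials are not Schwartz, this has to be read through the distributional identity \eqref{id:pre-expansion} after testing against a smooth function in $x$. The outer $x$-integrations against the smooth, bounded exponential factors play precisely that role, and this is legitimate because the integrand is smooth and the $x$-variable is effectively confined to the compact set $D$ through the supports of $\mu$ and $a(\centerdot,\xi)$. Once this identification is in place, the argument reduces to standard symbol estimates and non-stationary phase in a bounded region.
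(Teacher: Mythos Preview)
Your proposal is correct and follows essentially the same route as the paper: both reduce the covariance $\mathbb{E}(u_1^\infty(k_1)\overline{u_1^\infty(k_2)})$ (and its unconjugated counterpart) to the oscillatory integral $\int e^{\pm 2i(k_1\mp k_2)\theta\cdot x} c_q(x,2k_1\theta)\,dx$ via \eqref{id:pre-expansion}, then use the principal symbol expansion \eqref{id:isotropic} for \eqref{id:approxCORRELATION} and non-stationary phase in $x$ for the decay estimates, with \eqref{es:correlREAL}--\eqref{es:correlREAL-IMAGI} following by writing $U_k,V_k$ as linear combinations of $\tilde u_k,\overline{\tilde u_k}$. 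Your remark about justifying the pairing with non-compactly supported exponentials through \eqref{id:pre-expansion} is exactly the point the paper handles by inserting a cutoff $\chi\in C_0^\infty$ equal to $1$ on $D$.
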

\begin{proof}
Note that
\begin{equation}
\mathbb{E}(\langle q, e^{i2k_1\theta \cdot y} \rangle \overline{\langle q, e^{i2k_2\theta \cdot x} \rangle}) = \int_{\R^n} \Big( \int_{\R^n} K_q(x,y) e^{-i2 k_1 \theta \cdot(x-y)} \,dy \Big)  e^{-i2 (k_2 - k_1) \theta \cdot x} \, dx.
\label{id:startingPOINT}
\end{equation}
By \eqref{id:expansion}, we have that
\[\begin{aligned}
\mathbb{E}(\langle q, e^{i2k_1\theta \cdot y} \rangle & \overline{\langle q, e^{i2k_2\theta \cdot x} \rangle}) \\ &= \int_{\R^n} \mu(x) |2 k_1|^{-m} e^{-i2 (k_2 - k_1) \theta \cdot x} \,dx + \int_D a(x, 2 k_1 \theta) e^{-i2 (k_2 - k_1) \theta \cdot x} \,dx
\end{aligned}\]
for $k_1 \geq 1/2$, which implies \eqref{id:approxCORRELATION}.

On the other hand, by \eqref{id:startingPOINT} and \eqref{id:pre-expansion}, we have
\[\mathbb{E}(\langle q, e^{i2k_1\theta \cdot y} \rangle \overline{\langle q, e^{i2k_2\theta \cdot x} \rangle}) = \int_{\R^n} c_q(x, 2 k_1 \theta) \chi(x)  e^{-i2 (k_2 - k_1) \theta \cdot x} \, dx\]
with $\chi \in C^\infty_0(\R^n)$ such that $\chi(x) = 1$ for all $x \in D$, which implies, by the non-stationary phase principle, that
\begin{equation}
|\mathbb{E}(\langle q, e^{i2k_1\theta \cdot y} \rangle \overline{\langle q, e^{i2k_2\theta \cdot x} \rangle})| \lesssim (1 + k_1)^{-m} (1 + |k_1 - k_2|)^{-N}
\label{es:primera}
\end{equation}
for all $N \in \N$. By the same kind of considerations, one can proves that
\[\mathbb{E}(\langle q, e^{i2k_1\theta \cdot y} \rangle \langle q, e^{i2k_2\theta \cdot x} \rangle) = \int_{\R^n} c_q(x, 2 k_1 \theta) \chi(x)  e^{i2 (k_1 + k_2) \theta \cdot x} \, dx\]
with $\chi \in C^\infty_0(\R^n)$ such that $\chi(x) = 1$ for all $x \in D$, which implies, again by the non-stationary phase principle, that
\begin{equation}
|\mathbb{E}(\langle q, e^{i2k_1\theta \cdot y} \rangle \langle q, e^{i2k_2\theta \cdot x} \rangle)| \lesssim (1 + k_1)^{-m} (1 + k_1 + k_2)^{-N}
\label{es:segunda}
\end{equation}
for all $N \in \mathbb{N}$.  Finally, the estimates \eqref{es:correlREAL}, \eqref{es:correlIMAGINARY} and \eqref{es:correlREAL-IMAGI} follow from \eqref{es:primera} and \eqref{es:segunda}.

%On the other hand, note that the following identity also holds
%\begin{equation*}
%\mathbb{E}(u_1^ \infty(\theta, k_1) \overline{u_1^ \infty(\theta, k_2)}) = \int_{\R^n} \Big( \int_{\R^n} K_q(x,y) e^{i2 k_2 \theta \cdot(x-y)} \,dy \Big)  e^{i2 (k_1 - k_2) \theta \cdot x} \, dx.
%\end{equation*}
%By \eqref{id:pre-expansion},
%\[\mathbb{E}(u_1^ \infty(\theta, k_1) \overline{u_1^ \infty(\theta, k_2)}) = \int_{\R^n} c_q(x, -2 k_2 \theta) \chi(x)  e^{i2 (k_1 - k_2) \theta \cdot x} \, dx,\]
%which implies, again by the non-stationary phase lemma, that
%\begin{equation}
%|\mathbb{E}(u_1^ \infty(\theta, k_1) \overline{u_1^ \infty(\theta, k_2)})| \lesssim k_2^{-m} (1 + |k_1 - k_2|)^{-N}
%\label{es:segunda}
%\end{equation}
%for all $N \in \N$. Now the bound \eqref{es:correlCONJ} follows from \eqref{es:primera} and \eqref{es:segunda}. Finally, the same kind of considerations used to show \eqref{es:correlCONJ} yield \eqref{es:correlNON-CONJ}.
\end{proof}

As a consequence of the identities \eqref{id:ergodicity} and \eqref{id:approxCORRELATION}, the information of the local strength of $q$ provided by the single backscattering can be recorded as follows.
\begin{corollary}\label{cor:first_order_data} \sl Suppose $\expec q = 0$. The Fourier transform of the local strength of $q$ can be recovered from the single backscattering as follows:
\begin{equation}
\lim_{K \rightarrow \infty} \frac{1}{K} \int_K^{2K} k^m u^\infty_1(k, \theta, -\theta) \overline{u^\infty_1(k + \tau, \theta, -\theta)}  \, dk = c_{n,m} \widehat{\mu} (2\tau\theta) \label{eq:first_order_integral}
\end{equation}
almost surely for any fixed $\tau \geq 0$ and $\theta \in \mathbb{S}^{n - 1}$.
\end{corollary}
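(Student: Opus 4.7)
The plan is to decouple the probabilistic content from a purely deterministic limit calculation. First I would invoke identity \eqref{id:ergodicity}, whose hypothesis \eqref{es:sufCOND} has just been verified through Lemma \ref{lemma:gaussian_pair} combined with the decay bounds \eqref{es:correlREAL}--\eqref{es:correlREAL-IMAGI} and Theorem \ref{thm:aux_ergodicity}. This replaces the random integral appearing in the statement of the corollary with the deterministic integral
\begin{equation*}
\frac{1}{K} \int_K^{2K} k^m \, \mathbb{E}\bigl( u_1^\infty(k,\theta,-\theta)\overline{u_1^\infty(k+\tau,\theta,-\theta)} \bigr) \, dk,
\end{equation*}
almost surely, and so reduces the claim to computing the $K \to \infty$ limit of this quantity.

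Next I would substitute the asymptotic expansion \eqref{id:approxCORRELATION} furnished by Proposition \ref{prop:first_order_decay}, which gives
\begin{equation*}
k^m \, \mathbb{E}\bigl( u_1^\infty(k,\theta,-\theta)\overline{u_1^\infty(k+\tau,\theta,-\theta)} \bigr) = c_{n,m}\,\widehat{\mu}(2\tau\theta) + \mathcal{O}(k^{-1})
\end{equation*}
uniformly in $k\geq 1/2$, with an implied constant allowed to depend on the fixed $\tau$ and $\theta$ but not on $k$. The constant term contributes exactly $c_{n,m}\,\widehat{\mu}(2\tau\theta)$ under the averaging $K^{-1}\int_K^{2K}\,\centerdot\,dk$, while the remainder is dominated by
\begin{equation*}
\frac{1}{K}\int_K^{2K} k^{-1}\,dk \,=\, \frac{\log 2}{K},
\end{equation*}
which vanishes as $K \to \infty$. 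Combining these two paragraphs yields the corollary.

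The genuine work is not in this final bookkeeping, but rather upstream in the two ingredients I would be leveraging: the replacement of the random average by its expectation (which relied on reducing a fourth moment to a second moment for a centred Gaussian pair via Lemma \ref{lemma:gaussian_pair}, and then invoking Theorem \ref{thm:aux_ergodicity}), and the isolation of the principal symbol $\mu(x)|\xi|^{-m}$ of $C_q$ from its subprincipal part $a(x,\xi)\in\mathcal{S}^{-m-1}$, which is what feeds the $\mathcal{O}(k^{-m-1})$ remainder in \eqref{id:approxCORRELATION} via the non-stationary phase argument of Proposition \ref{prop:first_order_decay}. Note also that the exceptional $\mathbb{P}$-null set depends on the particular $(\tau,\theta)$, so that recovering $\widehat\mu$ globally (as in Theorems \ref{thm:main_theorem_rigorous} and \ref{thm:main_theorem_born_approx}) will subsequently require restricting to a countable dense subset of $\R_+\times\mathbb{S}^{n-1}$ and exploiting the continuity of $\widehat\mu$.
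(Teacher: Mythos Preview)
Your proposal is correct and follows exactly the approach the paper indicates: the paper simply states that the corollary is ``a consequence of the identities \eqref{id:ergodicity} and \eqref{id:approxCORRELATION},'' and you have spelled out precisely those two steps, including the elementary bookkeeping that the $\mathcal{O}(k^{-1})$ remainder averages to $K^{-1}\log 2\to 0$. Your closing remarks about the dependence of the null set on $(\tau,\theta)$ and the subsequent countable-dense-set argument also match the paper's reasoning in the proof of Theorem \ref{thm:main_theorem_born_approx}.
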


\begin{proof}[Proof of Theorem \ref{thm:main_theorem_born_approx} in the case $\mathbb{E}q=0$.]
Corollary \ref{cor:first_order_data} connects the measurement data to the Fourier transform of $\mu$ at a point $2\tau\theta$. 
We can now proceed by repeating the same measurement at a countable dense set $\{(\tau_j,\theta_j)\}_{j\in\N} \subset \R_+\times {\mathbb S}^2$. The almost sure convergence takes place simultaneously is this countable dense set. Since $\mu$ is smooth with compact support, it follows that $\widehat{\mu}$ is in the Schwartz class, in particular is continuous. Then, we can recover $\widehat{\mu}$ everywhere from the countable dense set extending by continuity. Finally, the inverse formula of the Fourier transform provides $\mu$.
\end{proof}

The following consequence of identity \eqref{id:decompo} and Proposition \ref{prop:first_order_decay} is not needed for Corollary \ref{cor:first_order_data}. However, it will allow us to study the convergence speed of integral \eqref{eq:first_order_integral} in appendix \ref{sec:scaling_regimes}.

\begin{corollary}
\label{cor:covariances_in_full_first_order_integral}\sl
Suppose $\expec q = 0$ and
let $Z : \Omega \times \R_+ \to \mathbb{C}$ be the random process defined by
\begin{equation*}
	Z(k) = u^\infty_1(k, \theta, -\theta) \overline{u^\infty_1(k + \tau, \theta, -\theta)}.
\end{equation*}
It follows that 
\begin{align}
	\big| \expec \big[ (Z(k)-\expec Z(k)) (Z(k+r) - \expec Z(k+r)) \big] \big|
	 & \lesssim  (1 + \min(r, |r-\tau|))^{-N} \label{eq:2nd orderd_first_order_cov_1}\\
		\big| \expec \big[ (Z(k)-\expec Z(k)) \overline{(Z(k+r) - \expec Z(k+r))} \big] \big|
	& \lesssim  (1 + \min(r, |r-\tau|))^{-N} \label{eq:2nd orderd_first_order_cov_2}
\end{align}
for any $N$.
\end{corollary}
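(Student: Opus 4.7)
The plan is to reduce the fourth-order Gaussian moment to a sum of products of pairwise covariances using Isserlis' (Wick's) theorem, and then estimate each pairwise covariance via the non-stationary phase computation already carried out in the proof of Proposition \ref{prop:first_order_decay}.

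First, write $u_1^\infty(k,\theta,-\theta)= c_n\langle q, e^{i2k\theta\cdot y}\rangle$ and use that $q$ is real to express
\[
Z(k)=c_n^2 X_1 X_2,\qquad Z(k+r)=c_n^2 X_3 X_4,
\]
with $X_1=\langle q,e^{i2k\theta\cdot y}\rangle$, $X_2=\langle q,e^{-i2(k+\tau)\theta\cdot y}\rangle$, $X_3=\langle q,e^{i2(k+r)\theta\cdot y}\rangle$, $X_4=\langle q,e^{-i2(k+r+\tau)\theta\cdot y}\rangle$. Since each $X_j$ is a complex linear functional of the zero-mean real Gaussian field $q$, its real and imaginary parts form a centered Gaussian vector, and Isserlis' theorem gives
\[
\expec[X_1 X_2 X_3 X_4]-\expec[X_1 X_2]\expec[X_3 X_4]=\expec[X_1 X_3]\expec[X_2 X_4]+\expec[X_1 X_4]\expec[X_2 X_3].
\]

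Next, following the derivation in the proof of Proposition \ref{prop:first_order_decay}, each pairwise covariance takes the form $\int c_q(x,a\theta)\chi(x)e^{i(a+b)\theta\cdot x}\,dx$, and by non-stationary phase on the outer $x$-integration, it is controlled by $(1+k)^{-m}(1+|a+b|)^{-N}$ for every $N$. Reading off the sum of frequencies in each of the four relevant pairings gives $a+b$ equal to $2(2k+r)$, $-2(2k+r+2\tau)$, $-2(r+\tau)$ and $2(r-\tau)$ respectively. The first two combine into a contribution that is rapidly decaying in $k$ and thus negligible, while the remaining product satisfies
\[
|\expec[X_1 X_4]\expec[X_2 X_3]|\lesssim (1+r+\tau)^{-N}(1+|r-\tau|)^{-N}.
\]
Since $r\le r+\tau$ we have $(1+r+\tau)^{-N}\le (1+r)^{-N}$, so this product is bounded by $(1+r)^{-N}(1+|r-\tau|)^{-N}\le(1+\min(r,|r-\tau|))^{-N}$, which establishes \eqref{eq:2nd orderd_first_order_cov_1}.

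For \eqref{eq:2nd orderd_first_order_cov_2}, repeat the argument with $\overline{Z(k+r)}=c_n^2 Y_3 Y_4$, where $Y_3=\langle q,e^{-i2(k+r)\theta\cdot y}\rangle$ and $Y_4=\langle q,e^{i2(k+r+\tau)\theta\cdot y}\rangle$. The four pairings now give $a+b$ equal to $-2r$, $2r$, $2(2k+r+\tau)$ and $-2(2k+r+\tau)$; the last two pairings are again negligible in $k$, while the first two contribute a bound $\lesssim (1+r)^{-2N}$, which is in turn dominated by $(1+\min(r,|r-\tau|))^{-N}$. The main source of friction is the bookkeeping of sign conventions for the four frequencies through complex conjugations; once these are set correctly, the estimates are routine consequences of Proposition \ref{prop:first_order_decay}.
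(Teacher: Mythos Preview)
Your argument is correct and takes a slightly different route from the paper's own proof. The paper expands $Z(k)$ via the identity \eqref{id:decompo} into a linear combination of squares $W_k^2$ of real Gaussians from the list \eqref{eq:list_of_random_variables}, then applies Lemma~\ref{lemma:gaussian_pair} to turn each term $\expec\big[(W_k^2-\expec W_k^2)((W'_{k+r})^2-\expec (W'_{k+r})^2)\big]$ into $2(\expec W_k W'_{k+r})^2$, and finally bounds these using the decay estimates \eqref{es:correlREAL}--\eqref{es:correlREAL-IMAGI} from Proposition~\ref{prop:first_order_decay}. You instead apply Isserlis' theorem directly to the complex-linear functionals $X_j=\langle q,e^{ia_j\theta\cdot y}\rangle$, which is legitimate because each $X_j$ is a $\mathbb{C}$-linear functional of a real centred Gaussian field and Isserlis extends by multilinearity. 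This bypasses both the real decomposition \eqref{id:decompo} and the special-case Lemma~\ref{lemma:gaussian_pair}, going straight to products of pairwise covariances; the resulting frequency bookkeeping you carry out matches exactly what the paper obtains from \eqref{es:primera}--\eqref{es:segunda}. The approaches are equivalent in spirit, but yours is more streamlined.

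Two small points of polish. First, since $\overline{u_1^\infty}$ carries $\overline{c_n}$, the correct prefactor is $|c_n|^2$ rather than $c_n^2$; this has no effect on the estimates. Second, your phrase ``rapidly decaying in $k$ and thus negligible'' for the product $\expec[X_1X_3]\expec[X_2X_4]$ is slightly loose: the point is that $|2k+r|\ge r$ (and similarly $|2k+r+2\tau|\ge r$), so $(1+|2k+r|)^{-N}\le(1+r)^{-N}\le(1+\min(r,|r-\tau|))^{-N}$, which is the actual mechanism giving the required $r$-decay. Likewise, in \eqref{eq:2nd orderd_first_order_cov_2} the bound $(1+r)^{-2N}\lesssim(1+\min(r,|r-\tau|))^{-N}$ uses $1+|r-\tau|\le(1+\tau)(1+r)$, so the implicit constant depends on $\tau$; this is harmless since $\tau$ is fixed.
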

\begin{proof}
We observe that using identity
\eqref{id:decompo} one can write the expectation in \eqref{eq:2nd orderd_first_order_cov_2} (as well as \eqref{eq:2nd orderd_first_order_cov_1}) as a product of sums of random variables appearing in formula \eqref{eq:list_of_random_variables}. Multiplying the terms and applying triangle inequality yields a bound to the right hand side of \eqref{eq:2nd orderd_first_order_cov_2} given as a sum of terms
\begin{equation*}
	\big|\expec\big[(W_k^2-\expec W_k^2)((W'_{k+r})^2-\expec (W'_{k+r})^2)\big]\big|
	= 2 (\expec W_k W'_{k+r})^2,
\end{equation*}
where $W_k$ and $W'_{k+r}$ represent random variables given in \eqref{eq:list_of_random_variables} and we used Lemma \ref{lemma:gaussian_pair} for the identity. Finally, the result is obtained by applying  Proposition \ref{prop:first_order_decay} and the fact that the cross-covariance is computed at all pairs of frequencies $k_1 \in \{k,k+\tau\}$ and $k_2\in\{k+r,k+r+\tau\}$. Same reasoning applies to inequality \eqref{eq:2nd orderd_first_order_cov_2}.
\end{proof}

\subsection{2nd order backscattering} \label{subsec:2nd order} We now consider the interactions between the single and 2nd order backscattering for the case $n = m = 3$ and show that they are negligible. Recall that in this particular case $q \in L^p_{-s} (\R^3)$ for $1 < p < \infty$ and $s>0$ almost surely. According to \eqref{term:INDIfar-field}, the 2nd order backscattering $u^\infty_2(k,\theta, -\theta)$ is described by a constant multiple of
\begin{equation}
\int_{\R^3} \int_{\R^3} e^{ik \theta \cdot (x + y)} q(x) q(y) \Phi^+_k (x - y) \, dx \, dy,
\label{term:2nd order}
\end{equation}
where $ \Phi^+_k $ is the fundamental solution which satisfies the outgoing Sommerfeld radiation condition. Note that the integrals in \eqref{term:2nd order} have to be understood as a distributional pairing. The interaction to be considered now correspond to the terms \eqref{term:remainder}
with $j + l = 3$ and $j = l = 2$ and the goal is to show that they are negligible, more precisely to prove the following statement:
\begin{proposition} \label{prop:2nd orderscattering} \sl Let us assume that $\expec q = 0$. We have that, for every $\tau \geq 0$ and every $\theta \in \mathbb{S}^2$,
\[ \lim_{K \rightarrow \infty} \frac{1}{K} \int_K^{2K} k^3 u^\infty_j(k, \theta, -\theta) \overline{u^\infty_l(k + \tau, \theta, -\theta)}  \, dk = 0\]
almost surely whenever $j + l = 3$ or $j = l = 2$.
\end{proposition}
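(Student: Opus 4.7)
The plan is to treat the three cases $(j,l) \in \{(1,2),(2,1),(2,2)\}$ separately and to mimic the strategy of Subsection \ref{subsec:single}, namely reduce the almost-sure convergence to the vanishing of an appropriate Ces\`aro average of expectations via a complex-valued variant of Theorem \ref{thm:aux_ergodicity}, then handle each expectation by oscillatory-integral techniques. Throughout I will exploit the Born series representation \eqref{term:INDIfar-field}, which shows that $u_j^\infty(k,\theta,-\theta)$ is a polynomial of degree $j$ in the centred Gaussian distribution $q$; thus for $(1,2)$ and $(2,1)$ the integrand is cubic in $q$ and its expectation vanishes identically by parity, while for $(2,2)$ it is quartic and a non-trivial deterministic expectation has to be controlled separately. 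Note that the window $k \in [K,2K]$ with $K \to \infty$ eventually enters the region $k \geq k_0(\omega)$ where the Neumann series converges (see Remark \ref{re:k0}), so that the $u_j^\infty$ are almost surely well defined on the relevant range.

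For $j+l=3$ set $Z(k) := k^3 u_j^\infty(k,\theta,-\theta)\overline{u_l^\infty(k+\tau,\theta,-\theta)}$. By Gaussianity and the zero mean assumption $\expec Z(k) = 0$. Splitting $Z$ into real and imaginary parts and invoking a variant of Theorem \ref{thm:aux_ergodicity} for the window $[K,2K]$, it suffices to establish a decay estimate of the form $|\expec(Z(k)\overline{Z(k+r)})|,\,|\expec(Z(k)Z(k+r))| \lesssim (1+r)^{-\varepsilon}$ uniformly in $k$. These are $6$-point Gaussian moments, and by Isserlis' (Wick's) theorem they expand into sums of products of three covariances $K_q$. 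Substituting the integral representation \eqref{term:2nd order} for the second-order factor and the definition of $u_1^\infty$, one obtains oscillatory integrals in the spatial variables whose total phase contains factors $e^{i r\theta\cdot(\cdot)}$ arising from the shift $r$ between the two frequencies; once the $q$-supports are localized to $D$ and the singular kernel $\Phi_k^\pm$ is separated from its smooth remainder, repeated integration by parts (non-stationary phase) in the positions delivers the required decay in $r$ together with boundedness in $k$, in the same spirit as Proposition \ref{prop:first_order_decay}.

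The case $(j,l) = (2,2)$ is the main obstacle. Now $Z(k)$ is quartic in $q$ and $\expec Z(k)$ does not vanish, so we decompose $Z(k) = (Z(k)-\expec Z(k)) + \expec Z(k)$. For the centred piece the ergodic argument is exactly as above, but applied to $8$-point Gaussian moments which by Wick's theorem become sums of products of four covariances $K_q$, and one again extracts decay in the shift $r$ via non-stationary phase. For the deterministic piece we must show
\[\frac{1}{K}\int_K^{2K}\expec Z(k)\,dk \longrightarrow 0,\]
which will follow from a pointwise estimate $|\expec Z(k)| \lesssim k^{-\varepsilon}$ for some $\varepsilon>0$. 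This in turn reduces, via \eqref{term:2nd order} and Wick pairing, to bounding oscillatory double integrals in which the singular kernel $\Phi_k^+(x-y) \sim e^{ik|x-y|}/|x-y|$ is coupled with the near-diagonal structure of $K_q$ provided by Proposition \ref{prop:correlation_function}: since $n=m=3$, the leading singularity is $c_{3,3}\,\mu(x)|x-y|^{-2}$, and the product of these two singularities produces an amplitude whose $k$-dependence can be extracted by carefully combining the resolvent bounds of Theorem \ref{thm:relosvent} with stationary-phase analysis of the remaining oscillatory factors $e^{ik\theta\cdot(x+y)}$.

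The principal technical obstacle is precisely this second part of the $(2,2)$ case: handling the superposition of two kernels that are both singular at the diagonal and both oscillatory in $k$, while retaining enough $k$-decay to beat the $k^3$ prefactor. This is exactly where the dimensional restriction $n = m = 3$ enters, since for these values Proposition \ref{prop:correlation_function} supplies a specific power-law singularity $|x-y|^{-2}$ that matches the singularity of the free resolvent $\Phi_k^+$ in $\R^3$, making the arising oscillatory integrals tractable via the sharp direct-scattering estimates developed in Section \ref{sec:direct-scattering}.
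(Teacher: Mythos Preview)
Your proposal takes a route that is structurally quite different from the paper's and, more importantly, carries real gaps.

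The paper avoids ergodic arguments for these cross terms entirely. It applies Cauchy--Schwarz in $k$ to bound $\frac{1}{K}\int_K^{2K} k^3 u_j^\infty \overline{u_l^\infty}\,dk$ by the product of $(\frac{1}{K}\int_K^{2K} k^3|u_j^\infty|^2\,dk)^{1/2}$ and the analogous term for $u_l^\infty$. The $j=1$ factor stays bounded by \eqref{lim:|single|}, so everything reduces to the single statement
\[
\lim_{K\to\infty}\frac{1}{K}\int_K^{2K} k^3 |u_2^\infty(k,\theta,-\theta)|^2\,dk = 0 \quad\text{a.s.}
\]
This is then obtained from the stronger pathwise fact $\int_1^\infty k^2 |u_2^\infty|^2\,dk<\infty$ a.s., which follows from finiteness of its expectation. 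After the change of variables $(x,y)\mapsto(z,\rho,\omega)$ in \eqref{term:2nd order}, the key step is a Fourier restriction estimate (Lemma \ref{lem:fourier_restriction}) combined with Plancherel in $\rho$, reducing matters to an $L^2$-bound on the spherical average $f_\varepsilon(z,\rho)$ whose second moment is evaluated by Isserlis. No oscillatory-integral decay in $k$ is ever extracted from the second-order term.

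Your approach, by contrast, needs decay in the shift $r$ for 6- and 8-point moments, and for the $(2,2)$ deterministic part a pointwise estimate $k^3\,|\mathbb{E}(u_2^\infty(k)\overline{u_2^\infty(k+\tau)})|\lesssim k^{-\varepsilon}$. Two concrete problems:

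\emph{Stationary points.} You repeatedly invoke ``non-stationary phase'' for integrals containing $\Phi_k^+(x-y)=c\,e^{ik|x-y|}/|x-y|$. But the total phase $k[\theta\cdot(x+y)+|x-y|]$ has genuine critical points (where $(x-y)/|x-y|=-\theta$), so plain integration by parts does not give the claimed decay in $k$ or $r$. A full stationary-phase analysis would be needed, and it is not clear it yields enough to beat the $k^3$ prefactor---this is exactly the obstruction the paper's restriction/Plancherel route is designed to avoid.

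\emph{The covariance kernel.} You assert that for $n=m=3$ the leading singularity of $K_q$ is $c_{3,3}\,\mu(x)|x-y|^{-2}$. This is incorrect: Proposition \ref{prop:correlation_function}(b) with $m=n$ gives $c_{3,3}\,\mu(x)\log|x-y|$. The logarithmic (not power) singularity is precisely what makes $K_q$ integrable over $\mathbb{S}^2\times\mathbb{S}^2$ in the paper's computation of $\mathbb{E}|f_\varepsilon|^2$; a singularity as strong as $|x-y|^{-2}$ would not be.

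In short, the Cauchy--Schwarz reduction dispenses with Theorem \ref{thm:aux_ergodicity} for these terms, sidesteps the stationary-phase difficulty, and isolates a single $L^2$ estimate handled by exact Fourier tools rather than asymptotics.
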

Let us prove this proposition.
By the Cauchy--Schwarz inequality and changing variables, the modulus of \eqref{term:remainder} can be bounded by
\[ \bigg( \frac{1}{K} \int_K^{2K} k^3 |u^\infty_j(k, \theta, -\theta)|^2 \,dk  \bigg)^{1/2} \bigg( \frac{1}{K} \int_{K + \tau}^{2K +\tau} k^3 |u^\infty_l(k, \theta, -\theta)|^2  \,dk \bigg)^{1/2}. \]
Thus, in order to study the interactions between the single and 2nd order scattering is enough to consider
\[ \frac{1}{K} \int_K^{2K} k^3 |u^\infty_j(k, \theta, -\theta)|^2 \,dk \]
for any $K \geq 1$ with $j = 1, 2$. After identity \eqref{id:ergodicity} and \eqref{es:primera}, we know that
\begin{equation}
\lim_{K \rightarrow \infty} \frac{1}{K} \int_K^{2K} k^3 |u^\infty_1(k, \theta, -\theta)|^2 \,dk < \infty
\label{lim:|single|}
\end{equation}
almost surely. Therefore, in order to prove Proposition \ref{prop:2nd orderscattering}, it is enough to check that
\begin{equation}
\lim_{K \rightarrow \infty} \frac{1}{K} \int_K^{2K} k^3 |u^\infty_2(k, \theta, -\theta)|^2 \,dk = 0
\label{lim:2nd order}
\end{equation}
almost surely or alternatively,
\begin{equation}
\lim_{K \rightarrow \infty} \frac{1}{K - 1} \int_1^{K} k^3 |u^\infty_2(k, \theta, -\theta)|^2 \,dk = 0
\label{lim:2nd order'}
\end{equation}
almost surely.
Note that
\begin{equation*}
\frac{1}{K - 1} \int_1^K k^3 |u^\infty_2(k, \theta, -\theta)|^2 \,dk  =  \int_1^\infty \frac{k \mathbf{1}_{[1, K]} (k)}{K - 1} k^2 |u^\infty_2(k, \theta, -\theta)|^2 \,dk,
\end{equation*}
where $\mathbf{1}_{[1, K]}$ denotes the characteristic function of the interval $[1, K]$. Since $k \mathbf{1}_{[1, K]} (k)/(K - 1)$ converges point-wise to zero as $K$ goes to infinity, we have by the dominate convergence theorem that, if
\begin{equation*}
\int_{1}^\infty k^2 |u^\infty_2(k, \theta, -\theta)|^2 \,dk < \infty,
\end{equation*}
then \eqref{lim:2nd order'} holds. Obviously, by the continuity of the function $\theta \in \mathbb{S}^2 \mapsto u^\infty_2(k,\theta, -\theta) $, it will be enough to show that
\begin{equation}
\mathbb{E} \int_{\mathbb{S}^2} \int_1^\infty k^2 |u^\infty_2(k,\theta, -\theta)|^2 \, dk \, d\sigma(\theta) < \infty,
\label{es:boundedness}
\end{equation}
where $d\sigma$ denotes the volume form on $\mathbb{S}^2$. In order to prove \eqref{es:boundedness}, we will show that
\begin{equation}
\limsup_{\varepsilon \rightarrow 0}\mathbb{E} \int_{\mathbb{S}^2} \int_{1}^\infty k^2 |v_\varepsilon(k,\theta)|^2 \, dk \, d\sigma(\theta) < \infty
\label{es:boundednessSUP}
\end{equation}
with
\[v_\varepsilon(k, \theta) = \int_{\R^3} \int_{\R^3} e^{ik \theta \cdot (x + y)} q_\varepsilon(x) q_\varepsilon(y) \Phi^+_k (x - y) \, dx \, dy, \]
where $q_\varepsilon (\omega, x) = \langle q(\omega), \varphi_\varepsilon (x - \centerdot) \rangle$ and $\varphi_\varepsilon $ is as in \eqref{id:def_of_fepsilon}. Then, as a consequence of Fatou's lemma, we see that \eqref{es:boundedness} holds.

Let us prove \eqref{es:boundednessSUP}. Since in dimension $n = 3$, $ \Phi^+_k (x) $ is given by a constant multiple of $e^{ i k |x|}/|x|$, the approximation of the 2nd order backscattering $v_\varepsilon (k,\theta)$ is
\begin{equation}
\int_{\R^3} \int_{\R^3} e^{ik [\theta \cdot (x + y) + |x - y|]}  \frac{q_\varepsilon(x)q_\varepsilon(y)}{|x - y|} \, dx \, dy.
\label{term:2nd order-asymp}
\end{equation}
Changing variables, \eqref{term:2nd order-asymp} becomes
\[\int_{\R^3} \int_{\mathbb{S}^2} \int_0^\infty e^{ik (\theta \cdot z + \rho)}  q_\varepsilon \big( \frac{z + \rho \omega}{2} \big) q_\varepsilon \big( \frac{z - \rho \omega}{2} \big) \rho \, d\rho \,d\sigma(\omega) \, dz. \]
Denoting
\begin{equation}
f_\varepsilon (z, \rho) = \rho \mathbf{1}_{[0,\infty)}(\rho) \int_{\mathbb{S}^2} q_\varepsilon \big( \frac{z + \rho \omega}{2} \big) q_\varepsilon \big( \frac{z - \rho \omega}{2} \big) \,d\sigma(\omega),
\label{def:f_eps}
\end{equation}
we have that
\[v_\varepsilon(k,\theta) = \int_{\R^3} \int_\R e^{ik (\theta \cdot z + \rho)} f_\varepsilon(z, \rho) \, d\rho \, dz. \]
Note that that there exists an $R$ which depends on $D$ such that
\begin{equation}
|z| + |\rho| = \bigg| \frac{z + \rho \omega}{2} + \frac{z - \rho \omega}{2} \bigg| + \bigg| \frac{z + \rho \omega}{2} - \frac{z - \rho \omega}{2} \bigg| \leq  | z + \rho \omega | + | z - \rho \omega | \leq R
\label{es:support_q}
\end{equation}
whenever
\[ \frac{z + \rho \omega}{2},\, \frac{z - \rho \omega}{2} \in \bigcup_{\varepsilon \in (0,1]} \supp q_\varepsilon.  \]
Note that
\begin{equation}
\int_{\mathbb{S}^2} k^2 |v_\varepsilon(k,\theta)|^2 \, d\sigma(\theta) \sim \int_{\mathbb{S}^2} \Big| \int_{\R^3} e^{i k \theta \cdot z} T f_\varepsilon(z, k)  \, dz \Big|^2  k^2 \, d\sigma(\theta)
\label{equi:L2Sn-1}
\end{equation}
with \[ T f_\varepsilon(z, k) = \int_\R e^{ i k \rho} f_\varepsilon(z, \rho)  \, d\rho.\]

\begin{lemma}\label{lem:fourier_restriction}\sl The right hand-side of \eqref{equi:L2Sn-1} can be bounded by above as follows:
\[\int_{\mathbb{S}^2} \Big| \int_{\R^3} e^{i k \theta \cdot z} T f_\varepsilon(z, k)  \, dz \Big|^2  k^2 \, d\sigma(\theta) \lesssim \int_{\R^3} | T f_\varepsilon(z, k) |^2 \, dz \]
almost surely, where the implicit constant depends on $D$.
\end{lemma}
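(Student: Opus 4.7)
The plan is to recognize the left-hand side as the $L^2$-norm of the Fourier transform of $Tf_\varepsilon(\cdot,k)$ restricted to the sphere $\{|\xi|=k\}$ and then derive the required restriction estimate as the dual of an extension estimate. First, note that by \eqref{es:support_q} the function $z \mapsto Tf_\varepsilon(z,k)$ is supported in a fixed ball $B_R \subset \R^3$ whose radius $R$ depends only on $D$ (since $f_\varepsilon(z,\rho)$ is defined by \eqref{def:f_eps} and vanishes unless $|z|+|\rho|\le R$). Writing $\xi = k\theta$ and using $d\sigma_k(\xi) = k^{2}d\sigma(\theta)$ for the surface measure on the sphere $\{|\xi|=k\}$, the left-hand side of the lemma becomes a constant multiple of
\[
\int_{|\xi|=k} |\widehat{Tf_\varepsilon(\cdot,k)}(\xi)|^2 \, d\sigma_k(\xi),
\]
so the estimate is exactly a Fourier restriction inequality on the sphere of radius $k$ for a function compactly supported in $B_R$.

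The plan is to obtain this restriction inequality as the adjoint of an extension estimate furnished by Theorem 7.1.26 in \cite{MR1065993}, the same ingredient used earlier in the proof of \eqref{es:im_part}. That result yields, on the unit sphere,
\[
\Bigl\| \int_{|\eta|=1} e^{iy\cdot\eta} F(\eta)\, d\sigma(\eta) \Bigr\|_{L^2(|y|<\rho)}^2 \lesssim \rho\, \|F\|_{L^2(S^2)}^2
\]
uniformly in $\rho\ge 1$. Setting $\xi = k\eta$, $y = kx$, and tracking the Jacobians one obtains the rescaled version
\[
\Bigl\| \int_{|\xi|=k} e^{ix\cdot\xi} G(\xi)\, d\sigma_k(\xi) \Bigr\|_{L^2(|x|<R)}^2 \lesssim R\, \|G\|_{L^2(S_k)}^2,
\]
with an implicit constant independent of $k$ (for $k$ bounded away from $0$), since the factors of $k$ coming from the two dilations cancel exactly.

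Dualising this extension bound against any $g\in L^2(\R^3)$ supported in $B_R$ gives the restriction estimate
\[
\int_{|\xi|=k} |\widehat{g}(\xi)|^2\, d\sigma_k(\xi) \lesssim \|g\|_{L^2(\R^3)}^2,
\]
with implicit constant depending only on $R$, hence only on $D$. Applying this to $g = Tf_\varepsilon(\cdot,k)$ yields the statement of the lemma. The only genuinely nontrivial point in the argument is the $k$-uniform bookkeeping in the rescaling step; once one is careful to match the radius $kR$ in Hörmander's theorem against the Jacobians of the two changes of variables, the $k$-factors cancel and the compact support of $Tf_\varepsilon(\cdot,k)$ in a ball independent of $\varepsilon$ and $k$ does the rest.
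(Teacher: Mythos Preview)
Your proof is correct and follows the same overall strategy as the paper: recognise the left-hand side as the restriction norm $\|\widehat{Tf_\varepsilon(\cdot,k)}\|_{L^2(S_k)}^2$, pass to the dual extension estimate, and use the compact support in $B_R$ established in \eqref{es:support_q}. The one genuine difference is in how the extension bound
\[
\Bigl\| \int_{S_k} e^{iz\cdot\xi} g(\xi)\, d\sigma_k(\xi) \Bigr\|_{L^2(|z|<R)}^2 \lesssim R\, \|g\|_{L^2(S_k)}^2
\]
is obtained. You import it from H\"ormander's Theorem~7.1.26 (already cited in the proof of \eqref{es:im_part}) and rescale; the paper instead proves this inequality from scratch, by a partition of unity $\{\chi_j\}_{j=1}^6$ on $S_k$ adapted to coordinate caps, parametrising each cap as a graph over $\R^2$, and applying the two-dimensional Plancherel identity in the tangential variables followed by a crude integration in the remaining variable over $|z_l|<R$. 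Your route is shorter and reuses an ingredient already in play; the paper's route is more self-contained and makes the $k$-uniformity of the constant completely transparent without any rescaling bookkeeping. Both give the same inequality with the same dependence on~$R$.
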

\begin{proof}
Start by noting  that the term to be estimated by above can be rewritten as
\begin{equation}
\int_{S_k} \Big| \int_{\R^3} e^{i \theta \cdot z} T f_\varepsilon(z, k)  \, dz \Big|^2 \, d\sigma_k(\theta)
\label{term:L2Sk}
\end{equation}
with $S_k = \{ x \in \R^3 : |x| = k  \}$ and $d\sigma_k$ denoting its volume form. The term \eqref{term:L2Sk} is equivalent to the square of the $L^2$ norm of the Fourier transform of $T f_\varepsilon(\centerdot, k)$ restricted to $S_k$. We will estimate it by duality: let $g$ be a smooth function on $S_k$, then
\[\int_{S_k} g(\theta) \int_{\R^3} e^{i \theta \cdot z} T f_\varepsilon(z, k)  \, dz \, d\sigma_k(\theta) = \int_{\R^3}  \int_{S_k}  e^{i \theta \cdot z} g(\theta)  \, d\sigma_k(\theta)\, T f_\varepsilon(z, k)  \, dz. \]
Since $\supp T f_\varepsilon(\centerdot, k) \subset \{ |z| < R \}$ with $R$ as in \eqref{es:support_q}, we have by Cauchy-Schwarz that
\begin{equation}
\begin{aligned}
\Big| \int_{S_k} g(\theta) \int_{\R^3} &e^{i \theta \cdot z} T f_\varepsilon(z, k)  \, dz \, d\sigma_k(\theta) \Big| \\
& \leq \Big\| \int_{S_k}  e^{i \theta \cdot z} g(\theta)  \, d\sigma_k(\theta) \Big\|_{L^2(|z|<R)} \| T f_\varepsilon(\centerdot, k) \|_{L^2}
\end{aligned}
\label{in:duality}
\end{equation}
almost surely. Consider $\{ \chi_j : j = 1, \dots, 6 \}$ a partition of unity of $S_k$ subordinated to the sets
\begin{align*}
\Gamma_{2l} = \{ x \in S_k :  2 \sqrt{3} x_{l} > k \}, & &
\Gamma_{2l - 1} = \{ x \in S_k :  2 \sqrt{3} x_{l} < -k \}
\end{align*}
for $l = 1, 2, 3$. Then,
\begin{equation}
\int_{S_k}  e^{i \theta \cdot z} g(\theta)  \, d\sigma_k(\theta) = \sum_{j = 1}^6 \int_{\R^2}  e^{i \vartheta_j(y) \cdot z} g_j(\vartheta_j(y)) \frac{k}{\sqrt{k^2 - |y|^2}} \, dy,
\label{id:partition}
\end{equation}
where $g_j = \chi_j g$ and
\begin{align*}
& \vartheta_1(y) = (- \sqrt{k^2 - |y|^2}, y_1, y_2), & & \vartheta_2(y) = (\sqrt{k^2 - |y|^2}, y_1, y_2), \\
 & \vartheta_3(y) = (y_1, - \sqrt{k^2 - |y|^2}, y_2),& & \vartheta_4(y) = (y_1, \sqrt{k^2 - |y|^2}, y_2), \\
 & \vartheta_5(y) = (y_1, y_2, - \sqrt{k^2 - |y|^2}), & & \vartheta_6(y) = (y_1, y_2, \sqrt{k^2 - |y|^2}).
\end{align*}
Note that every term on the sum of \eqref{id:partition} is a multiple of the two-dimensional Fourier transform of
\[y \longmapsto e^{i (-1)^{j(l)} z_l \sqrt{k^2 - |y|^2}} g_{j(l)}(\vartheta_{j(l)}(y)) \frac{k}{\sqrt{k^2 - |y|^2}}\]
evaluated at $\widehat{z_l}$, where $\widehat{z_1} = (z_2, z_3), \widehat{z_2} = (z_1, z_3) $ and $\widehat{z_3} = (z_1, z_2)$, and $j(l)$ stand for $2l-1$ or $2l$ with $l = 1,2,3$.

Letting $d\widehat{z_1}, d\widehat{z_2}$ and $d\widehat{z_3}$ denote $dz_2 dz_3, dz_1 dz_3 $ and $dz_1 dz_2$ respectively, we have, by the Plancherel identity in $\R^2$, that
\begin{equation}
\begin{aligned}
\int_{\R^2} \Big| \int_{\R^2}  e^{i \vartheta_{j(l)}(y) \cdot z} g_{j(l)}(\vartheta_{j(l)}(y)) \frac{k}{\sqrt{k^2 - |y|^2}} \, dy \Big|^2 \, d\widehat{z_l} & \lesssim \int_{\R^2}  |g_{j(l)}(\vartheta_{j(l)}(y))|^2 \frac{k}{\sqrt{k^2 - |y|^2}} \, dy \\
& \leq \int_{S_k}  | g(\theta) |^2  \, d\sigma_k(\theta),
 \end{aligned}
\label{es:plancherel}
\end{equation}
since
\[\frac{k}{\sqrt{k^2 - |y|^2}} < 2 \sqrt{3} \qquad \text{for}\, y \in \supp g_{j(l)} \circ \vartheta_{j(l)}. \]
Therefore, from \eqref{id:partition} and \eqref{es:plancherel}, we conclude that
\begin{equation}
\int_{|z| < R} \Big| \int_{S_k}  e^{i \theta \cdot z} g(\theta)  \, d\sigma_k(\theta) \Big|^2 \, dz \lesssim R \int_{S_k}  | g(\theta) |^2  \, d\sigma_k(\theta).
\label{in:extension}
\end{equation}
Finally, by duality, we can ensure that \eqref{term:L2Sk} is almost surely bounded by
\[ \| T f_\varepsilon(\centerdot, k) \|_{L^2}^2 \]
with a constant which depends on $R$. Therefore, the lemma is proven.
\end{proof}

After \eqref{equi:L2Sn-1} and Lemma \ref{lem:fourier_restriction} we obtain
\begin{equation*}
\int_{\mathbb{S}^2} \int_1^\infty k^2 |v_\varepsilon(k,\theta)|^2 \,dk \, d\sigma(\theta) \lesssim \int_{\R^3} \int_1^\infty | T f_\varepsilon(z, k) |^2  \, dk\, dz
\end{equation*}
almost surely 
with an implicit constant depending on the domain $D \subset \R^3$. Note that $T f_\varepsilon$ is a constant multiple of the inverse Fourier transform of $f_\varepsilon$ in the variable $\rho$, so we have that
\begin{equation}
\int_{\mathbb{S}^2} \int_1^\infty k^2 |v_\varepsilon(k,\theta)|^2 \,dk \, d\sigma(\theta) \lesssim \int_{\R^3} \int_\R |f_\varepsilon(z, \rho) |^2  \, d\rho\, dz
\label{es:after_restriction}
\end{equation}
almost surely 
by the Plancherel identity. By \eqref{es:after_restriction} and \eqref{es:support_q}, we can conclude that
\begin{equation}
\limsup_{\varepsilon \rightarrow 0} \mathbb{E} \int_{\mathbb{S}^2} \int_1^\infty k^2 |v_\varepsilon(k,\theta)|^2 \, dk \, d\sigma(\theta) \lesssim \lim_{\varepsilon \rightarrow 0} \int_{|z| < R} \int_{|\rho| < R} \mathbb{E} |f_\varepsilon(z, \rho) |^2 \, d\rho\, dz,
\label{es:limsup-lim}
\end{equation}
provided that the limit on the right-hand side exists. We now show that the limit exists and this equals
\begin{align*}
\int_{|z| < R} \int_{0 < \rho < R} \rho^2 \Bigg[& \int_{\mathbb{S}^2} \int_{\mathbb{S}^2} K_q \bigg( \frac{z + \rho \omega}{2} , \frac{z - \rho \omega}{2} \bigg) K_q \bigg( \frac{z + \rho \theta}{2}, \frac{z - \rho \theta}{2} \bigg) \,d\sigma(\omega) \,d\sigma(\theta)\\
& +\int_{\mathbb{S}^2} \int_{\mathbb{S}^2} K_q \bigg( \frac{z + \rho \omega}{2}, \frac{z + \rho \theta}{2} \bigg) K_q \bigg( \frac{z - \rho \omega}{2}, \frac{z - \rho \theta}{2} \bigg) \,d\sigma(\omega) \,d\sigma(\theta)\\
& +\int_{\mathbb{S}^2} \int_{\mathbb{S}^2} K_q \bigg( \frac{z + \rho \omega}{2}, \frac{z - \rho \theta}{2} \bigg) K_q \bigg( \frac{z - \rho \omega}{2}, \frac{z + \rho \theta}{2} \bigg) \,d\sigma(\omega) \,d\sigma(\theta) 
\Bigg]\, d\rho\, dz.
\end{align*}
Before proving this claim, note that this already ensures that \eqref{es:boundednessSUP} holds, since $K_q(x, y) = c_{n,m}\, \mu(x) \log|x - y| + F_\alpha(x, y)$, according to the Proposition \ref{prop:correlation_function}, and hence $K_q$ is integrable over $\mathbb{S}^2 \times \mathbb{S}^2$.

Finally, we show that the limit of the right-hand side of \eqref{es:limsup-lim} exists and we compute it. Start by noting that \eqref{def:f_eps} makes $\mathbb{E} |f_\varepsilon (z, \rho)|^2$ be equal to
\begin{equation*}
\rho^2 \mathbf{1}_{[0,\infty)}(\rho) \int_{\mathbb{S}^2} \int_{\mathbb{S}^2} \mathbb{E} \big[ q_\varepsilon \big( \frac{z + \rho \omega}{2} \big) q_\varepsilon \big( \frac{z - \rho \omega}{2} \big) q_\varepsilon \big( \frac{z + \rho \theta}{2} \big) q_\varepsilon \big( \frac{z - \rho \theta}{2} \big) \big] \,d\sigma(\omega) \,d\sigma(\theta).
\end{equation*}
By the Isserlis' theorem, this equals
\begin{align*}
\rho^2 \mathbf{1}_{[0,\infty)}(\rho) &\Bigg[ \int_{\mathbb{S}^2} \int_{\mathbb{S}^2} \mathbb{E} \bigg[ q_\varepsilon \bigg( \frac{z + \rho \omega}{2} \bigg) q_\varepsilon \bigg( \frac{z - \rho \omega}{2} \bigg) \bigg] \mathbb{E} \bigg[q_\varepsilon \bigg( \frac{z + \rho \theta}{2} \bigg) q_\varepsilon \bigg( \frac{z - \rho \theta}{2} \bigg) \bigg] \,d\sigma(\omega) \,d\sigma(\theta)\\
& +\int_{\mathbb{S}^2} \int_{\mathbb{S}^2} \mathbb{E} \bigg[ q_\varepsilon \big( \frac{z + \rho \omega}{2} \bigg) q_\varepsilon \bigg( \frac{z + \rho \theta}{2} \bigg) \bigg] \mathbb{E} \bigg[q_\varepsilon \bigg( \frac{z - \rho \omega}{2} \bigg) q_\varepsilon \bigg( \frac{z - \rho \theta}{2} \bigg) \bigg] \,d\sigma(\omega) \,d\sigma(\theta)\\
& +\int_{\mathbb{S}^2} \int_{\mathbb{S}^2} \mathbb{E} \bigg[ q_\varepsilon \bigg( \frac{z + \rho \omega}{2} \bigg) q_\varepsilon \bigg( \frac{z - \rho \theta}{2} \bigg) \bigg] \mathbb{E} \bigg[q_\varepsilon \bigg( \frac{z - \rho \omega}{2} \bigg) q_\varepsilon \bigg( \frac{z + \rho \theta}{2} \bigg) \bigg] \,d\sigma(\omega) \,d\sigma(\theta) 
\Bigg].
\end{align*}
Recalling \eqref{id:kernel_cov}, it is a simple observation to note that $ \mathbb{E} [ q_\varepsilon (x) q_\varepsilon (y) ] $ converges to $ K_q(x, y) $ point-wise as $\varepsilon$ vanishes. Moreover, using again that $K_q(x, y) = c_{n,m}\, \mu(x) \log|x - y| + F_\alpha(x, y)$, we can check that
\[\big| \mathbb{E} [ q_\varepsilon (x) q_\varepsilon (y) ] \big| \lesssim \big|\log |x - y|\big| \mathbf{1}_{\{ |x - y| < 1 \}} (x, y) + 1 \]
assuming $\supp \varphi \subset \{ |x| \leq 1/4 \}$, which can always be assumed. Hence, by applying the dominated convergence theorem to the integral we have that the limit of the right hand-side of \eqref{es:limsup-lim} exists and is the one claimed above.

To sum up, we have shown that \eqref{es:boundednessSUP} holds and consequently,
\begin{equation*}
\lim_{K \rightarrow \infty} \frac{1}{K} \int_K^{2K} k^3 |u^\infty_2(k, \theta, -\theta)|^2 \,dk = 0
\end{equation*}
almost surely for every $\theta \in \mathbb{S}^2$. This ends the proof of Proposition \ref{prop:2nd orderscattering}.

\subsection{Multiple backscattering}\label{subsec:higher_order}
%As we said at the beginning of this section, we aim at reconstructing $\mu$ from
%\[\lim_{K \rightarrow \infty} \frac{1}{K - k_0} \int_{k_0}^K k^3 u^\infty(k, \theta, -\theta) \overline{u^\infty(k + \tau, \theta, -\theta)}  \, dk, \quad \text{for all} \, \tau > 0 \, \text{and almost all} \, \theta \in \mathbb{S}^2.\]

Again, we only consider the effects of multiple scattering under the assumptions $m = n = 3$. In that case, the realizations of $q$ are almost surely in $L^p_{-s}(\R^3)$ for $1 < p < \infty$ and $s>0$. According to the previous decomposition on single and 2nd order backscattering, we can write
\begin{align*}
\frac{1}{K} & \int_K^{2K} k^3 u^\infty(k, \theta, -\theta) \overline{u^\infty(k + \tau, \theta, -\theta)}  \, dk \\
& = \sum_{1 \leq j \leq 2} \sum_{1 \leq l \leq 2} \frac{1}{K} \int_K^{2K} k^3 u^\infty_j(k, \theta, -\theta) \overline{u^\infty_l(k + \tau, \theta, -\theta)}  \, dk \\
& \quad + \frac{1}{K} \int_K^{2K} k^3 \Big(u^\infty(k, \theta, -\theta) - \sum_{1 \leq j \leq 2} u^\infty_j(k, \theta, -\theta) \Big) \overline{u^\infty(k + \tau, \theta, -\theta)}  \, dk \\
& \quad + \sum_{1 \leq j \leq 2} \frac{1}{K} \int_K^{2K} k^3 u^\infty_j(k, \theta, -\theta) \Big( \overline{u^\infty(k + \tau, \theta, -\theta)} - \sum_{1 \leq l \leq 2} \overline{u^\infty_l(k + \tau, \theta, -\theta)} \Big) \, dk.
\end{align*}
The first term on the right-hand side of the previous inequality corresponds to the single and 2nd order backscattering terms studied in the sections \ref{subsec:single} and \ref{subsec:2nd order}. The other two terms describe the multiple backscattering and will be shown here that they are negligible, that is, they vanish as $K$ grows. Applying the Cauchy--Schwarz inequality and changing variable as in the proof of Proposition \ref{prop:2nd orderscattering}, we only need to check that
\begin{align}
& \lim_{K \rightarrow \infty} \frac{1}{K} \int_K^{2K} k^3 |u^\infty(k , \theta, -\theta)|^2  \, dk < \infty, \label{term:full_scattering} \\
& \lim_{K \rightarrow \infty} \frac{1}{K} \int_K^{2K} k^3 \Big| u^\infty(k, \theta, -\theta) - \sum_{1 \leq l \leq 2} u^\infty_l(k, \theta, -\theta) \Big|^2 \, dk = 0 \label{term:multiple_scattering}
\end{align}
hold almost surely.
Note that \eqref{term:full_scattering} follows from \eqref{term:multiple_scattering}, \eqref{lim:|single|} and \eqref{lim:2nd order}. Thus, it is enough to show that \eqref{term:multiple_scattering} holds. This is a straight consequence of the next lemma:
\begin{lemma}
\label{lem:residual_decay}
\sl Suppose that $\expec q = 0$. We have that
\[ \sup_{\theta \in \mathbb{S}^2} \Big| u^\infty(k, \theta, -\theta) - \sum_{1 \leq l \leq 2} u^\infty_l(k, \theta, -\theta) \Big| = o(k^{-2(1 - 3s)})\]
almost surely.
\end{lemma}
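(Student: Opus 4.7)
The plan is to estimate the tail sum $u^\infty - u_1^\infty - u_2^\infty = \sum_{l\ge 3} u_l^\infty$ uniformly in $\theta \in \mathbb{S}^2$ by packaging it as a single $H^{s,-\delta}_k$--$H^{-s,\delta}_k$ duality pairing and then invoking the boundedness results from Section~\ref{sec:direct-scattering}.

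First, using $u_l = (\mathcal{R}_k^+ \circ q)^l u_0$ with $u_0(y) = e^{ik\theta\cdot y}$ and writing $u = u_0 + u_{\rm sc}$ for the total wave, the identity $\sum_{l \geq 2} (\mathcal{R}_k^+ \circ q)^l u_0 = (\mathcal{R}_k^+ \circ q)^2 u$ lets me rewrite
\[\sum_{l \ge 3} u_l^\infty(k,\theta,-\theta) = c_n \int_{\R^3} e^{ik\theta \cdot y}\, q(y)\, (\mathcal{R}_k^+ \circ q)^2 u(y)\,dy.\]
Fixing $\chi \in C^\infty_0(\R^3)$ with $\chi \equiv 1$ on $D$ and using $\supp q \subset D$ almost surely, this equals $c_n \langle q\, (\mathcal{R}_k^+ \circ q)^2 u,\, \chi e^{ik\theta\cdot \centerdot}\rangle$, which I would then bound via the duality
\[\Big| \textstyle{\sum_{l \ge 3}} u_l^\infty \Big| \lesssim \| q\, (\mathcal{R}_k^+ \circ q)^2 u \|_{H^{-s,\delta}_k} \cdot \| \chi e^{ik\theta\cdot \centerdot} \|_{H^{s,-\delta}_k}.\]

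A direct Fourier-side computation then yields the uniform bound $\| \chi e^{ik\theta\cdot \centerdot} \|_{H^{s,-\delta}_k} \lesssim k^s$ (the Fourier transform is concentrated near $\xi = k\theta$, where $(k^2 + |\xi|^2)^{s/2}$ is of order $k^s$), and analogously $\| u_0 \|_{H^{s,-\delta}_k} \lesssim k^s$. For the operator factor, Theorem~\ref{thm:relosvent} combined with Proposition~\ref{prop:multiplication} yields, for $k \geq k_0(\omega)$ almost surely, $\rho_k := \|\mathcal{R}_k^+ \circ q \|_{H^{s,-\delta}_k \to H^{s,-\delta}_k} = o(k^{-(1-2s)})$; the Neumann series then gives $\|u\|_{H^{s,-\delta}_k} \leq \|u_0\|_{H^{s,-\delta}_k}/(1-\rho_k) \lesssim k^s$. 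Chaining these with the multiplication bound $\|q\cdot\|_{H^{s,-\delta}_k \to H^{-s,\delta}_k} = o(1)$ from Proposition~\ref{prop:multiplication} applied one more time on the outside,
\[\| q\, (\mathcal{R}_k^+ \circ q)^2 u \|_{H^{-s,\delta}_k} \leq o(1) \cdot \rho_k^2 \cdot \| u \|_{H^{s,-\delta}_k} = o\bigl(k^{-2(1-2s)+s}\bigr),\]
and multiplying by the $k^s$ test-function factor yields $|\sum_{l \ge 3} u_l^\infty| = o(k^{-2 + 6s}) = o(k^{-2(1-3s)})$, uniformly in $\theta$.

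The main obstacle is the careful bookkeeping of the \emph{little-o}: one must verify that the multiplication operator norm in Proposition~\ref{prop:multiplication} decays \emph{strictly} (not merely stays bounded) as $k \to \infty$, so that the extra smallness factor can legitimately be separated from the $k^{-(1-2s)}$ resolvent gain and compounded with $\rho_k^2$. This rests on the mechanism built into the proof of Proposition~\ref{prop:multiplication}: with $\varepsilon = k^{-1/2}$ in the splitting $W = W^\sharp + W^\flat$ of the Bessel lift of $q$, the residual $\|W^\flat\|_{L^p}$ vanishes as $k \to \infty$ by density. The randomness enters only through the almost-sure finiteness of $k_0(\omega)$ via Remark~\ref{re:k0}; once $\omega$ is fixed, the entire argument is a deterministic estimate.
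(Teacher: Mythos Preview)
Your proof is correct and follows essentially the same strategy as the paper: bound the Neumann-series tail via two applications of the operator bound $\rho_k=\|\mathcal{R}_k^+\circ q\|_{H^{s,-\delta}_k\to H^{s,-\delta}_k}=o(k^{-(1-2s)})$, together with two $k^s$ factors coming from the plane wave. The paper writes the residual as $u_{\rm sc}-u_1$ and passes through Lemma~\ref{lem:KP} (Kato--Ponce) and Lemma~\ref{lem:compact} to land in $H^{s,-\delta}_k$, whereas you stay in the $k$-dependent weighted spaces throughout and invoke Proposition~\ref{prop:multiplication} directly for the outer $q$; these are equivalent packagings of the same estimate and yield the identical exponent $-2(1-3s)$.

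One small technical point: you write $\|u_0\|_{H^{s,-\delta}_k}\lesssim k^s$ for the bare plane wave $u_0(y)=e^{ik\theta\cdot y}$, which is only finite if $\delta>n/2=3/2$. The paper instead uses $\|\chi u_0\|_{H^{s,-\delta}_k}$, which is valid for any $\delta>1/2$; this is harmless because $q u_0=q\chi u_0$ on account of $\supp q\subset D$, so every occurrence of $u_0$ inside $\mathcal{R}_k^+\circ q$ may be replaced by $\chi u_0$. Either choosing $\delta>3/2$ or inserting the cutoff fixes this.
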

\begin{proof}
By \eqref{term:FULLfar-filed} and \eqref{term:INDIfar-field}, and then Lemma \ref{lem:KP}, we have that
\begin{align*}
\Big| u^\infty(k, \theta, -\theta) - \sum_{1 \leq l \leq 2} u^\infty_l(k, \theta, -\theta) \Big| & \lesssim \Big| \int_{\R^3} e^{ik\theta \cdot y} q(y) (u_{\rm sc}(y) - u_1(y)) \, dy \Big| \\
& \lesssim \| q \|_{L^p_{-s}} \| e^{i k \theta \cdot y}  \chi \|_{H^s} \| \chi (u_{\rm sc} - u_1) \|_{H^s},
\end{align*}
where $\chi$ is a smooth function with compact support such that $\chi(x) = 1$ for every $x$ in a ball containing $D$. A direct computation shows that $ \| e^{i k \theta \cdot y}  \chi \|_{H^s} = \mathcal{O}(k^s)$. On the other hand,
\[\| \chi (u_{\rm sc} - u_1) \|_{H^s} \leq \| \chi (u_{\rm sc} - u_1) \|_{H^s_k} \lesssim \| u_{\rm sc} - u_1 \|_{H^{s, - \delta}_k}\]
by Lemma \ref{lem:compact}. By Theorem \ref{thm:relosvent} and Proposition \ref{prop:multiplication}, we know that the operator $\mathcal{R}^+_k \circ q$ maps $H^{s,-\delta}_k(\R^3)$ into itself, for $k \geq k_0(\omega)$ almost surely, with a norm 
\[\|\mathcal{R}^+_k \circ q\|_{H^{s,-\delta}_k \rightarrow H^{s,-\delta}_k} = o(k^{-(1-2s)}).\]
Therefore, $u_{\rm sc} - u_1 = \sum_{j > 1} (\mathcal{R}^+_k \circ q)^j u_0$ can be bounded as follows:
\[\| u_{\rm sc} - u_1 \|_{H^{s, - \delta}_k} \leq \sum_{j > 1} \|\mathcal{R}^+_k \circ q\|^j_{H^{s,-\delta}_k \rightarrow H^{s,-\delta}_k} \| \chi u_0 \|_{H^{s,-\delta}_k} = o( k^s k^{-2(1 - 2s)} ) \]
almost surely.
This concludes the proof of this lemma.
\end{proof}

\begin{proof}[Proof of Theorem \ref{thm:main_theorem_rigorous} in the case $\mathbb{E}q=0$.]
Following the discussion in the beginning of this section we notice that we finally possess all necessary tools to prove the main theorem in the case $\mathbb{E}q = 0$. Having established the well-posedness of the forward problem and the measurement in the sections \ref{sec:random_potential} and \ref{sec:direct-scattering}, we have shown in Corollary \ref{cor:first_order_data} that the first order contribution $M_1(\tau,\theta)$ coincides almost surely (and up to a multiplicative constant) with $\widehat{\mu}(2\tau \theta)$ at fixed $\tau\geq 0$ and $\theta\in{\mathbb S}^2$. In Proposition \ref{prop:2nd orderscattering} and this section, we have proven that in $\R^3$ the contribution from the second and higher order scattering vanishes in $M(\tau,\theta,-\theta)$. We can now proceed similarly to the proof of Theorem \ref{thm:main_theorem_born_approx} and repeat the measurement at a countable dense set
$\{(\tau_j,\theta_j)\}_{j\in\N} \subset \R_+ \times \mathbb{S}^2$. Finally, continuation from a dense set yields the result.
\end{proof}

\subsection{Non-zero-mean potentials}\label{sec:non-zero-mean}
This section is devoted to extend the proof in sections \ref{subsec:single}, \ref{subsec:2nd order} and \ref{subsec:higher_order} to the case of non-zero-mean potentials. We proceed pointing out the places where some changes have to be made.

\subsubsection{Single backscattering} As in the section \ref{subsec:single},  the first goal is to show that the identity \eqref{id:ergodicity} holds when $\mathbb{E} q \neq 0$. As in the zero-mean case, this will be a consequence of the fact that
\begin{equation}
\lim_{K \to \infty} \frac{1}{K} \int_K^{2K} k^m (W_k^2 - \mathbb{E}W_k^2) \,dk = 0
\label{lim:vanishing}
\end{equation}
almost surely for $W_k$ as in \eqref{eq:list_of_random_variables}. The difference now is that $\mathbb{E}W_k \neq 0$, so we write
\[W_k^2 - \mathbb{E}W_k^2 = Z_k^2 - \mathbb{E}Z_k^2 + 2\mathbb{E}W_k Z_k \]
with $Z_k = W_k - \mathbb{E} W_k$. We will prove that \eqref{lim:vanishing} holds showing that
\begin{align}
\lim_{K \to \infty} &\frac{1}{K} \int_K^{2K} k^m (Z_k^2 - \mathbb{E}Z_k^2) \,dk = 0 \quad {\rm and}
\label{lim:vanishingZk} \\
\lim_{K \to \infty} &\frac{1}{K} \int_K^{2K} k^m \mathbb{E}W_k Z_k \,dk = 0
\label{lim:crossed}
\end{align}
almost surely. In order to check \eqref{lim:vanishingZk}, we use Theorem \ref{thm:aux_ergodicity} verifying that
there exists an $\varepsilon > 0$ such that
\begin{equation}\label{es:sufCONDZ_k}
\big|\mathbb{E} \big(k^m (Z_k^2 - \mathbb{E}Z_k^2 ) (k + r)^m (Z_{k + r}^2 - \mathbb{E}Z_{k + r}^2 ) \big)\big| \lesssim (1 + r)^{-\varepsilon}
\end{equation}
for all $k \geq 1$ and $r \geq 0$. To do so, we observe that $(Z_k, Z_{k + r})$ is a centred Gaussian random vector and use Lemma \ref{lemma:gaussian_pair}. Thus, instead of condition \eqref{es:sufCONDZ_k} we just need to see that
there exists $\varepsilon > 0$ such that
\begin{equation}
	\label{eq:decay_of_Zk}
	|\mathbb{E} (k^{m/2}(k+r)^{m/2}Z_k Z_{k + r}) | \lesssim (1 + r)^{-\varepsilon}.
\end{equation}
The inequality \eqref{eq:decay_of_Zk} follows from \eqref{es:correlUk1Uk2}, \eqref{es:correlVk1Vk2} and \eqref{es:correlUk1Vk2} in the next proposition, and therefore \eqref{lim:vanishingZk} holds.

\begin{proposition} \sl Let $q$ be the potential given by Definition \ref{def:ml_iso}. There exists a known constant $c_{n,m}$, depending on $n$ and $m$, such that
\begin{equation}
\mathbb{E}(u_1^ \infty(k, \theta, -\theta) \overline{u_1^ \infty(k + \tau, \theta, -\theta)}) = c_{n,m}  k^{-m} \widehat{\mu} (2\tau\theta) + \mathcal{O}(k^{-m-1})
\label{id:approxCORRELATIONnonzero}
\end{equation}
for $k \geq 1/2$ and $\tau \geq 0$. Moreover, for all $k_1, k_2 > 0$, we have that
\begin{align}
\label{es:correlUk1Uk2}
\big|\mathbb{E}\big( (U_{k_1} - \mathbb{E}U_{k_1}) (U_{k_2} - \mathbb{E}U_{k_2}) \big) \big| &\lesssim k_1^{-m} (1 + |k_1 - k_2|)^{-N}\\
\label{es:correlVk1Vk2}
\big|\mathbb{E}\big( (V_{k_1} - \mathbb{E}V_{k_1}) (V_{k_2} - \mathbb{E}V_{k_2}) \big) \big| &\lesssim k_1^{-m}(1 + |k_1 - k_2|)^{-N}\\
\label{es:correlUk1Vk2}
\big|\mathbb{E}\big( (U_{k_1} - \mathbb{E}U_{k_1}) (V_{k_2} - \mathbb{E}V_{k_2}) \big) \big| &\lesssim k_1^{-m}(1 + |k_1 - k_2|)^{-N}
\end{align}
for all $N \in \N$.
\end{proposition}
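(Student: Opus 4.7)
The plan is to reduce the statement to the zero-mean case already treated in Proposition \ref{prop:first_order_decay}. The key observation is that the centered process $\tilde q := q - \mathbb{E} q$ is itself a generalized Gaussian field with the same covariance operator $C_{\tilde q} = C_q$, and therefore has the same local strength $\mu$. Hence $\tilde q$ is microlocally isotropic of order $-m$ in $D$ with zero mean, which is exactly the hypothesis of Proposition \ref{prop:first_order_decay}.

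First, I would write the single backscattering as $u_1^\infty(k,\theta,-\theta) = c_n \langle q, e^{i2k\theta\cdot\,\centerdot}\rangle$ and split
\[
\langle q, e^{i2k\theta\cdot y}\rangle = \langle \mathbb{E} q, e^{i2k\theta\cdot y}\rangle + \langle \tilde q, e^{i2k\theta\cdot y}\rangle,
\]
so that $U_k - \mathbb{E} U_k$ and $V_k - \mathbb{E} V_k$ depend only on $\tilde q$. The covariance estimates \eqref{es:correlUk1Uk2}, \eqref{es:correlVk1Vk2} and \eqref{es:correlUk1Vk2} then follow at once by applying estimates \eqref{es:correlREAL}, \eqref{es:correlIMAGINARY} and \eqref{es:correlREAL-IMAGI} of Proposition \ref{prop:first_order_decay} to $\tilde q$.

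For the identity \eqref{id:approxCORRELATIONnonzero}, I expand
\[
\mathbb{E}\big(u_1^\infty(k,\theta,-\theta)\overline{u_1^\infty(k+\tau,\theta,-\theta)}\big) = c_n^2\Big[\mathbb{E}\big(\langle \tilde q, e^{i2k\theta\cdot y}\rangle \overline{\langle \tilde q, e^{i2(k+\tau)\theta\cdot y}\rangle}\big) + \langle \mathbb{E} q, e^{i2k\theta\cdot y}\rangle \overline{\langle \mathbb{E} q, e^{i2(k+\tau)\theta\cdot y}\rangle}\Big],
\]
since the cross terms vanish. The first term contributes $c_{n,m} k^{-m}\widehat{\mu}(2\tau\theta) + \mathcal{O}(k^{-m-1})$ by \eqref{id:approxCORRELATION} applied to $\tilde q$. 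For the second (deterministic) term, I note that $\mathbb{E} q \in C^\infty_0(\R^n;\R)$ with $\supp(\mathbb{E} q)\subset D$, so by the non-stationary phase principle (integrating by parts arbitrarily many times against the non-vanishing phase $2k\theta\cdot y$ on $\supp(\mathbb{E} q)$) one has $|\langle \mathbb{E} q, e^{i2k\theta\cdot y}\rangle|\lesssim_N (1+k)^{-N}$ for every $N\in\N$, and similarly at frequency $k+\tau$. Hence the product is of arbitrary polynomial decay and in particular $\mathcal{O}(k^{-m-1})$, which absorbs into the remainder.

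The argument is essentially a bookkeeping exercise, and the only mildly delicate point is verifying that $C_{\tilde q} = C_q$ with the same principal symbol, so that $\tilde q$ genuinely qualifies as a zero-mean microlocally isotropic field of the same order and local strength; this is immediate from the definition of the covariance operator through $\mathrm{Cov}(\langle q,\phi\rangle,\langle q,\psi\rangle)$, which is invariant under translation of $q$ by its (smooth, deterministic) mean. No new analytic ingredients beyond those already used in the proof of Proposition \ref{prop:first_order_decay} are required.
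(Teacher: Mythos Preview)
Your proposal is correct and follows essentially the same approach as the paper: split $q=\mathbb{E}q+\tilde q$, use that the covariance (and hence the principal symbol and local strength) is unchanged under passing to $\tilde q$, apply the zero-mean analysis to the centered part, and dispose of the deterministic $\mathbb{E}q$ contribution via non-stationary phase. The only difference is cosmetic: the paper re-runs the integral computations of Proposition \ref{prop:first_order_decay} with $q-\mathbb{E}q$ in place of $q$ rather than invoking that proposition as a black box, whereas you cite it directly --- your version is slightly more economical.
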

\begin{proof}
Note that
\begin{equation}
\begin{aligned}
\mathbb{E}(\langle q - \mathbb{E}q, e^{i2k_1\theta \cdot y} \rangle & \overline{\langle q - \mathbb{E}q, e^{i2k_2\theta \cdot x} \rangle}) \\
& \quad = \int_{\R^n} \Big( \int_{\R^n} K_q(x,y) e^{-i2 k_1 \theta \cdot(x-y)} \,dy \Big)  e^{-i2 (k_2 - k_1) \theta \cdot x} \, dx.
\end{aligned}
\label{id:startingPOINTnonzero}
\end{equation}
By \eqref{id:expansion}, we have that
\begin{equation}
\begin{aligned}
\mathbb{E}(\langle q - \mathbb{E}q, e^{i2k_1\theta \cdot y} \rangle & \overline{\langle q - \mathbb{E}q, e^{i2k_2\theta \cdot x} \rangle}) \\ &= \int_{\R^n} \mu(x) |2 k_1|^{-m} e^{-i2 (k_2 - k_1) \theta \cdot x} \,dx + \int_D a(x, 2 k_1 \theta) e^{-i2 (k_2 - k_1) \theta \cdot x} \,dx
\end{aligned}\label{eq:principalSYM}
\end{equation}
for $k_1 \geq 1/2$. Furthermore, 
\begin{equation}
\begin{aligned}
\mathbb{E}(\langle q, e^{i2k_1\theta \cdot y} \rangle &  \overline{\langle q, e^{i2k_2\theta \cdot x} \rangle}) \\
&= \mathbb{E}(\langle q - \mathbb{E}q, e^{i2k_1\theta \cdot y} \rangle  \overline{\langle q - \mathbb{E}q, e^{i2k_2\theta \cdot x} \rangle}) + \langle \mathbb{E}q, e^{i2k_1\theta \cdot y} \rangle  \overline{\langle \mathbb{E}q, e^{i2k_2\theta \cdot x} \rangle}
\end{aligned}
\label{id:q=q-Eq+Eq}
\end{equation}
with $\mathbb{E}q$ smooth and compactly supported. By \eqref{eq:principalSYM} and the non-stationary phase principle, we have that \eqref{id:approxCORRELATIONnonzero} holds.

On the other hand, by \eqref{id:startingPOINTnonzero} and \eqref{id:pre-expansion}, we have
\[\mathbb{E}(\langle q - \mathbb{E}q, e^{i2k_1\theta \cdot y} \rangle \overline{\langle q - \mathbb{E}q, e^{i2k_2\theta \cdot x} \rangle}) = \int_{\R^n} c_q(x, 2 k_1 \theta) \chi(x)  e^{-i2 (k_2 - k_1) \theta \cdot x} \, dx\]
with $\chi \in C^\infty_0(\R^n)$ such that $\chi(x) = 1$ for all $x \in D$, which implies, by the non-stationary phase principle, that
\begin{equation}
|\mathbb{E}(\langle q - \mathbb{E}q, e^{i2k_1\theta \cdot y} \rangle \overline{\langle q - \mathbb{E}q, e^{i2k_2\theta \cdot x} \rangle})| \lesssim (1 + k_1)^{-m} (1 + |k_1 - k_2|)^{-N}
\label{es:primeranonzero}
\end{equation}
for all $N \in \N$. By the same kind of considerations, one can proves that
\[\mathbb{E}(\langle q - \mathbb{E}q, e^{i2k_1\theta \cdot y} \rangle \langle q - \mathbb{E}q, e^{i2k_2\theta \cdot x} \rangle) = \int_{\R^n} c_q(x, 2 k_1 \theta) \chi(x)  e^{i2 (k_1 + k_2) \theta \cdot x} \, dx\]
with $\chi \in C^\infty_0(\R^n)$ such that $\chi(x) = 1$ for all $x \in D$, which implies, again by the non-stationary phase principle, that
\begin{equation}
|\mathbb{E}(\langle q - \mathbb{E}q, e^{i2k_1\theta \cdot y} \rangle \langle q - \mathbb{E}q, e^{i2k_2\theta \cdot x} \rangle)| \lesssim (1 + k_1)^{-m} (1 + k_1 + k_2)^{-N}
\label{es:segundanonzero}
\end{equation}
for all $N \in \mathbb{N}$.  Finally, the estimates \eqref{es:correlUk1Uk2}, \eqref{es:correlVk1Vk2} and \eqref{es:correlUk1Vk2} follow from \eqref{es:primeranonzero} and \eqref{es:segundanonzero}.
\end{proof}

Eventually, we prove that \eqref{lim:crossed} holds. To do so, we check that the quasi-orthogonality condition in Theorem \ref{thm:aux_ergodicity} holds:
there exists $\varepsilon > 0$ such that
\begin{equation}
	|\mathbb{E} (k^m \mathbb{E} W_k Z_k (k+r)^m \mathbb{E} W_{k + r} Z_{k + r}) | \lesssim (1 + r)^{-\varepsilon}.
	\label{es:quasi-ortho}
\end{equation}
Indeed, by \eqref{eq:decay_of_Zk} we have
\begin{equation*}
	|\mathbb{E} (k^m \mathbb{E} W_k Z_k (k+r)^m \mathbb{E} W_{k + r} Z_{k + r}) | \lesssim (1 + r)^{-\varepsilon} k^{m/2} (k+r)^{m/2} \mathbb{E} W_k \mathbb{E} W_{k + r}.
\end{equation*}
Since $\mathbb{E}q$ is smooth and compactly supported, we have by the non-stationary phase principle that
\[ k^{m/2} (k+r)^{m/2} \mathbb{E} W_k \mathbb{E} W_{k + r} \lesssim 1,\]
and consequently \eqref{es:quasi-ortho}. Then, by Theorem \ref{thm:aux_ergodicity}, we can ensure that \eqref{lim:crossed} holds.

Summarizing, \eqref{lim:vanishingZk} and \eqref{lim:crossed} imply that \eqref{lim:vanishing} holds, and so does \eqref{id:ergodicity}. On the other hand, by \eqref{id:approxCORRELATIONnonzero} we can conclude that Corollary \ref{cor:first_order_data} also holds in the case that $\mathbb{E}q$ is smooth and compactly supported in $D$. Finally, Theorem \ref{thm:main_theorem_born_approx} follows in the case $\mathbb{E}q \neq 0$ by the same density argument performed in the section \ref{subsec:single}.

\subsubsection{2nd order backscattering} The goal of this section is to prove that Proposition \ref{prop:2nd orderscattering} holds when $\mathbb{E}q$ is smooth and has support in $D$. As in the case $\mathbb{E}q = 0$, it is enough to show that
\begin{align}
& \lim_{K \rightarrow \infty} \frac{1}{K} \int_K^{2K} k^3 |u^\infty_1(k, \theta, -\theta)|^2 \,dk < \infty \quad {\rm and}
\label{lim:|single|nonzero} \\
& \lim_{K \rightarrow \infty} \frac{1}{K} \int_K^{2K} k^3 |u^\infty_2(k, \theta, -\theta)|^2 \,dk = 0
\label{lim:2nd order_nonzero}
\end{align}
hold almost surely. The finiteness of the limit in \eqref{lim:|single|nonzero} is a consequence of \eqref{id:q=q-Eq+Eq}, \eqref{es:primeranonzero} and the non-stationary phase principle. However, showing that \eqref{lim:2nd order_nonzero}, or alternatively
\begin{equation*}
\lim_{K \rightarrow \infty} \frac{1}{K - 1} \int_1^{K} k^3 |u^\infty_2(k, \theta, -\theta)|^2 \,dk = 0,
%\label{lim:2nd order'nonzero}
\end{equation*}
holds almost surely requires a more subtle argument. Fortunately, this is exactly the same as in the case $\mathbb{E}q = 0$, and it reduces to prove that 
\begin{equation}
\lim_{\varepsilon \rightarrow 0} \int_{|z| < R} \int_{|\rho| < R} \mathbb{E} |f_\varepsilon(z, \rho) |^2 \, d\rho\, dz
\label{lim:f_eps}
\end{equation}
exists and is finite, with $f_\varepsilon(z, \rho)$ as in \eqref{def:f_eps}. It is convenient to write
\begin{align*}
f_\varepsilon (z, \rho) =& \rho \mathbf{1}_{[0,\infty)}(\rho)
 \int_{\mathbb{S}^2} (q_\varepsilon - \mathbb{E} q_\varepsilon) \big( 
\frac{z + \rho \omega}{2} \big) (q_\varepsilon - \mathbb{E}
q_\varepsilon) \big( \frac{z - \rho \omega}{2} \big) \,d\sigma(\omega) \\
&+ 2 \rho \mathbf{1}_{[0,\infty)}(\rho) \int_{\mathbb{S}^2} 
(q_\varepsilon - \mathbb{E} q_\varepsilon) \big( \frac{z + \rho \omega}
{2} \big) \mathbb{E} q_\varepsilon \big( \frac{z - \rho \omega}{2} \big) 
\,d\sigma(\omega) \\
&+ \rho \mathbf{1}_{[0,\infty)}(\rho) \int_{\mathbb{S}^2} \mathbb{E} 
q_\varepsilon \big( \frac{z + \rho \omega}{2} \big) \mathbb{E} 
q_\varepsilon \big( \frac{z - \rho \omega}{2} \big) \,d\sigma(\omega).
\end{align*}
Thus,
\[\mathbb{E} |f_\varepsilon(z, \rho) |^2 = \rho^2 \mathbf{1}_{[0,\infty)}(\rho)
 \int_{\mathbb{S}^2} \int_{\mathbb{S}^2} \sum_{j=1}^4 I_j (z, \rho, \omega, \theta) \,d\sigma(\omega) \,d\sigma(\theta) \]
 with
\begin{align*}
I_4 =& \mathbb{E} \Big [ (q_\varepsilon - \mathbb{E} q_\varepsilon) \big( 
\frac{z + \rho \omega}{2} \big) (q_\varepsilon - \mathbb{E}
q_\varepsilon) \big( \frac{z - \rho \omega}{2} \big) (q_\varepsilon - \mathbb{E} q_\varepsilon) \big( 
\frac{z + \rho \theta}{2} \big) (q_\varepsilon - \mathbb{E}
q_\varepsilon) \big( \frac{z - \rho \theta}{2} \big)  \Big],\\
I_3 =& 4 \mathbb{E} \Big [ (q_\varepsilon - \mathbb{E} q_\varepsilon) \big( 
\frac{z + \rho \omega}{2} \big) (q_\varepsilon - \mathbb{E}
q_\varepsilon) \big( \frac{z - \rho \omega}{2} \big) (q_\varepsilon - \mathbb{E} q_\varepsilon) \big( 
\frac{z + \rho \theta}{2} \big)  \Big] \mathbb{E}
q_\varepsilon \big( \frac{z - \rho \theta}{2} \big),\\
I_2 =& 2 \mathbb{E} \Big [ (q_\varepsilon - \mathbb{E} q_\varepsilon) \big( 
\frac{z + \rho \omega}{2} \big) (q_\varepsilon - \mathbb{E}
q_\varepsilon) \big( \frac{z - \rho \omega}{2} \big)  \Big] \mathbb{E} q_\varepsilon \big( \frac{z + \rho \theta}{2} \big) \mathbb{E}
q_\varepsilon \big( \frac{z - \rho \theta}{2} \big)\\
& + 4 \mathbb{E} \Big [ (q_\varepsilon - \mathbb{E} q_\varepsilon) \big( 
\frac{z + \rho \omega}{2} \big) (q_\varepsilon - \mathbb{E}
q_\varepsilon) \big( \frac{z + \rho \theta}{2} \big)  \Big] \mathbb{E} q_\varepsilon \big( \frac{z - \rho \omega}{2} \big) \mathbb{E}
q_\varepsilon \big( \frac{z - \rho \theta}{2} \big),\\
I_1 =& \mathbb{E} q_\varepsilon \big( 
\frac{z + \rho \omega}{2} \big) \mathbb{E} q_\varepsilon \big( \frac{z - \rho \omega}{2} \big) \mathbb{E} q_\varepsilon \big( \frac{z + \rho \theta}{2} \big) \mathbb{E}
q_\varepsilon \big( \frac{z - \rho \theta}{2} \big).
\end{align*}
Applying Isserlis' theorem on the terms $I_3$ and $I_4$, we see that $I_3 = 0$ and $I_4$ can be studied in the same way as we did at the end of the section \ref{subsec:2nd order}. Finally, the terms with $I_2$ and $I_1$ give no problem when showing the existence and finiteness of \eqref{lim:f_eps}  since $\mathbb{E} [ (q_\varepsilon - \mathbb{E} q_\varepsilon) (x) (q_\varepsilon - \mathbb{E}
q_\varepsilon) (y)]$ converges to $K_q (x,y)$ pointwise as $\varepsilon$ vanishes and
\[\big| \mathbb{E} [ (q_\varepsilon - \mathbb{E} q_\varepsilon) (x) (q_\varepsilon - \mathbb{E}
q_\varepsilon) (y)] \big| \lesssim 1 + \big|\log |x - y|\big| \mathbf{1}_{\{ |x - y| < 1 \}} (x, y). \]

\subsubsection{Multiple backscattering} As we argued in the section \ref{subsec:higher_order}, we just needed to show that \eqref{term:full_scattering} and \eqref{term:multiple_scattering} hold almost surely. Note that \eqref{term:full_scattering} follows from \eqref{term:multiple_scattering}, \eqref{lim:|single|nonzero} and \eqref{lim:2nd order_nonzero}. In turn, \eqref{term:multiple_scattering} follows from Lemma \ref{lem:residual_decay}. As discussed in the proof of Theorem \ref{thm:main_theorem_rigorous} for the case $\mathbb{E}q = 0$, we can conclude now this theorem for non-zero-mean potentials.

\appendix
\section{Scaling regimes}
\label{sec:scaling_regimes}

\subsection{Two-scale model of a non-smooth scatterer}
In this paper we have set up a theoretical framework for the inversion of the probabilistic backscattering problem that hopefully helps to inspire stable imaging algorithms. Here, we briefly consider two practical quantities ---the correlation length and the characteristic size of the potential--- and how their multi-scale analysis can be used to estimate the {statistics of error in the reconstruction in terms of these
quantities.} Unlike in the previous sections, 
\emph{our analysis is not rigorous in this appendix} as we apply several approximations often used in multi-scale analysis in physics literature. {In this appendix our aim is to formulate our previous
results in the terminology used in multi-scale analysis. Also, we will also show how our microlocal techniques can be applied also in much more general setting than is done in the main text of the paper when
one adds several scales of orders in the scattering model and makes
certain approximations. Indeed, using those approximations, we can consider 
the multi-scale analysis in arbitrary dimension $n$ and for general potential models.}

{Before we formulate the results, we make a few remarks.}
First, in any practical measurement device the maximum frequency $K$ in the data is bounded, i.e., we can never have infinite precision. In order to estimate how wide frequency band is needed to achieve reasonable accuracy we need to understand effective scales of frequency correlations in the backscattered far field. 
Second, in physical applications one often has some prior information about the correlations within the random scatterer. This information is typically described via so-called correlation length of the random media.
Third, if the potential $q$ has a form $q(x)=\epsilon Q(x)$ and the random field $Q$ is roughly of a constant order, we say that the parameter $\epsilon>0$ is the \emph{characteristic size} of $q$.
The effect of higher order scattering in the Schr\"{o}dinger
equation
\begin{equation*}
	(\Delta + k^2 + \epsilon Q(x)) u(x) = 0
\end{equation*}
becomes negligible (problem is approximately linear) if the characteristic size $\epsilon$ is small compared to other parameters appearing in the system. 

In what follows, we employ scaling regimes that are characterized by three quantities described above: the (effective) length of the \emph{frequency band} $K$, the \emph{correlation length} $\ell$ and the \emph{characteristic size} of the potential $\epsilon$.
Moreover, we consider the following scaling regime with respect to the relative sizes of $K, \ell $ and $\epsilon$: for the reference  wavelength $K_0$, distance of propagation $L_0$ and correlation $\ell_0$ scales we have
\begin{equation}
	\label{eq:scaling_regimes}
	K \gg K_0 \max\left(\left(\frac{\ell}{\ell_0}\right)^{-\beta_1},\left(\frac{L}{L_0}\right)^n\right) \quad {\rm and} \quad  \epsilon \ll  \left(\frac{K}{K_0}\right)^{-\beta_2},
\end{equation}
with some $\beta_1>1$ and $\beta_2> m/2-1$,
where $L = {\rm diam}(D)$ {is the diameter of the domain where the potential $q$ is supported} and $m$ is the order of the covariance operator of $Q$. 
For convenience, we assume below that the reference scales $K_0$ and $\ell_0$ are of constant order can be neglected from the error estimate analysis.

In the scaling regime given in \eqref{eq:scaling_regimes} we show below that the measurement data with
a  {finite frequency band},
\begin{equation}
	\label{eq:approx_measurement_data}
	M_K(\theta, \tau, \omega) =\frac 1 K\int_K^{2K} k^m u^\infty(k,\theta,-\theta) \overline{u^\infty(k+\tau,\theta,-\theta)} dk,
\end{equation}
{is close to}  the ideal data $M(\theta,\tau,\omega)$ in \eqref{eq:intro_measurement_data}. Therefore, the local strength $\mu(x)$ can be {approximately estimated
from $M_K(\theta, \tau, \omega)$.}
 
%The last condition is understood so that $\epsilon\ll
%K^{1-m/2-\delta} $ for some arbitrarily small $\delta>0$. 
At this point, notice carefully that in the specific case $n=m=3$ the higher order scattering can be analysed rigorously (as in the section \ref{sec:inverse_problem}) and we need no prior assumptions regarding the characteristic size $\epsilon$, {that is, the latter inequality in (\ref{eq:scaling_regimes}) is not needed.} 
In the same spirit, the estimates regarding the second order scattering (and therefore for the full non-linear scattering) can be improved by techniques used in the section \ref{subsec:2nd order}. 
{We emphasize that in the case  $n=m=3$} the main results of the paper apply for general potentials and do not require the form assumed below.

Let us now define the two-scale model that we analyse in detail.

\begin{definition}[Two-scale model]\sl
\label{def:one_scale}
Consider a microlocally isotropic random field 
\begin{equation}
	\label{eq:one_scale_model}
	q(x) = \epsilon \sqrt{\mu(x)} Q\left(\frac x\ell\right),
\end{equation}
{where $\mu \in C_0^\infty(D)$ is the local strength} function and $Q$ is a stationary zero-mean microlocally isotropic field of order $m$ such that
\begin{equation*}
	\sigma_p(\xi) = |\xi|^{-m} + a(\xi),
\end{equation*}
where $a\in {\mathcal S}^{-m-1}(\R^n)$, the correlation function $K_Q(z) = \expec \left(Q(x)Q(x-z)\right)$ satisfies $K_Q\in L^1(\R^n)$ and $Q$ has a correlation length of constant order.
\end{definition}
Notice that realizations of the random field $Q$ are not compactly supported whereas the realizations of $q$ are due to the compact support of $\mu$. For orders $m>n-1$, {such that $m\neq n$}, we know according to Proposition \ref{prop:correlation_function} and since $Q$ stationary that
\begin{equation*}
	\expec(Q(x)Q(y)) = K_Q(x-y) = c|x-y|^{m-n} + F_\alpha(x-y),
\end{equation*}
where $c\in \R$ and $F_\alpha$ is smooth (for case $m=n$, the leading term is logarithmic). An example of a Gaussian random process with such asymptotics is given by covariance function
\begin{equation*}
	K_Q(x-y) = \exp\left(-|x-y|^{m-n}\right)
\end{equation*}
for $m>n$. 
For more discussion of random processes of type \eqref{eq:one_scale_model}, consider Example \ref{example:model} and examples given in \cite{HLP,LPS}.

\begin{remark}\sl
In the case $m>n$ the correlation length can be
defined by
%{\bf (NEEDS PRECISION: Fouque et al. \cite[eqs. (5.5)-(5.6) and p. 97]{papa_book})}
\begin{equation}
	\label{eq:correlation_length}
	L_Q(x)=\bigg|\frac {\int_{\R^n} \expec(Q(x)Q(x+y)) dy}{\expec(Q(x)^2)}\bigg|^{1/n}.
\end{equation}
for a random field $Q$ such that $\expec(Q(x)Q(x+\cdot)) \in L^1(\R^n)$.

Since we have
\begin{equation*}
	K_{q}(x,y) = \epsilon^2 \sqrt{\mu(x)\mu(y)} K_Q\left(\frac{x-y}{\ell}\right)
\end{equation*}
and $K_{q}(x,x) = \epsilon^2 \mu(x) K_Q(0)$, we find that
\begin{equation*}
	L_q(x) = \left|\frac{\int_{\R^n} \sqrt{\mu(x+y)} K_Q\left(\frac y\ell\right) dy}{\sqrt{\mu(x)} K_Q(0)} \right|^{\frac 1n}
	= \ell \left|\frac{\sqrt{\mu(x)} \int_{\R^n} K_Q\left(y\right) dy + {\mathcal O}\left(\ell\right)}{\sqrt{\mu(x)} K_Q(0)} \right|^{\frac 1n} = \ell L_Q(x) + {\mathcal O}(\ell^2).
\end{equation*}
Therefore, the correlation length of process $Q_\ell$ is of order $\ell$.
\end{remark}

Consider the covariance operator $C_{Q_\ell}$ of random field $Q(x/\ell)$.
Its symbol $c_{Q_\ell}$ satisfies (in the sense of generalized functions)
\begin{eqnarray*}
	c_{Q_\ell}(x,\xi) & = & \int_{\R^n} K_Q\left(\frac x\ell ,\frac y\ell\right) \exp(-i\xi\cdot(x-y)) dy \\
	& = & \ell^n \int_{\R^n} K_Q\left(\frac x \ell, y'\right) \exp\left(-i\ell\xi \cdot \left(\frac x\ell - y'\right)\right)d y' \\
	& = & \ell^n c_Q(\ell \xi),
\end{eqnarray*}
since $c_Q$ is independent on the spatial variable. Consequently, due to the additional multiplication by $\epsilon \sqrt{\mu(x)}$ the symbol of $C_q$ is of the form
\begin{equation}
	\label{eq:one_scale_model_symbol}
	c_{q}(x,\xi) = \epsilon^2\ell^{n}\left(\mu(x) |\ell\xi|^{-m} + \tilde a(x,\ell\xi)\right),
\end{equation}
where $\tilde a \in {\mathcal S}^{-m-1}(\R^n\times \R^n)$ and $\tilde a$ has a compact support with respect to $x$.

%
%In the following, we consider, roughly speaking, backscattering with large frequencies, where the 
%scattering is caused by very small scale (when compared to the wave length) microstructure, produced by the discontinuities of $q$. We assume that the amplitude of the scatterer is so small that the multiple order scattering is very weak. Then the scattering effects produced by the microstructure scale variations sum up together and give us enough  information to determine $\mu$.
%
%{\bf BACKTRACE:} If $\Phi_\ell$ is a known function, we can determine  the kernel
%$K_{Q_\ell}$. This means that the backscattering 
%determines  the statistics of the potential in the scale of the correlation length of the potential. Moreover,
%the local strength function $\mu$ is determined in a statistical stable way, that is, it is independent on
%the realisation of the random process $p$.
%

\subsection{Inverse scattering with the two-scale model}

{Next we analyze the statistical properties of 
 the measurement data with
a finite frequency band, $M_K(\theta, \tau, \omega)$, given in (\ref{eq:approx_measurement_data}).
Let us start by considering} the correlations for first order scattering from potential model described in Definition \ref{def:one_scale}. By utilizing the asymptotic of the symbol $c_q$ in \eqref{eq:one_scale_model_symbol}, we obtain
\begin{eqnarray}\nonumber 
\mathbb{E}(u_1^ \infty(k, \theta, -\theta) \overline{u_1^ \infty(k+\tau, \theta, -\theta)})
% &=& \int_{\R^n} \Big( \int_{\R^n} K_q(x,y) e^{-i2 k \theta \cdot y} \,dy \Big)  e^{-i2 \tau \theta \cdot x} \, dx\\ \label{correlation}
% &=& \int_{\R^n}    c_q (x, 2 k \theta)  e^{-i2 \tau \theta \cdot x} \, dx\\ \nonumber
 &=& \epsilon^2\ell^{n}\int_{\R^n}    \left(\mu(x) |2k\ell|^{-m} + \tilde a(x,2k\ell\theta)\right)e^{-i2 \tau \theta \cdot x} \, dx 
 \\ 
 &=& \epsilon^2 \ell^{n-m}(2k)^{-m} \left(  \hat \mu(2 \tau \theta) + L{\mathcal O}\left(\frac 1{k\ell}\right)\right),
 \label{correlation}
\end{eqnarray}
where the second term appears {due to the fact that  $\tilde a$ is supported in $D\times \R^2$.}
This identity corresponds to the formula \eqref{id:approxCORRELATION} and is valid in our scaling regime
in equation \eqref{eq:scaling_regimes}.

Using similar technique as in Proposition \ref{prop:first_order_decay} and utilizing non-stationary phase principle (more precisely, see equations \eqref{es:primera} and \eqref{es:segunda}), we obtain inequality
\begin{equation}
	\label{eq:scaling_decay_rates_1}
	\left|\mathbb{E}(u_1^ \infty(k, \theta, -\theta) \overline{u_1^ \infty(k+\tau, \theta, -\theta)})	\right|
	\lesssim \epsilon^2 \ell^{n-m} L^n (1+k)^{-m}(1+\tau)^{-N},
\end{equation}
where $L={\rm diam}(D)$ and $N>0$ is arbitrary and the implicit constant depends on $N$. The bounding constant in non-stationary phase principle also depends on the domain (and the phase, which is here trivial), which we have explicitly included on the right-hand side. Similar deduction also yields the bound
\begin{equation}
	\label{eq:scaling_decay_rates_2}
	\left|\mathbb{E}(u_1^ \infty(k, \theta, -\theta) u_1^ \infty(k+\tau, \theta, -\theta))	\right|
	\lesssim \epsilon^2 \ell^{n-m} L^n (1+k)^{-m}(1+k+\tau)^{-N}.
\end{equation}

%\HOX{The text below and above need to be modified so that they fit together and parameter $\beta_2$ needs to be chosen, too.}

Now let us turn our attention to the convergence of the data (\ref{eq:approx_measurement_data}), i.e., the mean integral over the frequency band and, for a brief moment, consider a real-valued random process $X(k)$ as in Theorem \ref{thm:aux_ergodicity}. It is well-known  that the variance of the mean integral
\begin{equation*}
	Y(K) = \frac{1}{K}\int_K^{2K} (X(k)-\expec X(k)) dk
\end{equation*}
decays at rate ${\mathcal O}(1/K)$, if the covariance of $X$ decays at a polynomial rate
\begin{equation}
	\label{eq:scaling_regimes_aux1}
	\expec (X(k)-\expec X(k)) (X(k+\tau)-\expec X(k+\tau)) \lesssim (1+\tau)^{-N}
\end{equation}
for $N>2$.
Similar arguments carry over to complex valued random processes, when the modulus of $Y(K)$ and decay of the complex covariance are considered.

Notice that due to Corollary \ref{cor:covariances_in_full_first_order_integral} the decay rates in equations \eqref{eq:scaling_decay_rates_1} and \eqref{eq:scaling_decay_rates_2} imply similar decay of covariance for the product
$$Z(k) = k^m u^\infty_1(k,\theta,-\theta)\overline{u^\infty_1(k + \tau, \theta, -\theta)}.$$
The first order backscattered term $u^\infty_1$ is a linear mapping of the potential and {we find that 
\begin{equation}\label{1st order sc.}
	M_K^{(1)}(\theta,\tau,\omega) := \frac{1}{K} \int_{K}^{2K} Z(k)  \, dk  =c_{n,m}\epsilon^2\left( \widehat{\mu} (2\tau\theta) + N_1(\omega)\right),
\end{equation}
where $c_{n,m}\in \R$, $\mu$ is  the local strength of the random field $q$} and the error term $N_1(\omega)$ can be factorized in  random  and deterministic parts
\begin{equation*}
	N_1(\omega) = E(\omega) + F.
\end{equation*}
The error term $E: \Omega \to {\mathbb C}$ represents the random deviation from expected value
\begin{equation*}
	E(\omega) = \frac{1}{\epsilon^2}( M_K - \expec M_K)
\end{equation*}
and is a zero-mean random variable satisfying 
\begin{equation*}
	\expec |E|^2 \lesssim \frac{\ell^{n-m} L^n}{K}.
\end{equation*}
The deterministic part $F$ represents approximation
\begin{equation}\label{F error}
	|F| = \frac{1}{\epsilon^2}\Big| \expec M_K - M(\theta,\tau,\omega)\Big| \lesssim \ell^{n-m}L^n \cdot \frac{\log K}{K\ell}
\end{equation}
due to the Proposition \ref{prop:first_order_decay}. In the particular case of $n=m$, the error in the first order backscattered data is roughly of order (standard deviation of the random error and deterministic error)
\begin{equation}
	\label{eq:scaling_first_order_error_std}
	\expec |N_1|^2 \lesssim \expec |E|^2 + F^2 \lesssim  \frac{L^n}{K} + L^{2n}\left(\frac{\log K}{K\ell}\right)^2.
\end{equation}
{Summarizing, the inequality (\ref {eq:scaling_first_order_error_std}) 
gives an estimate the how well  the first order scattering term $
	M_K^{(1)}(\theta,\tau,\omega)$ in (\ref{1st order sc.}) estimates the local strength $\mu$ of
	the random field $q$. Next we consider the effect of the higher order scattering.}

\subsection{Effects from the higher order scattering}

Our analysis in the section \ref{sec:inverse_problem} regarding the full non-linear backscattering assumes the specific case $n=m=3$. 
Therefore, under general conditions and, in particular, general dimension the data $M_K(\theta,\tau,\omega)$ from full backscattered field does not necessarily converge when $K$ increases. Below we show that in the scaling regime \eqref{eq:scaling_regimes} the smallness of the potential reduces the effect of the higher-order scattering, which is rather straightforward to quantify given the techniques in the proof of Lemma \ref{lem:residual_decay}. However, in addition, we wish to quantify the effect of correlation length, which requires more care.

Let us first consider the effect of correlation length in the model \ref{def:one_scale} and state the following well-known fact: the dilation operator $D_\ell : f \mapsto f\left(\frac \cdot \ell\right)$ is bounded in $L^p_{-s}(\R^n)$ and its norm scales according to
\begin{equation}
	\label{eq:one_scale_dilation_ineq}
	\norm{D_\ell f}_{L^p_{-s}} \lesssim \ell^{\frac np}\norm{f}_{L^p_{-s}} 
\end{equation}
for $\ell\leq 1$ and $s\geq 0$. Same inequality holds as well for $\ell \geq 1$ and $s\leq 0$. The inequality can be shown to hold by first noting that the standard norm of $L^p_{s}(\R^n)$ and $\norm{\cdot}_{L^p(\R^n)} + \norm{(-\Delta)^{s}\cdot}_{L^p(\R^n)}$ are equivalent for $s\geq 0$. Thereafter, one has directly
$(-\Delta)^s (D_\ell u) = \ell^{-2s} D_\ell(\Delta u)$ for any $s\in\N$ and by utilizing interpolation and duality argument, the result follows.

Let $\Psi \in C_0^\infty(\R^n)$ be a smooth function such that $\supp (\Psi) \subset \left[-1 ,1\right]^n$. Moreover, we assume that functions $\Psi_{\vj} := \Psi(\cdot - \vj)$ define a partition of unity, i.e.
\begin{equation}
	\label{eq:one_scale_part_unity_in_D}
	\sum_{\vj\in\Z^n} \Psi_{\vj}(x) = 1 \quad \textrm{for all}\; x\in \R^n.
\end{equation}
Next, let $R(\ell) \in \N$ be the (smallest) number of translations in one dimension that are needed to cover the dilated domain $\frac 1 \ell \cdot D$, i.e.
we assume
\begin{equation}
	\label{eq:one_scale_vj_decomp}
	\sum_{|\vj|\leq R(\ell)} \Psi_{\vj}(x) = 1 \quad \textrm{for all}\; x \; \textrm{such that} \; \ell x \in D.
\end{equation}
Clearly, we have that $R(1) \lesssim {\rm diam}(D)$ and $R(\ell) \lesssim \frac{{\rm diam}(D)} \ell$.

In order to quantify the effect of $\ell$ in the Born series of the data, we need a scaling estimate of $q$ in the norm of $L^p_{-s}(\R^n)$ as well as an operator norm of $f\mapsto qf$.
Since the local strength $\mu\in C_0^\infty(D)$ is bounded, it follows by \eqref{eq:one_scale_dilation_ineq} that
\begin{multline}
	\label{eq:one_scale_norm}
	\norm{\sqrt{\mu}Q\left(\frac \cdot\ell\right)}_{L^p_{-s}} \lesssim \ell^{\frac n p} 
	\norm{\sqrt{\mu}(\ell \cdot )Q}_{L^p_{-s}}
	\lesssim \ell^{\frac n p} \sum_{|\vj|\leq R(\ell)} \norm{\Psi_{\vj}\, Q}_{L^p_{-s}} \\
	\lesssim \ell^{\frac n p-n} \frac{L^n}{R(\ell)^n} \sum_{|\vj|\leq R(\ell)} \norm{\Psi_{\vj}\, Q}_{L^p_{-s}} = G_1(\omega)L \ell^{\left(\frac 1 p-1\right)n}.
\end{multline}
Above, {the random bound} $$G_1(\omega) = \frac 1{R(\ell)^n} \sum_{|\vj|\leq R(\ell)} \norm{\Psi_{\vj}\, Q}_{L^p_{-s}}$$
is bounded almost surely, since the expectation $\expec G_1$ is bounded. Furthermore, $\expec G_1$ is independent of $\ell$, since random variables $\norm{\Psi_{\vj}\, Q}_{L^p_{-s}}$ are identically distributed (although not independent).

Considering the multiplication operator $f\mapsto qf$, we reproduce the ideas of the section \ref{sec:multiplication} by replacing $V$ with $ \sqrt{\mu}Q\left(\frac \cdot\ell\right)$.
By dilation, boundedness of $\mu$ and identity \eqref{eq:one_scale_part_unity_in_D}, we can directly record the following identity
\begin{equation}
	\label{eq:one_scale_aux_dil1}
	\left|\langle \sqrt{\mu}Q\left(\frac \cdot\ell\right) f, g\rangle\right| = \ell^n \left|\langle \sqrt{\mu}(\ell \cdot) Q, f(\ell\cdot) g(\ell \cdot)\rangle\right| \lesssim \ell^n \left|\left\langle \sum_{|\vj|\leq R(\ell)}\Psi_\vj\, Q, f(\ell\cdot) g(\ell \cdot)\right\rangle\right|.
\end{equation}
We write
\begin{equation*}
	 W_\vj = (\id - \Delta)^{-s/2} \left(\Psi_\vj\, Q\right) \quad {\rm and} \quad 
	W = \sum_{|\vj|\leq R(\ell)} W_\vj,	 
\end{equation*}
and define $W^\sharp$ and $W^\flat$ according to the section \ref{sec:multiplication}.
Following the idea in inequality \eqref{eq:multiplication_op_holder_est} and applying the estimate \eqref{eq:one_scale_aux_dil1}, decomposition \eqref{eq:one_scale_vj_decomp} and dilation scaling \eqref{eq:one_scale_dilation_ineq}, we obtain
\begin{multline*}
\left|\langle \sqrt{\mu}Q\left(\frac \cdot\ell\right) f, g\rangle\right| \\
\leq \ell^n\left( \| (\id - \Delta)^{t/2} W^\sharp \|_{L^r}  \| (\id - \Delta)^{(s - t)/2} (f(\ell\cdot) g(\ell \cdot)) \|_{L^{r'}} + \| W^\flat \|_{L^p} \| (\id- \Delta)^{s/2} (f(\ell\cdot) g(\ell \cdot)) \|_{L^{p'}}\right) \\
\lesssim \ell^n\left( \ell^{\frac{n}{r'}} \| (\id - \Delta)^{t/2} W^\sharp \|_{L^r}  \| (\id - \Delta)^{(s - t)/2} (f g) \|_{L^{r'}} + \ell^{\frac{n}{p'}}\| W^\flat \|_{L^p} \| (\id- \Delta)^{s/2} (f g) \|_{L^{p'}}\right),
\end{multline*}
where $s-\frac n p < t < s$, $r = \frac{n}{s-t}$ and $p$ and $r$ are H\"older conjugates of $p'$ and $r'$, respectively. 
Since $\frac n{p'} < \frac n{r'}$, we have that for small $\ell<1$ the term $\ell^{n/p'}$ dominates and
\begin{eqnarray*}
	\left|\langle \sqrt{\mu}Q\left(\frac \cdot\ell\right) f, g\rangle\right| & \lesssim & 
	\ell^{n(2-\frac 1p)} \sum_{|\vj|\leq R(\ell)} \big( C(k) \| W_\vj\|_{L^p} + \| W_\vj^\flat \|_{L^p} \big)  \| f \|_{H^{s,-\delta}_k} \| g \|_{H^{s,-\delta}_k} \\
	& \lesssim & L^n\ell^{n(1-\frac 1p)} \frac 1{R(\ell)^n} \sum_{|\vj|\leq R(\ell)} \big( C(k) \| W_\vj\|_{L^p} + \| W_\vj^\flat \|_{L^p} \big)  \| f \|_{H^{s,-\delta}_k} \| g \|_{H^{s,-\delta}_k} \\
	& \simeq & L^n\ell^{n(1-\frac 1p)} G_2(\omega,\ell)  \| f \|_{H^{s,-\delta}_k} \| g \|_{H^{s,-\delta}_k},
\end{eqnarray*}
where 
\begin{equation*}
	G_2(\omega,\ell) = \frac 1{R(\ell)^n} \sum_{|\vj|\leq R(\ell)} \big( C(k) \| W_\vj\|_{L^p} + \| W_\vj^\flat \|_{L^p} \big)
\end{equation*}
is almost surely finite, since $\expec G_2 < \infty$ and $C(k)$ decays to zero as $k$ increases.
In consequence, we obtain
\begin{equation}
	\label{eq:one_scale_multiplication_estimate}
	\norm{\sqrt{\mu}Q\left(\frac \cdot\ell\right)f}_{H^{-s, \delta}_k} 
	= L^n \ell^{n(1-\frac 1p)} G_2(\omega,\ell)\norm{f}_{H^{s, -\delta}_k}.
\end{equation}

We are finally ready to analyse the effect from higher order scattering in the measurement data.
Let us write $$u^\infty_R(k,\theta,-\theta) = \sum_{j=2}^\infty u_j^\infty(k,\theta,-\theta).$$ Similarly to the proof of Lemma \ref{lem:residual_decay} we can apply the norm bounds obtained in equations \eqref{eq:one_scale_norm} and \eqref{eq:one_scale_multiplication_estimate} and further in the section \ref{sec:direct-scattering} to arrive at
\begin{eqnarray}
	\label{eq:scaling_regimes_aux3}
	\sup_{\theta\in\mathbb{S}^2} \left| u^\infty_R(k,\theta,-\theta)\right| & \leq & \nonumber
	\epsilon \norm{ \sqrt{\mu}Q\left(\frac \cdot\ell\right)}_{L^p_{-s}} \| \exp(i k \theta \cdot y)  \chi \|_{H^s} \|\chi u_0 \|_{H^{s,-\delta}_k} \\
	& & \quad \times \sum_{j > 0} \epsilon^{j}\norm{\mathcal{R}^+_k \circ \left(\sqrt{\mu}Q\left(\frac \cdot\ell\right)\right)}^j_{H^{s,-\delta}_k \rightarrow H^{s,-\delta}_k} \nonumber \\
	& \lesssim & G_1(\omega) L^n \epsilon \ell^{-(1-\frac 1p)n} k^{2s} \sum_{j>0} \left( G_2(\omega,\ell) L^n \epsilon \ell^{(1-\frac 1p)n} k^{-(1-2s)}\right)^j \nonumber \\
	& \leq & G_3(\omega,\ell) L^{2n} \epsilon^2 \frac{k^{-1+4s}}{1-G_2(\omega,\ell)L^n \epsilon \ell^{(1-\frac 1p)n} k^{-1+2s}} \nonumber\\
	& \leq & G_4(\omega,\ell) L^{2n}\epsilon^2 k^{-1+4s},
\end{eqnarray}
where expectations of random coefficients $G_j$, $j=1,...,4,$ are uniformly bounded by a constant independent of $\ell$ and $s>0$ is arbitrarily small. Therefore, for $k$ large enough so that $1-G_2(\omega,\ell)L^n\epsilon \ell^{(1-\frac 1p)n} k^{-1+2s}>\frac 12$, we have an upper bound
\begin{eqnarray*}
	\frac{1}{K} \int_{K}^{2K} k^m \left| u^\infty_R(k, \theta, -\theta)\right|^2  \, dk  & \lesssim & \frac{G_4(\omega,\ell)L^{4n}\epsilon^4 }{K}\int_{K}^{2K} k^{m} k^{-2+\delta} dk \\
	& \simeq & G_4(\omega,\ell)L^{4n}\epsilon^4  K^{m-2+\delta}
\end{eqnarray*}
for some small $\delta=4s>0$.

By combining the two approximations and {formulas (\ref{1st order sc.}) and
(\ref {eq:scaling_first_order_error_std}),
we see using the Cauchy--Schwarz inequality that
the finite frequency band $	M_K(\theta, \tau, \omega) $ given in (\ref{eq:approx_measurement_data}) satisfies}
\begin{equation}\label{MK estimate}
	M_K(\theta,\tau,\omega) =
	  	\epsilon^2 (\widehat{\mu} (2\tau\theta) + N_1(\omega) + N_2(\omega))
\end{equation}
for
\begin{equation*}
	N_2(\omega) = G_4(\omega,\ell)L^{4n}\epsilon^2 K^{m-2+\delta}.
\end{equation*}
In fact, since $G_4$ can be considered to be of constant order we have that the full error in $M_K(\theta,\tau,\omega)$ (when scaled by $\epsilon^2$) in the case $n=m$ is of order
\begin{equation}\label{N1N2 estimate}
	\expec |N_1 + N_2|^2 \lesssim  \frac{L^n} K + L^{2n} \left(\frac{\log K}{K\ell}\right)^2
	+ L^{4n} \epsilon^2  K^{n-2+\delta}.
\end{equation}
where $\delta>0$ is arbitrarily small. {Thus we can summarise the main result of this appendix:
Under the assumptions on scaling regimes \eqref{eq:scaling_regimes},
the finite frequency band measurement $M_K(\theta, \tau, \omega) $, given in (\ref{eq:approx_measurement_data}),
determines the Fourier transform of local strength $\mu(x)$ by the formula
\begin{equation}\label{MK estimate 2}
	\widehat{\mu} (2\tau\theta) 
		=	\epsilon^{-2}M_K(\theta,\tau,\omega)+\mathcal E,\quad \mathcal E=-\epsilon^{-2}(N_1(\omega) + N_2(\omega))
\end{equation}
where the error  $\mathcal E$ can be estimated using the formula (\ref{N1N2 estimate}). In the scaling regime \eqref{eq:scaling_regimes}
 the error term $\mathcal E$ is negligible with high probability. In consequence, we have means to approximate the Fourier transform of $\mu$ at the given frequency $2\tau$. Note that as $\mu\in C^\infty_0(D)$, the 
 Fourier transform of $\mu$  decays rapidly, and therefore the formula  (\ref{MK estimate 2}) and the error estimate
  (\ref{N1N2 estimate}) are useful for small values of $\tau$, e.g. $|\tau|\ll K(L_0/L)^n$.}

\section{Random variables with Gaussian probability laws}\label{sec:gaussians}
Let $X$ be a random variable, we say that it has a Gaussian law with mean $\mu$ and variance $\sigma^2$, if its law $\mathbb{P}_X$ satisfies:
\[\mathbb{P}_X \big( (-\infty, a] \big) = \frac{1}{\sqrt{2\pi \sigma^2}} \int_{-\infty}^a e^{-\frac{(x - \mu)^2}{2\sigma^2}} \, dx, \qquad \forall a \in \R.\]

\begin{lemma}\label{lem:evenMOMENTUM} \sl Let $X$ be a Gaussian random variable with mean $0$. Then, there exists a constant $c_k > 0$ such that
\[(\mathbb{E} X^{2k})^\frac{1}{2k} = c_k (\mathbb{E} X^2)^\frac{1}{2}, \qquad \forall k\in \N \setminus \{ 0 \}.\]
\end{lemma}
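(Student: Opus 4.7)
The plan is to compute both moments explicitly and show that the ratio depends only on $k$.

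First, I would set $\sigma^2 = \mathbb{E} X^2$ and write the Gaussian density so that
\[
\mathbb{E} X^{2k} = \frac{1}{\sqrt{2\pi \sigma^2}} \int_{\R} x^{2k} e^{-x^2/(2\sigma^2)} \, dx.
\]
Then I would perform the change of variables $y = x/\sigma$, which factors out a power of $\sigma$ and leaves behind an integral depending only on $k$:
\[
\mathbb{E} X^{2k} = \sigma^{2k} \cdot \frac{1}{\sqrt{2\pi}} \int_{\R} y^{2k} e^{-y^2/2} \, dy = \sigma^{2k} \, d_k,
\]
where $d_k > 0$ is a purely combinatorial constant (in fact, integration by parts or the standard moment-generating-function computation identifies $d_k = (2k-1)!! = (2k)!/(2^k k!)$, but the precise value is irrelevant here; only its positivity and its independence of $\sigma$ matter).

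Taking the $2k$-th root of the identity $\mathbb{E} X^{2k} = d_k \sigma^{2k}$ gives
\[
(\mathbb{E} X^{2k})^{1/(2k)} = d_k^{1/(2k)} \, \sigma = c_k \, (\mathbb{E} X^2)^{1/2},
\]
with $c_k := d_k^{1/(2k)} > 0$, which is the claimed identity. The degenerate case $\sigma = 0$ (i.e., $X = 0$ almost surely) is trivial since both sides vanish.

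There is no real obstacle here; the proof is essentially the rescaling argument above. The only thing to be slightly careful about is justifying the change of variables when $\sigma = 0$, which is handled by the trivial case, and checking that $d_k$ is strictly positive (which is obvious since the integrand $y^{2k} e^{-y^2/2}$ is non-negative and not identically zero).
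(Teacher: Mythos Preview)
Your proof is correct and follows essentially the same approach as the paper: the paper invokes homogeneity to reduce to the unit-variance case (which is your change of variables $y=x/\sigma$) and then uses integration by parts to identify the resulting constant, exactly as you note in passing. The only cosmetic difference is that the paper performs the reduction implicitly and the integration by parts explicitly, while you do the reverse.
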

\begin{proof}
By homogeneity, one can assume that the variance be $\mathbb{E} X^2 = 1$. Integrating by parts,
\begin{equation*}
\mathbb{E} X^{2k} = \int_\R x^{2k} e^{-\frac{x^2}{2}} \, dx = - \int_\R x^{2k -1} \frac{d}{dx} \Big( e^{-\frac{x^2}{2}} \Big) \, dx = (2k - 1) \int_\R x^{2(k - 1)} e^{-\frac{x^2}{2}} \, dx.
\end{equation*}
Iterating the process we see that $\mathbb{E} X^{2k} = c'_k $ for some constant which only depends on $k$. This concludes the proof of this lemma.
\end{proof}

\begin{proposition} \sl The linear combination of two independent Gaussian random variables has a Gaussian law.
\end{proposition}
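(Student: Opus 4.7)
My plan is to prove the statement via characteristic functions, which is the cleanest route and bypasses the need to compute a convolution integral and complete the square. Let $X$ and $Y$ be independent Gaussians with means $\mu_X,\mu_Y$ and variances $\sigma_X^2,\sigma_Y^2$, and let $a,b\in\R$; I want to show that $Z=aX+bY$ is Gaussian with mean $a\mu_X+b\mu_Y$ and variance $a^2\sigma_X^2+b^2\sigma_Y^2$.

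First, I would recall (or record as a preliminary lemma) the standard formula $\mathbb{E}\,e^{itX}=e^{it\mu_X-t^2\sigma_X^2/2}$ for a Gaussian $X$; this is a one-line computation completing the square in the defining integral. By the independence of $X$ and $Y$, the joint law factors and hence
\[
\mathbb{E}\,e^{itZ}=\mathbb{E}\,e^{ita X}\,\mathbb{E}\,e^{itb Y}=e^{it(a\mu_X+b\mu_Y)}\,e^{-t^2(a^2\sigma_X^2+b^2\sigma_Y^2)/2},
\]
which is precisely the characteristic function of a Gaussian with the claimed mean and variance. The uniqueness theorem for characteristic functions then forces $Z$ to have this Gaussian law.

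Two small technical points deserve attention rather than actual difficulty. First, if either $a\sigma_X$ or $b\sigma_Y$ vanishes, the resulting variance could be zero; in that degenerate case the formula still identifies $Z$ with a Dirac measure, which is the standard convention for a Gaussian of variance zero, so no special case analysis is strictly required provided one adopts this convention at the outset. Second, the scaling $\mathrm{Var}(aX)=a^2\sigma_X^2$ and $\mathbb{E}(aX)=a\mu_X$ should be stated explicitly so the final formula is unambiguous. With these remarks in place the proof is essentially three lines and I would present it as such; the only conceptual step is invoking the uniqueness of the Fourier transform of a probability measure, which is standard and can be cited rather than reproved.
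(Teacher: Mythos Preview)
Your proof via characteristic functions is correct and is the standard textbook argument. Note that the paper actually states this proposition without proof (it is a well-known fact recorded in the appendix for reference), so there is no approach to compare against; your three-line argument, together with the remark on the degenerate zero-variance case, is exactly what one would expect here.
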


Given two random variables $X_1$ and $X_2$, the pair $X = (X_1, X_2)$ is said to be a Gaussian random vector if $r_1X_1 + r_2X_2$ has a Gaussian law for any $r_1, r_2 \in \R$. If $X$ is a Gaussian random vector, its law is determined by the vector $(\mathbb{E}X_1, \mathbb{E} X_2)$ and the matrix
\[
\left(
\begin{array}{c c}
\mathrm{Cov} (X_1, X_1) & \mathrm{Cov} (X_1, X_2) \\
\mathrm{Cov} (X_2, X_1) & \mathrm{Cov} (X_2, X_2)
\end{array}
\right).
\]

\begin{remark} \rm
\begin{itemize}
\item[(i)] If $X = (X_1, X_2)$ is a Gaussian random vector, then $X_1$ and $X_2$ are Gaussian random variables.

\item[(ii)] If $X_1$ and $X_2$ are independent Gaussian random variables, then $X=(X_1, X_2)$ is a Gaussian random vector.

\item[(iii)] The pair $X = (X_1, X_2)$ of two Gaussian random variables $X_1$ and $X_2$ is not in general a Gaussian vector. To see this, is enough to consider a Gaussian random variable $X$ with mean $0$ and variance $1$ and the random variable $X_c$ defined as $X_c(\omega) = X(\omega)$ if $X(\omega) > c$ and $X_c(\omega) = - X(\omega)$ if $X(\omega) \leq c$. One can check that $X_c$ has the same law than $X$. However, $X - X_c$ does not have a Gaussian law since $\mathbb{P}_{X - X_c} (\{ 0 \}) > 0$.

\item[(iv)] Two Gaussian random variables $X_1$ and $X_2$ may be uncorrelated (i.e. $\mathrm{Cov} (X_1, X_2) = 0$) and not be independent. This can be verified using the same $X$ and $X_c$ of (iii), which are not independent. To do so, note that the function $c \mapsto \mathbb{E}(X X_c)$ is continuous and
\[\lim_{c \to \mp \infty} \mathbb{E} (X X_c) = \pm 1.\]
Thus, there exists a $c_0$ such that $\mathbb{E} (X X_{c_0}) = 0$ and the random variables $X$ and $X_{c_0}$ are uncorrelated. 

\item[(v)] Let $X_1, X_2$ and $X_2'$ be three Gaussian random variables equally distributed and assume that $\mathbb{E}(X_1 X_2) = \mathbb{E}(X_1 X'_2)$. In general, it is not true that $(X_1, X_2)$ and $(X_1, X'_2)$ are equally distributed. We show that using the example in (iii). Let $X'$ be a Gaussian random variable with mean $0$ and variance $1$ and assume it to be independent of $X$. Then, the vectors $(X, X_{c_0})$ and $(X, X')$ do not have the same law, since $(X, X')$ is Gaussian and $(X, X_{c_0})$ is not.
\end{itemize}
\end{remark}

\bibliographystyle{abbrv}

\bibliography{citations}

\end{document}